\numberwithin{equation}{section}
\def\<{\langle}
\def\>{\rangle}
\newcommand{\be}{\begin{equation}}
\newcommand{\ee}{\end{equation}}
\newcommand{\bea}{\begin{eqnarray}}
\newcommand{\eea}{\end{eqnarray}}
\newcommand{\beas}{\begin{eqnarray*}}
\newcommand{\eeas}{\end{eqnarray*}}
\theoremstyle{plain}
\newtheorem{theorem}{Theorem}[section]
\newtheorem{lemma}[theorem]{Lemma}
\newtheorem{proposition}[theorem]{Proposition}
\newtheorem{corollary}[theorem]{Corollary}
\newtheorem{definition}[theorem]{Definition}
\theoremstyle{definition}
\newtheorem{remark}[theorem]{Remark}
\newtheorem*{notation}{Notation}
\newcommand{\tnorm}[1]{\left\vert\kern-0.25ex\left\vert\kern-0.25ex\left\vert #1 
    \right\vert\kern-0.25ex\right\vert\kern-0.25ex\right\vert}
\newcommand{\numberthis}{\addtocounter{equation}{1}\tag{\theequation}}
\def\\ud sum{\displaystyle\sum}
\def\Prod{\displaystyle\prod}
\numberwithin{equation}{section}
\numberwithin{table}{section}
\numberwithin{figure}{section}
\def\ue{\mathrm e}
\def\ud{\mathrm d}
\setlist[enumerate]{label=(\roman*),align=left, leftmargin=*}
\crefname{subsection}{subsection}{subsections}
\title{Stability of backward propagation of chaos}
\author[A. Papapantoleon]{Antonis Papapantoleon}
\author[A. Saplaouras]{Alexandros Saplaouras}
\author[S. Theodorakopoulos]{Stefanos Theodorakopoulos}
\address{Delft Institute of Applied Mathematics, EEMCS, TU Delft, 2628 Delft, The Netherlands \& Department of Mathematics, School of Applied Mathematical and Physical Sciences, National Technical University of Athens, 15780 Zografou, Greece \& Institute of Applied and Computational Mathematics, FORTH, 70013 Heraklion, Greece}
\email{a.papapantoleon@tudelft.nl}
\address{Department of Mathematics, ETH Zurich, 8092 Zurich, Switzerland}
\email{alexandros.saplaouras@math.ethz.ch}
\address{Institut f\"{u}r Mathematik, Technische Universit\"{a}t Berlin, 10623 Berlin, Germany}
\email{stefanos.theodorakopoulos@tu-berlin.de}
\thanks{AS gratefully acknowledges the financial support from the Hellenic Foundation for Research and Innovation Grant No. 235 (2nd Call for H.F.R.I. Research Projects to support Post-Doctoral Researchers).
ST gratefully acknowledges the financial support from the Hellenic Foundation for Research and Innovation Grant No. 05724 (3rd Call for H.F.R.I. Scholarships for PhD candidates).}
\keywords{Backward stochastic differential equations, mean-field interactions, McKean--Vlasov interactions, backward propagation of chaos, stability.}  
\subjclass[2020]{60F17, 60H20, 60G42, 60G44, 60G51}
\date{}
\begin{document}

\begin{abstract}
The purpose of the present paper is to introduce and establish a notion of stability for the backward propagation of chaos with respect to (initial) data sets. 
Consider, for example, a sequence of discrete-time martingales converging to a continuous-time limit, and a system of mean-field BSDEs that satisfies the backward propagation of chaos, \textit{i.e.} converges to a sequence of i.i.d. McKean--Vlasov BSDEs.
Then, we say that the backward propagation of chaos is stable if the system of mean-field BSDEs driven by the discrete-time martingales converges to the sequence of McKean--Vlasov BSDEs driven by the continuous-time limit.
We consider the convergence scheme of the backward propagation of chaos as the image of the corresponding data set under which this scheme is established. 
Then, using an appropriate notion of convergence for data sets, we are able to show a variety of continuity properties for this functional point of view.
Along the way, we also provide stability results for mean-field and McKean--Vlasov BSDEs, which are of interest in their own right, for numerical approximations of these equations.
\end{abstract}

\maketitle\frenchspacing



\section{Introduction}

The backward propagation of chaos states that, under appropriate conditions, the solution of a mean-field system of backward stochastic differential equations (BSDEs) with $N$ players (or particles) converges to the solutions of $N$ independent and identically distributed McKean--Vlasov BSDEs, as the number of players $N$ goes to infinity. 
The propagation of chaos property has been extensively studied for forward stochastic differential equations (SDEs), see \textit{e.g.} the review articles of \citet{chaintron2022propagation,Chaintron_Diez_2022}, while only a handful of papers have been published so far on the backward propagation of chaos; see \citet{Buckdahn_Djehiche_Li_Peng_2009,Hu_Ren_Yang_2023,djehiche2021propagation,buckdahn2009mean, LiDu2024, BayraktarWuZhang2023,lauriere2022backward} and also the recent paper \citet{PST_BackPropagChaos} that contains a more detailed review of this literature.
Every such phenomenon is, of course, associated with a set of data $\mathscr{D}$ that provides the basis for its mathematical description. 
Consider a well-posed equation and assume that the sequence of data $\{\mathscr{D}^k\}_{k \in \mathbb{N}}$ converges to the data $\mathscr{D}^{\infty}$; then, if the corresponding solutions also converge, we say that the equation is stable. 
Naturally, the framework to answer this kind of questions should determine a lot of the technical details; for example, in what sense do the data converge, how do we measure the distance of the solutions, and so on. 
In the theory of BSDEs, frameworks for the stability of BSDEs are provided by \citet{hu1997stability} in the special case of a constant filtration, by \citet{briand2001donsker,briand2002robustness} for Brownian drivers, and more recently by \citet{papapantoleon2023stability} where a very general framework is established.
The aim of the present paper is to introduce and establish a stability result for the backward propagation of chaos. 
Along the way, we will also introduce and establish stability results for systems of mean-field BSDEs as well as for McKean--Vlasov BSDEs.

In order to stimulate the interest of the reader with a simple example, let us consider the framework for Donsker's theorem.
More specifically, we associate with each player an independent sequence of symmetric random walks converging to a Brownian motion. 
When these symmetric random walks are sufficiently close to their corresponding Brownian motion, and when the number of players is sufficiently large, the stability of backward propagation of chaos ensures that the solution of the mean-field system of BSDEs driven by the symmetric random walks approximates the solution of the McKean--Vlasov BSDEs driven by the Brownian motions.

The main motivation for this work comes from the numerical solution of mean-field control problems or mean-field equilibria, which can often be recast as mean-field or McKean--Vlasov (F)BSDEs; see, for example, \citet{Chassagneux2022,10.1214/18-AAP1429} or \citet{Warin_etal_2022} for numerical schemes and also \citet{Bensoussan2013,Carmona2012} and \citet{Carmona_Delarue_2018_I,Carmona_Delarue_2018_II} for the theoretical background.
In these situations, our results offer general convergence results for discretization schemes at three levels: for the mean-field systems of BSDEs, for the associated McKean--Vlasov BSDEs and, in a unified step, from the discretized mean-field BSDEs to the associated limiting McKean--Vlasov BSDEs.
Hence, a variety of numerical schemes can be immediately applied on the basis of these results, while the study of convergence rates for these schemes is a logical next step; see also the recent article of \citet{djehiche2025convergenceraterandomwalk}.



In this work, we will combine the frameworks of \citet{PST_BackPropagChaos} and \citet{papapantoleon2023stability} in order to study the stability of the backward propagation of chaos, in a general setting that includes BSDEs driven by either discrete-time or continuous-time martingales with independent increments. 
Let us describe what we mean by that. 
Set $\overline{\mathbb{N}} := \mathbb{N} \cup \{\infty\}$ and assume a sequence of standard data 
$\{\mathscr{D}^k\}_{k \in \overline{\mathbb{N}}}$, where $\mathscr{D}^k := \Big(\{\overline{X}^{k,i}\}_{i \in \mathbb{N}},T^k,\left\{\{\xi^{k,i,N}\}_{i \in \mathscr{N}}\right\}_{N \in \mathbb{N}},\{\xi^{k,i}\}_{i \in \mathbb{N}},\Theta^k,\Gamma,f^k \Big)$ as defined in \cite[Definition~4.2]{PST_BackPropagChaos}, \emph{i.e.}, the set of Assumptions \ref{H1}--\ref{H:prop_contraction} (presented in \cref{sec:Setting}) are set in force.   
Let $(k,N) \in \overline{\mathbb{N}} \times \mathbb{N}$ and $i = 1,\dots,N,$ then the following system of mean-field BSDEs under the standard data $\mathscr{D}^k$,
for $t\in[0,T^k]$,
\begin{align}
\begin{multlined}[0.9\textwidth]
Y^{k,i,N}_t = \xi^{k,i,N} + \int^{T^k}_{t}f^k \left(s,Y^{k,i,N}_s,Z^{k,i,N}_s  c^k_s,\Gamma^{(\mathbb{F}^{k,(1,\dots,N)},\overline{X}^{k,i},\Theta^k)}(U^{k,i,N})_s,L^N(\textbf{Y}^{k,N}_s) \right) \, \ud C^{k}_s\\ 
- \int^{T^k}_{t}Z^{k,i,N}_s \,  \ud X^{k,i,\circ}_s - \int^{T^k}_{t}\int_{\mathbb{R}^n}U^{k,i,N}_s(x) \, \widetilde{\mu}^{(\mathbb{F}^{k,(1,\dots,N)},X^{k,i,\natural})}(\ud s,\ud x) - \int^{T^k}_{t} \,\ud M^{k,i,N}_s,  
\label{mfBSDE}
\end{multlined}
\end{align}
is well-defined and has the unique solution $\textbf{S}^{k,N} : = \left(\textbf{Y}^{k,N},\textbf{Z}^{k,N},\textbf{U}^{k,N},\textbf{M}^{k,N}\right)$, where in particular
$\textbf{Y}^{k,N} := \left(Y^{k,1,N},\dots,Y^{k,N,N}\right)$; we use analogous notation for $\textbf{Z}^{k,N}$, $\textbf{U}^{k,N}$ and $\textbf{M}^{k,N}$.
Here, $N$ denotes the total number of players in the mean-field system and $i$ denotes one of these players, while $L^N$ denotes the empirical measure associated to $\textbf{Y}^{k,N}_t$, \textit{i.e.}
\[
    L^N(\textbf{Y}^{k,N}_t):= \frac1N \sum_{i=1}^N \delta_{Y^{k,i,N}_t}.
\]
The notation will be clarified in the next section, however let us quickly state the following:
$\xi^{k,i,N}$ is the terminal condition of the BSDE for the $i$-th player in the $N$-th game for the $k$-th data set, 
$\overline{X}^{k,i}=(X^{k,i,\circ},X^{k,i,\natural})$ is a tuple of square-integrable martingales (where one could think of them as the continuous and purely discontinuous part of a martingale) with quadratic covariation $C^{\overline{X}^{k,i}}$, 
$M^{k,i,N}$ is the orthogonal martingale with respect to $\overline{X}^{k,i}$,
$Z^{k,i,N}$ is the integrand of the It\=o integral with respect to $X^{k,i,\circ}$,
$U^{k,i,N}$ is the integrand of the stochastic integral with respect to the random measure associated with $X^{k,i,\natural}$,
and $\Gamma$ is a suitable function of $U^{k,i,N}$ chosen from a large class.
In particular, the $\Gamma$ function can be chosen as the Radon--Nikodym derivative of the stochastic integral of $U^{k,i,N}$ with respect to the jump measure relative to the measure induced by $C^{\overline{X}^{k,i}}$, for a range of free parameters $\Theta$; see \Cref{sec:gamma}.

In addition, for $(k,i) \in \overline{\mathbb{N}} \times \mathbb{N}$, the following McKean--Vlasov BSDE under the standard data $\mathscr{D}^k$, for $t\in[0,T^k]$,
\begin{align}
\begin{multlined}[0.9\textwidth]
Y^{k,i}_t = \xi^{k,i} + \int^{T^k}_{t}f^k\left(s,Y^{k,i}_s,Z^{k,i}_s  c^{k}_s,\Gamma^{(\mathbb{F}^{k,i},\overline{X}^{k,i},\Theta^k)}(U^{k,i})_s,\mathcal{L}(Y^{k,i}_s)\right) \, \ud C^{k}_s\\
- \int^{T^k}_{t}Z^{k,i}_s \,  \ud X^{k,i,\circ} - \int^{T^k}_{t}\int_{\mathbb{R}^n}U^{k,i}_s \, \widetilde{\mu}^{(\mathbb{F}^{k,i},X^{k,i,\natural})}(\ud s,\ud x) - \int^{T^k}_{t} \,\ud M^{k,i}_s,
\label{MVBSDE}
\end{multlined}
\end{align}
is well-defined and has the unique solution $(Y^{k,i},Z^{k,i},U^{k,i},M^{k,i})$, where $\mathcal{L}(Y^{k,i}_t)$ denotes the law of $Y^{k,i}_t$.

We have shown in \citet[Theorem 6.3]{PST_BackPropagChaos} that the system of mean-field BSDEs satisfies a backward propagation of chaos property, \textit{i.e.} we know that for every $(k,i) \in \overline{\mathbb{N}} \times \mathbb{N}$ we have 
\begin{align}
   \big\|\big(Y^{k,i,N} - Y^{k,i},Z^{k,i,N} - Z^{k,i},U^{k,i,N} - U^{k,i},M^{k,i,N} - M^{k,i}\big) \big\|
   \xrightarrow[N \rightarrow \infty]{} 0,
\end{align}
under some suitable norm.
Our goal now is to show that the property of backward propagation of chaos is also stable, namely, we want to show that for every $i \in \mathbb{N}$ holds
\begin{multline*}
    \left(Y^{k,i,N}, Z^{k,i,N} \cdot X^{k,i,\circ} + U^{k,i,N} \star \widetilde{\mu}^{X^{k,i,\natural}},M^{k,i,N}\right)\\  \xrightarrow[(k,N)
      \rightarrow (\infty,\infty)]{
      } \left(Y^{\infty,i}, Z^{\infty,i} \cdot X^{\infty,i,\circ} + U^{\infty,i} \star \widetilde{\mu}^{X^{\infty,i,\natural}},0\right),
\end{multline*}
under some appropriate metric.
To the best of our knowledge, this will be the first result of this kind. 
In addition, we will also provide stability results for the systems of mean-field BSDEs in \eqref{mfBSDE} and for McKean--Vlasov BSDEs in \eqref{MVBSDE}.


In order to accommodate the reader, we provide in \cref{table:propagation scheme} a roadmap of our approach for the proof of the stability of the backward propagation of chaos. 
Analogous roadmaps for the other stability results are provided in the respective sections with the proofs; see \cref{table:McKean--Vlasovscheme,table:mean-fieldscheme}.
%
Using this table, we will describe the implications of the setting inherited from \citet{PST_BackPropagChaos}, \emph{i.e.}, the set of Assumptions \ref{H1}--\ref{H:prop_contraction} presented in \cref{sec:Setting}, as well as the forthcoming results we intend to prove.
The validity, for every $k\in\overline{\mathbb{N}}$, of the backward propagation of chaos is denoted by a solid, horizontal right arrow in \cref{table:propagation scheme}. 
The set of Assumptions \ref{S1}--\ref{S10}, which is presented in \cref{sec:Setting} and amounts to the convergence of the sequence of standard data, is denoted by the solid, vertical down arrow in the first column of the table.
Now, we claim that the framework we have already set allows for the validity, on the one hand, of the stability of McKean--Vlasov BSDEs and, on the other hand, of the uniform (over $k\in\overline{\mathbb{N}}$) backward propagation of chaos.
The former is denoted by the dashed, vertical down arrow in the last column of the table.
The latter was chosen not to be represented in \cref{table:propagation scheme} in order to keep the table as simple as possible.
The conjunction of the two aforementioned results yields the convergence of the doubly indexed sequence of solutions of mean-field BSDEs $\{\mathscr{S}^{k,i,N}\}_{(k,N)\in\mathbb{N}\times \mathbb{N}}$ to the solution of McKean--Vlasov BSDEs $\mathscr{S}^{\infty,i}$, for every $i\in\mathbb{N}$; this convergence is denoted by the wiggly, diagonal arrow.
%
%
%
%
%
\begin{table}[ht]
{\centering 
\begin{tabular}{c|ccccccc}
$\mathscr{D}^1$ 	&{$\mathscr{S}^{1,i,i}$} 			&{$\mathscr{S}^{1,i,i + 1}$} 			&{$\mathscr{S}^{1,i,i + 2}$}		
 					&{$\cdots$}							&{$\mathscr{S}^{1,i,N}$}			&{$\xrightarrow{\ N\to\infty\ }$} 	&{$\mathscr{S}^{1,i}$}\\[0.3cm]
$\mathscr{D}^2$ 	&{$\mathscr{S}^{2,i,i}$} 			&{$\mathscr{S}^{2,i,i+1}$} 			&{$\mathscr{S}^{2,i,i+2}$}
 					&{$\cdots$}							&{$\mathscr{S}^{2,i,N}$}			&{$\xrightarrow{\ N\to\infty\ }$}	&{$\mathscr{S}^{2,i}$}\\[0.3cm]
$\mathscr{D}^3$ 	&{$\mathscr{S}^{3,i,i}$} 			&{$\mathscr{S}^{3,i,i+1}$} 			&{$\mathscr{S}^{3,i,i+2}$}
					&{$\cdots$}							&{$\mathscr{S}^{3,i,N}$}			&{$\xrightarrow{\ N\to\infty\ }$}	&{$\mathscr{S}^{3,i}$}\\[0.1cm]
$\vdots$			&{$\vdots$} 						&{$\vdots$} 						&{$\vdots$} 						& 
					&{$\vdots$}							& 												&{$\vdots$} \\[0.2cm]
$\mathscr{D}^k$ 	&{$\mathscr{S}^{k,i,i}$} 			&{$\mathscr{S}^{k,i,i+1}$} 			&{$\mathscr{S}^{k,i,i+2}$}			&{$\cdots$}
					&{$\mathscr{S}^{k,i,N}$}			&{$\xrightarrow{\ N\to\infty\ }$}	&{$\mathscr{S}^{k,i}$}\\
$\big\downarrow$	
&{$ $} 	&{$ $}  	&{$ $}  	&  		
					&{$ $}  	&	\rotatebox[origin=c]{-45}{\huge${\rightsquigarrow}$}	& \rotatebox[origin=c]{270}{$\dashrightarrow{}$} \\[0.2cm]
$\mathscr{D}^\infty$&{$\mathscr{S}^{\infty,i,i}$} 		&{$\mathscr{S}^{\infty,i,i+1}$} 		&{$\mathscr{S}^{\infty,i,i+2}$}		&{$\cdots$}
					&{$\mathscr{S}^{\infty,i,N}$}		&{$\xrightarrow{\ N\to\infty\ }$}	&{$\mathscr{S}^{\infty,i}$}\\[0.2cm]
\end{tabular}\\}
\caption{The doubly-indexed scheme for the stability of backward propagation of chaos.}
\label{table:propagation scheme}
\end{table}

In order to prove the stability of backward propagation of chaos, we first show in \cref{subsec:aux_PropagChaos_Stability} that, based on results from \citet{PST_BackPropagChaos}, the propagation of chaos property holds uniformly with respect to the date $\mathscr{D}^k$. 
Next, by extending the stability results of \citet{papapantoleon2023stability} to the McKean--Vlasov BSDEs, we can immediately deduce the desired result. 
At this point it is important to note that the stability of the mean-field systems of BSDEs was neither needed nor is implied from the stability of the propagation of chaos. 
That is the reason why there are no vertical arrows for the main body of the table.
The stability result for mean-field BSDEs is proved in addition, under slightly different conditions to the other two stability results.

This paper is organized as follows:
In \cref{sec:preliminaries}, we provide notation and preliminary results that are required for the remainder of this work.
In \cref{sec:Setting}, we provide the setting so that we can prove the main results on stability for McKean--Vlasov BSDEs and the backward propagation of chaos.
In \cref{sec:StabilityProperties}, we provide the exact statements as well as the outline of their proofs.
Finally, in \cref{sec:proofs} we provide the remaining proofs and other auxiliary results.


\section{Preliminaries}
\label{sec:preliminaries}

In this section, we are going to introduce the notation as well as to give a brief overview of known results that will be used in the current work. Although we will provide explanations and references for these results, one can also consult \citet{PST_BackPropagChaos} for more details.
Let $(\Omega,\mathcal{G},\mathbb{G},\mathbb{P})$ denote a complete stochastic basis in the sense of \citet[Chapter I, Definition 1.3]{jacod2013limit}. 
Once there is no ambiguity about the reference filtration, we will conceal the dependence on $\mathbb{G}$.
The letters $p,n$ and $d$ will denote arbitrary natural numbers. 
Let $q \in \mathbb{N}^*$, then we will denote by $|\cdot|$ the Euclidean norm on $\mathbb{R}^q$. 
Moreover, for every $(m,q) \in \mathbb{N}^*\times \mathbb{N}^*$ we will denote by $\mathbb{R}^{m\times q}$ the $m\times q-$matrices with real entries. 
We will also denote by $\|\cdot\|_2$ the Euclidean norm on $\mathbb{R}^{m\times q}$, that is, for $z \in \mathbb{R}^{m\times q}$ we have $\|z\|_2^2 := \text{Tr}[z^\mathsf{T}z]$. 
Notice that we identify $\mathbb{R}^q$ as $\mathbb{R}^{q \times 1}$. In addition, we will use abusively the notation $\mathscr{N} := \{1,\dots,N\}$, for $N\in \mathbb{N}$.


\subsection{Martingales}

Let us denote by $\mathcal{H}^2(\mathbb{G};\mathbb{R}^p)$ the set of square integrable $\mathbb{G}-$martingales, \emph{i.e.}, 
\begin{align*}
\mathcal{H}^2(\mathbb{G};\mathbb{R}^p)
    :=\big\{ X:\Omega\times\mathbb{R}_+ {}\longrightarrow \mathbb{R}^p, X \text{ is a $\mathbb{G}-$martingale with }\sup_{t\in\mathbb{R}_+}\mathbb E[\vert X_t\vert^2]<\infty \big\}, 
\end{align*}
equipped with its usual norm 
\begin{align*}
\Vert X\Vert^2_{\mathcal{H}^2(\mathbb{G};\mathbb{R}^p)}
    := \mathbb{E}\big[ \vert X_\infty \vert^2\big]
     = \mathbb{E}\left[{{\rm Tr}[\langle X\rangle_\infty^{\mathbb{G}}]}\right],
\end{align*}
where $\langle X \rangle^{\mathbb{G}}$ denotes the $\mathbb{G}-$predictable quadratic variation of $X$, which is the $\mathbb{G}-$predictable compensator of the $\mathbb{G}-$optional quadratic variation $[X]$.

Let us also define a notion of orthogonality between two square--integrable martingales. 
More precisely, we say that $X\in \mathcal{H}^2(\mathbb{G};\mathbb{R}^p)$ and $Y \in \mathcal{H}^2(\mathbb{G};\mathbb{R}^q)$ are (mutually) orthogonal if and only if $\langle X,Y\rangle^{\mathbb{G}}=0$, and we denote this relation by $X \perp_{\mathbb{G}} Y$. 
Moreover, for a subset $\mathcal{X}$ of $\mathcal{H}^2(\mathbb{G};\mathbb{R}^p)$, we denote the space of martingales orthogonal to each component of every element of $\mathcal{X}$ by $\mathcal{X}^{\perp_{\mathbb{G}}}$, \emph{i.e.},
\begin{align*}
\mathcal{X}^{\perp_{\mathbb{G}}} 			
  &:=\big\{ Y\in\mathcal{H}^2(\mathbb{G};\mathbb{R}),\   \langle Y,X\rangle^{\mathbb{G}}=0\text{ for every } X\in\mathcal{X} \big\}.
\end{align*}
A martingale $X\in\mathcal{H}^2(\mathbb{G};\mathbb{R}^p)$ will be called a \emph{purely discontinuous} martingale if $X_0=0$ and if each of its components is orthogonal to all continuous martingales of $\mathcal{H}^2(\mathbb{G};\mathbb{R}).$ 
Using \cite[Chapter I, Corollary 4.16]{jacod2013limit} we can decompose $\mathcal{H}^2(\mathbb{G};\mathbb{R}^p)$ as follows
\begin{align}\label{SqIntMartDecomp}
\mathcal{H}^{2}(\mathbb{G};\mathbb{R}^p)
=\mathcal{H}^{2,c}(\mathbb{G};\mathbb{R}^p)\oplus\mathcal{H}^{2,d}(\mathbb{G};\mathbb{R}^p),
\end{align}
where $\mathcal{H}^{2,c}(\mathbb{G};\mathbb{R}^p)$ denotes the subspace of $\mathcal{H}^2(\mathbb{G};\mathbb{R}^p)$ consisting of all continuous square--integrable martingales and $\mathcal{H}^{2,d}(\mathbb{G};\mathbb{R}^p)$ denotes the subspace of $\mathcal{H}^2(\mathbb{G};\mathbb{R}^p)$ consisting of all purely discontinuous square--integrable martingales. 



\subsection{It\=o stochastic integrals}

Using \cite[Chapter III, Section 6.a]{jacod2013limit}, in order to define the stochastic integral with respect to a square--integrable martingale $X \in \mathcal{H}^2(\mathbb{G};\mathbb{R}^p)$, we need to select a $\mathbb{G}-$predictable, non-decreasing and right continuous process $C^{\mathbb{G}}$ with the property that 
\begin{align}\label{comp_property}
\langle X\rangle^{\mathbb{G}} 
    = \int_{(0,\cdot]} \frac{\ud \langle X\rangle_s^{\mathbb{G}}}{\ud C^{\mathbb{G}}_s}\ud C_s^{\mathbb{G}},
\end{align} 
where the equality is understood component-wise.
That is to say, $\frac{\ud \langle X\rangle^{\mathbb{G}}}{\ud C^{\mathbb{G}}}$ is a predictable process with values in the set of symmetric non--negative definite $p\times p$ matrices, that $\frac{\ud \langle X\rangle^{\mathbb{G}}}{\ud C^{\mathbb{G}}}$ is well defined can be easily deduced from the predictable section theorem; a short and elementary proof of the former can be found in \citet{THEODORAKOPOULOS2025110324}.
Then, we define the set of integrable processes as
\begin{align*}
\mathbb H^2(\mathbb{G},X;\mathbb{R}^{d \times p}) 
    &:= \left\{ Z:(\Omega\times\mathbb{R}_+,\mathcal{P}^{\mathbb{G}}) 
    {}\longrightarrow (\mathbb{R}^{d\times p},\mathcal{B}(\mathbb{R}^{d\times p})),\  
    \mathbb{E}\left[{\int_0^\infty {\rm Tr}\left[Z_t \frac{\ud \langle X\rangle_s^{\mathbb{G}}}{\ud C_s^{\mathbb{G}}}Z_t^{\top}\right] \ud C_t^{\mathbb{G}}} \right] <\infty \right\},
\end{align*}
where $\mathcal{P}^{\mathbb{G}}$ denotes the $\mathbb{G}-$predictable $\sigma-$field on $\Omega\times\mathbb{R}_+$; see \cite[Chapter I, Definition 2.1]{jacod2013limit}. 
The associated stochastic integrals will be denoted by $Z\cdot X$ or by $\int Z_s \textup{d} X_s$.
In case we need to underline the filtration under which the It\=o stochastic integral is defined, we will write either $(Z\cdot X)^{\mathbb{G}}$ or $(\int Z_s \ud X_s)^{\mathbb{G}}$.
We will denote the space of It\=o stochastic integrals of processes in $\mathbb H^2(\mathbb{G},X)$ with respect to $X$ by $\mathcal{L}^2(\mathbb{G},X)$.
In particular, for $X^c\in\mathcal{H}^{2,c}(\mathbb{G};\mathbb{R}^d)$ we remind the reader that, by \cite[Chapter III, Theorem 4.5]{jacod2013limit}, $Z\cdot X^c\in\mathcal{H}^{2,c}(\mathbb{G};\mathbb{R}^d)$ for every $Z\in\mathbb H^2(X^c,\mathbb{G})$, \emph{i.e.}, $\mathcal{L}^2(X^c,\mathbb{G})\subset\mathcal{H}^{2,c}(\mathbb{G};\mathbb{R}^d)$.


\subsection{Integrals with respect to an integer--valued random measure}

Let us now expand the space, and accordingly the predictable $\sigma$--algebra, in order to construct measures that depend also on the height of the jumps of a stochastic process, that is
\begin{align*}
\big(\widetilde{\Omega},\widetilde{\mathcal{P}}^{\mathbb{G}}\big):=
    \big(\Omega\times\mathbb{R}_{+}\times \mathbb{R}^{n},\mathcal{P}^{\mathbb{G}}\otimes \mathcal{B}({\mathbb{R}^{n}})\big).
\end{align*} 
A measurable function 
$U:\big(\widetilde{\Omega},\widetilde{\mathcal{P}}^{\mathbb{G}}\big)\longrightarrow \left(\mathbb{R}^{d},\mathcal{B}({\mathbb{R}^{d}}) \right)$ 
is called \emph{$\widetilde{\mathcal{P}}^{\mathbb{G}}-$measurable function} and, abusing notation, the space of these functions will also be denoted by $\widetilde{\mathcal{P}}^{\mathbb{G}}$.
Let us also denote by $\widetilde{\mathcal{P}}^{\mathbb{G}}_+$ the space of non--negative $\widetilde{\mathcal{P}}^{\mathbb{G}}-$measurable functions.\footnote{We will adopt analogous notation for any non--negative measurable function; for example, for a $\sigma-$algebra $\mathcal{A}$, the set $\mathcal{A}_+$ denotes the set of non--negative $\mathcal{A}-$measurable functions.}

 
Let $\mu$ be a random measure on $\left(\mathbb{R}_{+}\times\mathbb{R}^{n},\mathcal{B}(\mathbb{R}_{+})\otimes\mathcal{B}(\mathbb{R}^{n})\right)$.
Consider a function $U \in \widetilde{\mathcal{P}}^{\mathbb{G}}$, then we define the process
\begin{align*}
U * \mu_\cdot(\omega) :=
			\begin{cases}
			\displaystyle \int_{(0,\cdot]\times\mathbb{R}^{n}} 
U\left(\omega,s,x\right) \mu(\omega; \ud s, \ud x),\textrm{ if } \int_{(0,\cdot]\times\mathbb{R}^{n}} 
|U\left(\omega,s,x\right)| \mu(\omega,\ud s, \ud x)<\infty,\\
		\displaystyle	\infty,\textrm{ otherwise}.
			\end{cases}    
\end{align*}

Let $X\in\mathcal{H}^{2}(\mathbb{G};\mathbb{R}^n)$, and denote by $\mu^{X}$ the $\mathbb{G}$--optional integer--valued random measure associated to the jumps of $X$.
In order to define the stochastic integral of a function $U \in \widetilde{\mathcal{P}}^{\mathbb{G}}$ with respect to the \emph{$\mathbb{G}-$compensated integer--valued random measure 
${\widetilde{\mu}}^{(\mathbb{G},X)}\!:=\mu^X-\nu^{(\mathbb{G},X)}$}, we will consider the following class
\begin{align*}
G_2(\mathbb{G},\mu^X):=\bigg\{
U:\big(\widetilde{\Omega},\widetilde{\mathcal{P}}^{\mathbb{G}}\big){}\longrightarrow
\big(\mathbb{R}^{d},\mathcal{B}(\mathbb{R}^{d})\big),\, 
	\mathbb{E}\bigg[\sum_{t>0} \left|U(t,\Delta X_t)\mathds{1}_{\{\Delta 
X_t\neq0 \}}-\widehat{U}_t^{(\mathbb{G},X)}\right|^2\bigg]
	<\infty
\bigg\}.
\end{align*}
Any element of $G_2(\mathbb{G},\mu^X)$ can be associated to an element of $\mathcal{H}^{2,d}(\mathbb{G};\mathbb{R}^d)$, uniquely up to $\mathbb{P}-$indistinguisha\-bi\-lity, via 
\begin{align*}
G_2(\mathbb{G},\mu^X)\ni U{}\longmapsto U\star{\widetilde{\mu}}^{(\mathbb{G},X)}\in \mathcal{H}^{2,d}(\mathbb{G};\mathbb{R}^d),
\end{align*}
see \cite[Chapter II, Definition 1.27 and Proposition 1.33.a]{jacod2013limit} and \citet[Theorem XI.11.21]{he2019semimartingale}.
We call $U\star{\widetilde{\mu}}^{(\mathbb{G},X)}$ the \emph{stochastic integral of $U$ with respect to ${\widetilde{\mu}}^{(\mathbb{G},X)}$}. 
Let us point out that for an arbitrary function of $G_2(\mathbb{G},\mu^X)$ the two processes $U*(\mu^X - \nu^{(\mathbb{G},X)})$ and $U\star \widetilde{\mu}^{(\mathbb{G},X)}$ are not equal.
We will make use of the following notation for the space of stochastic 
integrals with respect to ${\widetilde{\mu}}^{X}$ which are square integrable martingales
\begin{align*}
\mathcal{K}^2(\mathbb{G},\mu^X)
  := \left\{U\star{\widetilde{\mu}}^{(\mathbb{G},X)},\  U\in G_2(\mathbb{G},\mu^X)\right\}. 
\end{align*}
Moreover, by \cite[Chapter II, Theorem 1.33]{jacod2013limit} or \cite[Theorem 11.21]{he2019semimartingale}, we have that 
$\mathbb{E}\left[\langle U\star{\widetilde{\mu}}^{(\mathbb{G},X)}\rangle^{\mathbb{G}}_{\infty}\right]<\infty$ if and only if $U\in G_2(\mathbb{G},\mu^X)$, which enables us to define the following more convenient space
\begin{align*}
\mathbb{H}^2(\mathbb{G},X)
 :=\left\{ U:\big(\widetilde{\Omega},\widetilde{\mathcal{P}}^{\mathbb{G}}\big) 
  {}\longrightarrow \big(\mathbb{R}^{d},\mathcal{B}(\mathbb{R}^{d})\big),\ 
  \mathbb{E}\left[
  \text{Tr} 
  \Big[\langle U\star{\widetilde{\mu}}^{(\mathbb{G},X)}\rangle^{\mathbb{G}}_t\Big]\right]<\infty \right\},
\end{align*}
and we emphasize that we have the direct identification $\mathbb{H}^2(\mathbb{G},X) = G_2(\mathbb{G},\mu^X).$

\subsection{Dol\'{e}ans-Dade measure and disintegration} 

Assume that we are given a square--integrable $\mathbb{G}$--martingale $X\in\mathcal{H}^{2}(\mathbb{G};\mathbb{R}^n)$ along with its associated random measures $\mu^{X}$ and $\nu^{(\mathbb{G},X)}$.
In $\big(\widetilde{\Omega},\mathcal{G}_{\infty} \otimes \mathcal{B}(\mathbb{R}_+) \otimes \mathcal{B}(\mathbb{R}^n)\big)$ we can define the Dol\'{e}ans-Dade measures of $\mu^{X}$, resp. of $\nu^{(\mathbb{G},X)}$, as follows
\begin{align*}
M_{\mu^{X}}(A) &:= \mathbb{E}\left[\mathds{1}_A * \mu^{X}_{\infty}\right],
    \quad \text{resp. } \quad 
M_{\nu^{(\mathbb{G},X)}}(A) := \mathbb{E}\left[\mathds{1}_A * \nu^{(\mathbb{G},X)}_{\infty}\right],
\end{align*}
for every $A \in \mathcal{G}_{\infty} \otimes \mathcal{B}(\mathbb{R}_+) \otimes \mathcal{B}(\mathbb{R}^n)$.
Because $M_{\mu^{X}}$ is $\sigma$--integrable with respect to $\widetilde{\mathcal{P}}^{\mathbb{G}}$, we can define, for every non--negative $\mathcal{G}_{\infty} \otimes \mathcal{B}(\mathbb{R}_+) \otimes \mathcal{B}(\mathbb{R}^n)$--measurable function $W$, its conditional expectation with respect to $\widetilde{\mathcal{P}}^{\mathbb{G}}$ using $M_{\mu^{X}}$, which we denote by $M_{\mu^{X}}[W |\widetilde{\mathcal{P}}]$. 


Consider a pair of martingales $\overline{X} := (X^\circ,X^{\natural}) \in \mathcal{H}^2(\mathbb{G};\mathbb{R}^p) \times \mathcal{H}^{2}(\mathbb{G};\mathbb{R}^n)$, and $|I|$ the map where $|I|(x) := |x| + \mathds{1}_{\{0\}}(x)$, for $x\in\mathbb{R}^n$, then we define 
\begin{align}\label{def_C}
    C^{(\mathbb{G},\overline{X})} 
    := \text{Tr}\left[\langle X^{\circ} \rangle^{\mathbb{G}}\right] + |I|^2 * \nu^{(\mathbb{G},X^{\natural})}.
\end{align}
%
%
Let us now make some immediate, yet crucial, observations for the current work.
\begin{remark}\label{rem:crucial_remarks}
\begin{enumerate}
    \item \label{rem:C_properties}
The fact that $\overline{X}$ is a $\mathbb{G}-$martingale allows one to alternatively write \eqref{def_C} as follows
\begin{align*}
    C^{(\mathbb{G},\overline{X})} 
    = \textup{Tr}\big[\langle \overline{X}\rangle^{\mathbb{G}} \big]
    =       \textup{Tr}\big[\langle X^{\circ}\rangle^{\mathbb{G}} \big] 
        +   \textup{Tr}\big[\langle X^{\natural} \rangle^{\mathbb{G}} \big].
\end{align*}
In other words, we need to argue about the identity $\textup{Tr}\big[\langle X^{\natural} \rangle^{\mathbb{G}} \big] = |I|^2 * \nu^{(\mathbb{G},X^{\natural})}$.
Indeed, the martingale property yields 
\begin{align*}
    \widehat{I}^{(\mathbb{G},X^{\natural})}_t 
    =
    \mathbb{E}\Big[ \int_{\mathbb{R}^n} x \mu^{X^\natural} (\omega;\{t\}\times \textup{d}x) \Big| \mathcal{G}_{t-}\Big]
    =\mathbb{E} \big[ \Delta X^{\natural}_t \big| \mathcal{G}_{t-}\big]
    =0;
\end{align*}
for the last identity see \cite[Chapter I, Lemma 2.27]{jacod2013limit}.
Hence, from \cite[Chapter II, Lemma 1.33]{jacod2013limit} we conclude.
We stress that the above are true because the integer--valued measure associates to the jumps of the martingale $X^{\natural}$. 
%
%
    \item \label{rem:immersion_no_change_in_comp}
Assuming that $\mathbb{G}$ is immersed in $\mathbb{H}$, \emph{i.e.}, $\mathbb{H}$ is a filtration such that $\mathcal{G}_t\subseteq \mathcal{H}_t$ for every $t\in\mathbb{R}_+$ and, additionally, it possesses the property that every $\mathbb{G}-$martingale is an $\mathbb{H}-$martingale, then $C^{(\mathbb{G},\overline{X})}$ and $C^{(\mathbb{H},\overline{X})}$ are indistinguishable.
Indeed, one immediately has that 
\begin{align*}
    \langle X^{\circ}\rangle^{\mathbb{G}} - 
    \langle X^{\circ}\rangle^{\mathbb{H}}
    = (\langle X^{\circ}\rangle^{\mathbb{G}} - X^{\circ}\cdot\left(X^{\circ}\right)^{\top})
    +(X^{\circ}\cdot\left(X^{\circ}\right)^{\top} - \langle X^{\circ}\rangle^{\mathbb{H}})
\end{align*}
is an $\mathbb{H}-$martingale, $\mathbb{H}-$predictable and of finite variation.
Hence, since its initial value is $0$, it is the zero martingale, which proves that $\langle X^{\circ}\rangle^{\mathbb{G}}$ and $\langle X^{\circ}\rangle^{\mathbb{H}}$ are indistinguishable.
One can argue analogously for the processes $|I|^2*\nu^{(\mathbb{G},X^{\natural})}$ and $|I|^2*\nu^{(\mathbb{H},X^{\natural})}$.
In other words, we are allowed to interchange the filtration symbol in the notation of \eqref{def_C}, or even omit it.
More general results associated to the immersion of filtrations are presented in \cite[Appendix B.2]{PST_BackPropagChaos}, which we will make use of hereinafter.
\end{enumerate}
\end{remark}

Returning to \eqref{def_C}, one notices that we can disintegrate  
$\nu^{(\mathbb{G},X^{\natural})}$, \emph{i.e.}, 
we can determine kernels 
$K^{(\mathbb{G},\overline{X})} :\left(\Omega \times \mathbb{R}_+, \mathcal{P}^{\mathbb{G}}\right) {}\longrightarrow \mathcal{R}\left(\mathbb{R}^n,\mathcal{B}(\mathbb{R}^n)\right)$, where $\mathcal{R}\left(\mathbb{R}^n,\mathcal{B}(\mathbb{R}^n)\right)$ are the Radon measures on $\mathbb{R}^n$, such that 
\begin{align}\label{def:Kernels}
  \nu^{(\mathbb{G},X^{\natural})}(\omega,\ud t,\ud x) = K^{(\mathbb{G},\overline{X})}(\omega,t,\ud x)\ud C^{(\mathbb{G},\overline{X})}_t(\omega).
\end{align}
The kernels $K^{(\mathbb{G},\overline{X})}$ are $\mathbb{P} \otimes C^{(\mathbb{G},\overline{X})}-$unique, as can be deduced by a straightforward Dynkin class argument.

Moreover, let us define 
\begin{equation}\label{def_c}
    c^{(\mathbb{G},\overline{X})} := \bigg(\frac{\ud \langle X^{\circ} \rangle^{\mathbb{G}}}{\ud C^{(\mathbb{G},\overline{X})}}\bigg)^{\frac{1}{2}}.
\end{equation}
The reader may observe that $\frac{\ud \langle X^{\circ}\rangle^{\mathbb{G}}}{\ud C^{(\mathbb{G},\overline{X})}}$ is a $\mathbb{G}-$predictable process with values in the set of symmetric, non--negative definite $p\times p$ matrices. 
Using the diagonalization property of these matrices and results of \citet[Corollary 2]{azoff1974borel}, one can easily show that $c^{(\mathbb{G},\overline{X})}$ will also be a $\mathbb{G}-$predictable process with values in the set of symmetric, non--negative definite $p\times p$ matrices.


\subsection{Orthogonal decompositions}

Let us now state the decomposition results that will be used to solve the BSDEs of interest; for more details, we refer to \citet[Section 2.2.1]{papapantoleon2018existence}.

Let $\overline{X}:=(X^\circ,X^\natural)\in\mathcal{H}^2(\mathbb{G};\mathbb{R}^p)\times\mathcal{H}^{2,d}(\mathbb{G};\mathbb{R}^n)$ with $M_{\mu^{X^\natural}}[\Delta X^\circ|\widetilde{\mathcal{P}}^{\mathbb{G}}]=0$. 
Using this assumption, we get that for $Y^1 \in \mathcal{L}^2(X^\circ, \mathbb{G})$ and $Y^2 \in \mathcal{K}^2(\mu^{X^\natural}, \mathbb{G})$ it holds that $\langle Y^1,Y^2 \rangle = 0$; see \emph{e.g.} \citet[Theorem 13.3.16]{cohen2015stochastic}.
Then we define
\begin{align*}
\mathcal{H}^2(\overline{X}^{\perp_{\mathbb{G}}})  :=\big(\mathcal{L}^2(X^\circ, \mathbb{G})\oplus\mathcal{K}^2(\mu^{X^\natural}, \mathbb{G})\big)^{\perp_{\mathbb{G}}}.
\end{align*}  
Subsequently, we have the following description for $\mathcal{H}^2(\overline{X}^{\perp_{\mathbb{G}}})$, which is \cite[Proposition 2.6]{papapantoleon2018existence}.
Let $\overline{X}:=(X^\circ,X^\natural)\in\mathcal{H}^2(\mathbb{G};\mathbb{R}^p)\times\mathcal{H}^{2,d}(\mathbb{G};\mathbb{R}^n)$ with $M_{\mu^{X^\natural}}[\Delta X^\circ|\widetilde{\mathcal{P}}^{\mathbb{G}}]=0$.
Then,
\begin{align*}
\mathcal{H}^2(\overline{X}^{\perp_{\mathbb{G}}}) 
    = \big\{ L\in\mathcal{H}^2(\mathbb{G};\mathbb{R}^d),\ \langle X^{\circ}, L\rangle^{\mathbb{G}}=0 \text{ and } M_{\mu^{X^\natural}}[\Delta L|\widetilde{\mathcal{P}}^{\mathbb{G}}]=0\big\}.
\end{align*}
Moreover, the space $\big(\mathcal{H}^2(\overline{X}^{\perp_{\mathbb{G}}}), \Vert \cdot\Vert_{\mathcal{H}^2(\mathbb{R}^d)}\big)$ is closed.

Summing up the previous results, we arrive at the decomposition result that is going to dictate the structure of the BSDEs in our setting
\begin{align*}
\mathcal{H}^2(\mathbb{G};\mathbb{R}^p) 
= \mathcal{L}^2(X^\circ,\mathbb{G}) \oplus \mathcal{K}^2(\mu^{X^\natural},\mathbb{G})\oplus \mathcal{H}^2(\overline{X}^{\perp_{\mathbb{G}}}),
\end{align*}
where each of the spaces appearing in the identity above is closed.


%


\subsection{Norms and spaces}\label{Norms and spaces}

We will largely follow the notation of \cite[Section 2.3]{papapantoleon2018existence} with regard to the norms and spaces of stochastic processes.
However, we will need to additionally keep track of the filtration under which we are working, since later many filtrations will appear in our framework.

Let $\overline{X}:=(X^\circ,X^\natural)\in \mathcal{H}^2(\mathbb{G};\mathbb{R}^p)\times\mathcal{H}^{2,d}(\mathbb{G};\mathbb{R}^n)$ with 
$M_{\mu^{X^\natural}}[\Delta X^\circ|\widetilde{\mathcal{P}}^{\mathbb{G}}]=0$,
 and $A,C:(\Omega \times \mathbb{R}_{+},\mathcal{P}^{\mathbb{G}}) {}\longrightarrow (\mathbb{R}_{+},\mathcal{B}(\mathbb{R}_+))$ be c\`adl\`ag and increasing.
The following spaces will appear in the analysis throughout this article, for $\beta \geq 0$ and $T$ a $\mathbb{G}-$stopping time:
\begin{align*}
\mathbb{L}^{2}_{\beta}(\mathcal{G}_{T}, A;\mathbb{R}^d)
&:= \left\{\xi, \hspace{0.2cm} \mathbb{R}^{d} \text{--valued, } \mathcal{G}_{T}  \text{--measurable} , \|\xi\|^{2}_{\mathbb{L}^{2}_{\beta}(\mathcal{G}_{T}, A;\mathbb{R}^d)}: = \mathbb{E}\left[\mathcal{E}(\beta A)_{T-} |\xi|^2 \right] < \infty\right\},\\
\mathcal{H}^{2}_{\beta}(\mathbb{G},A;\mathbb{R}^d) 
&:= \left\{M \in \mathcal{H}^{2}(\mathbb{G},A;\mathbb{R}^d) , \|M\|^{2}_{\mathcal{H}^{2}_{\beta}(\mathbb{G},A;\mathbb{R}^d)}:= \mathbb{E} \left[\int_{0}^{T}\mathcal{E}(\beta A)_{t-} \, \ud\text{Tr}\big[\langle M \rangle^{\mathbb{G}}\big]_t \right] < \infty \right\},\\
\mathbb{H}^{2}_{\beta}(\mathbb{G},A,C;\mathbb{R}^d) 
&:= \begin{multlined}[t][0.75\textwidth]
 \bigg\{\phi, \hspace{0.2 cm} \mathbb{R}^{d} \text{--valued, } \mathbb{G}\text{--optional},\\  
 \|\phi\|^{2}_{\mathbb{H}^{2}_{\beta}(\mathbb{G},A,C;\mathbb{R}^d)}:= \mathbb{E} \left[\int_{0}^{T}\mathcal{E}(\beta A)_{t-}|\phi|_{t}^2 \, \ud C_t \right] < \infty  \bigg\},
\end{multlined}\\
\mathcal{S}^{2}_{\beta}(\mathbb{G},A;\mathbb{R}^d) 
&:=
\begin{multlined}[t][0.75\textwidth]
\bigg\{\phi, \hspace{0.2 cm} \mathbb{R}^{d} \text{--valued,} \hspace{0.1 cm} \mathbb{G}\text{--optional}, \\ \|\phi\|^{2}_{\mathcal{S}^{2}_{\beta}(\mathbb{G},A;\mathbb{R}^d)}:= 
\mathbb{E} \Big[\sup_{t \in [0,T]}\big\{\mathcal{E}(\beta A)_{t-}|\phi|_{t}^2\big\} \Big] < \infty  \bigg\},
\end{multlined}
\\
\mathbb{H}^{2}_{\beta}(\mathbb{G},A,X^\circ;\mathbb{R}^{d \times p}) 
&:=
\begin{multlined}[t][0.75\textwidth]
    \bigg\{Z \in \mathbb{H}^{2}(\mathbb{G},X^{\circ};\mathbb{R}^{d \times p}),  \\
    \|Z\|^{2}_{\mathbb{H}^{2}_{\beta}((\mathbb{G},A,X^\circ;\mathbb{R}^{d \times p}))}:= \mathbb{E} \left[\int_{0}^{T}\mathcal{E}(\beta A)_{t-} \, \ud\text{Tr}\big[\langle Z \cdot X^\circ \rangle^{\mathbb{G}}\big]_t \right] < \infty  \bigg\},
\end{multlined}
    \\
\mathbb{H}^{2}_{\beta}(\mathbb{G},A,X^\natural;\mathbb{R}^d)
&:= 
\begin{multlined}[t][0.75\textwidth]
    \bigg\{U \in \mathbb{H}^{2}(\mathbb{G},X^\natural;\mathbb{R}^d),  \\ \|U\|^{2}_{\mathbb{H}^{2}_{\beta}(\mathbb{G},A,X^\natural;\mathbb{R}^d)}:= \mathbb{E} \left[\int_{0}^{T}\mathcal{E}(\beta A)_{t-} \, \ud\text{Tr}\big[\langle U  \star \tilde{\mu}^{X^{\natural}} \rangle^{\mathbb{G}}\big]_t \right] < \infty  \bigg\},
\end{multlined}
\shortintertext{and}
\mathcal{H}^{2}_{\beta}(\mathbb{G},A,\overline{X}^{\perp_{\mathbb{G}}};\mathbb{R}^{d}) 
&:= \left\{M \in \mathcal{H}^{2}(\overline{X}^{\perp_{\mathbb{G}}}), \|M\|^{2}_{\mathcal{H}^{2}_{\beta}(\mathbb{G}, A,\overline{X}^{\perp_{\mathbb{G}}};\mathbb{R}^d)}:= \mathbb{E} \left[\int_{0}^{T}\mathcal{E}(\beta A)_{t-} \, \ud\text{Tr}\big[\langle M \rangle^{\mathbb{G}}\big]_t \right] < \infty \right\}.
\end{align*}

\begin{remark}
Let $A$ be a finite variation process, then we have defined and used above the process 
\begin{align}\label{stocheq}
    \mathcal{E}(A)_\cdot := \ue^{A_\cdot} \prod_{s \leq \cdot} \frac{1 + \Delta A_s}{\ue^{\Delta A_s}},
\end{align}
which is called the stochastic exponential of $A$. 
Using the trivial inequalities $ 0\leq 1 + x \leq \ue^x$, for all $x \geq -1$, and the usual properties of the jumps of finite variation processes, we can easily see that the above process is well defined, adapted, c\`adl\`ag and locally bounded. 
Moreover, it is known that, if $\Delta A \geq - 1$,  then 
    \begin{align}
        0 \leq \mathcal{E}(A)_\cdot \leq \ue^{A_\cdot}.
    \end{align}
\end{remark}

Now, let $\alpha$ be a $\mathbb{P}\otimes C$--a.e. non--negative, predictable with respect to $\mathbb{G}$, $\mathbb{R}-$valued process such that
$A_{\cdot} = \int_{0}^{\cdot}\alpha_s^2\ud C_s$. Then we define as 
\begin{align*}
    \mathscr{S}^{2}_{\hat{\beta}}(\mathbb{G},\alpha,C;\mathbb{R}^d) :=
    \left\{y\in \mathcal{S}^2_{\hat{\beta}}(\mathbb{G},A;\mathbb{R}^d) : \| \alpha y \|_{\mathbb{H}_{\hat{\beta}}(\mathbb{G},A,C;\mathbb{R}^d)}<\infty\right\},
\end{align*}
with corresponding norm defined by
\begin{align}\label{eq 4.5}
    \| \cdot \|^2_{\mathscr{S}^{2}_{\hat{\beta}}(\mathbb{G},\alpha,C;\mathbb{R}^d)}
    :=
    \| \cdot\|^2_{\mathcal{S}^2_{\hat{\beta}}(\mathbb{G},A;\mathbb{R}^d)}
    +
    \|\alpha \cdot\|^2_{\mathbb{H}_{\hat{\beta}}(\mathbb{G},A,C;\mathbb{R}^d)}.
\end{align}
Hence, for
\begin{align*}
    \left(Y,Z,U,M\right) \in  
\mathscr{S}^{2}_{\beta}(\mathbb{G},\alpha,C;\mathbb{R}^d) \times 
\mathbb{H}^{2}_{\beta}(\mathbb{G},A,X^\circ;\mathbb{R}^{d\times p}) \times \mathbb{H}^{2}_{\beta}(\mathbb{G},A,X^\natural;\mathbb{R}^d) \times  \mathcal{H}^{2}_{\beta}(\mathbb{G},A,\overline{X}^{\perp_{\mathbb{G}}};\mathbb{R}^d)
\end{align*}
we define
\begin{multline*}  
    \|\left(Y,Z,U,M\right) \|^{2}_{\star,\beta,\mathbb{G},\alpha,C,\overline{X}} \\
    :=  \|Y\|^{2}_{\mathscr{S}^{2}_{\beta}(\mathbb{G},\alpha,C;\mathbb{R}^d)} + \|Z\|^{2}_{\mathbb{H}^{2}_{\beta}(\mathbb{G},A,X^\circ;\mathbb{R}^{d\times p})} + 
    \|U\|^{2}_{\mathbb{H}^{2}_{\beta}(\mathbb{G},A,X^\natural;\mathbb{R}^d)} +  \|M\|^{2}_{\mathcal{H}^{2}_{\beta}(\mathbb{G},A,\overline{X}^{\perp_{\mathbb{G}}};\mathbb{R}^d)}.
\end{multline*}
Later on, we will need to rewrite the norms associated to the spaces $\mathbb{H}^{2}_{\beta}(\mathbb{G},A,X^\circ;\mathbb{R}^{d \times p})$ and $\mathbb{H}^{2}_{\beta}(\mathbb{G},A,X^\natural;\mathbb{R}^{d}) $ in terms of Lebesgue--Stieltjes integrals with respect to $C^{(\mathbb{G},\overline{X} )}$, for $C^{(\mathbb{G},\overline{X})}$ as defined in \eqref{def_C}; one may consult \cite[Lemma 2.13]{papapantoleon2018existence} for the details.
Then, for $(Z,U) \in 
\mathbb{H}^2_{\beta}(\mathbb{G}, A, X^\circ;\mathbb{R}^d) \times \mathbb{H}^2_{\beta}(\mathbb{G}, A, X^\natural;\mathbb{R}^d)$ and $C^{(\mathbb{G},\overline{X})}$ as defined in \eqref{def_C}, we have
 \begin{align}
     \|Z\|^2_{\mathbb{H}^2_{\beta}(\mathbb{G}, A, X^\circ;\mathbb{R}^d)} &= \mathbb{E}\left[\int_{0}^{T}\mathcal{E}({\beta} A)_{s-}\|Z_s c^{(\mathbb{G},\overline{X})}_s\|^2\ud C^{(\mathbb{G},\overline{X})}_s\right] \label{comp_norm_altern_circ}
     \shortintertext{and}
     \|U\|^2_{\mathbb{H}^2_{\beta}(\mathbb{G}, A, X^\natural;\mathbb{R}^d)} &= \mathbb{E}\left[\int_{0}^{T}\mathcal{E}({\beta} A)_{s-}\left(\tnorm{U_s(\cdot)}^{(\mathbb{G},\overline{X})}_s\right)^2\ud C^{(\mathbb{G},\overline{X})}_s\right],\label{comp_norm_altern_natural}
 \end{align}
 where 
\begin{align}
\begin{multlined}[0.9\textwidth]
\Big(\tnorm{U_t(\omega;\cdot)}^{(\mathbb{G},\overline{X})}_t(\omega)\Big)^2 
 :=
 \int_{\mathbb{R}^n}\big|U(\omega,s,x) - \widehat{U}_s^{(\mathbb{G},\overline{X})}(\omega)\big|^2 K^{(\mathbb{G},\overline{X})}_s(\omega,\ud x) \label{def:tnorm}\\
 + \big(1 - {\zeta}^{(\mathbb{G},X^{\natural})}_s(\omega)\big) \Delta C^{(\mathbb{G},\overline{X})}_s(\omega) \left|\int_{\mathbb{R}^n}U(\omega,s,x)\,K^{(\mathbb{G},\overline{X})}_s(\omega,\ud x)\right|^2,
\end{multlined}
\end{align}
with $K^{(\mathbb{G},\overline{X})}$ satisfying \eqref{def:Kernels}, \emph{i.e.},
 \begin{align*}
     \nu^{(\mathbb{G}, X^{\natural})}(\omega,\ud t,\ud x) 
     = K^{(\mathbb{G}, \overline{X})}(\omega,t,\ud x) \ud C^{(\mathbb{G},\overline{X})}_t(\omega).
 \end{align*} 
Finally, using the assumption $M_{\mu^{X^\natural}}[\Delta X^\circ | \widetilde{\mathcal{P}}^{\mathbb{G}}]=0$, in conjunction with \cite[Theorem 13.3.16]{cohen2015stochastic}, we have  
 \begin{align}\label{identity:norm_ZU}
     \|Z \cdot X^\circ + U \star \widetilde{\mu}^{X^\natural}\|^2_{\mathcal{H}^2_{\beta}(\mathbb{G},A;\mathbb{R}^d)} = \|Z\|^2_{\mathbb{H}^2_{\beta}(\mathbb{G},A,X^\circ;\mathbb{R}^{d \times p})} + \|U\|^2_{\mathbb{H}^2_{\beta}(\mathbb{G},A,X^\natural;\mathbb{R}^d)}.
 \end{align}

\begin{remark}\label{rem:notation_beta_zero}
In order to simplify the notation whenever possible, if we consider one of the aforementioned spaces for $\beta=0$, then we will omit $0$. 
As a result, the dependence on the process $A$ is redundant, hence we will also omit the process $A$ from the notation of the respective space.
As an example, $\mathbb{L}^2(\mathcal{G}_T;\mathbb{R}^d)$ denotes the space $\mathbb{L}^2_0(\mathcal{G}_T,A;\mathbb{R}^d)$, which is the classical Lebesgue space, and so forth.  
\end{remark}

In case we have to deal with a system of $N\in\mathbb{N}$ couples of martingales, we will need to introduce norms associated to the respective product space. 
To this end, let
$ \{\overline{X}^i\}_{i \in \mathscr{N}}$ be a family of couples of martingales, \emph{i.e.}, $\overline{X}^i:= (X^{i,\circ},X^{i,\natural}) \in \mathcal{H}^2(\mathbb{G};\mathbb{R}^p) \times \mathcal{H}^{2,d}(\mathbb{G}; \mathbb{R}^n)$, such that $M_{\mu^{X^{i,\natural}}}[\Delta X^{i,\circ}|\widetilde{\mathcal{P}}^{\mathbb{G}}]=0,$ for all $i \in \mathscr{N}$. 
Moreover, let $A^i:(\Omega \times \mathbb{R}_{+},\mathcal{P}^{\mathbb{G}}) {}\longrightarrow (\mathbb{R}_{+},\mathcal{B}(\mathbb{R}_+))$ be c\`adl\`ag and increasing, for every $i \in \mathscr{N}$. 
Let us denote 
\[
    (\textbf{Y}^N,\textbf{Z}^N,\textbf{U}^N,\textbf{M}^N) := ({Y}^{i,N}, {Z}^{i,N}, {U}^{i,N}, {M}^{i,N})_{i \in \mathscr{N}}.
\]
Then, for 
\begin{multline*}
(\textbf{Y}^N,\textbf{Z}^N,\textbf{U}^N,\textbf{M}^N) \in \\ \Prod_{i = 1}^{N}
\mathscr{S}^{2}_{\hat{\beta}}(\mathbb{G},\alpha,C^i;\mathbb{R}^d)
\times \mathbb{H}^{2}_{\hat{\beta}}(\mathbb{G},A^i,X^{i,\circ};\mathbb{R}^{d \times p}) 
\times \mathbb{H}^{2}_{\hat{\beta}}(\mathbb{G},A^i,X^{i,\natural};\mathbb{R}^d) 
\times \mathcal{H}^{2}_{\hat{\beta}}(\mathbb{G},A^i,{\overline{X}^i}^{\perp_{\mathbb{G}}};\mathbb{R}^{d})
\end{multline*}
we define
\begin{align*}
\big\|(\textbf{Y}^N,\textbf{Z}^N,\textbf{U}^N,\textbf{M}^N )\big\|^{2}_{\star,N,\beta,\mathbb{G},\alpha,\{C^i\}_{i \in \mathscr{N}}, \{\overline{X}^i\}_{i \in \mathscr{N}}}
    := \sum_{i = 1}^{N}\|(Y^i,Z^i,U^i,M^i)\|^{2}_{\star,\beta,\mathbb{G},\alpha,C^i,\overline{X}^i}.
\end{align*}

Finally, let us conclude this subsection with the following important definitions. 
Let $E := \mathbb{R}_+ \times \mathbb{R}^n$ and $(F,\|\cdot\|_2)$, $(G,\|\cdot\|_2)$ be two finite-dimensional Euclidean spaces.
\begin{itemize}
    \item Set $E_0 := (\mathbb{R}_+ \times \{0\}) \cup (\{0\} \times \mathbb{R}^n)$ and $\widetilde{E} := E \setminus E_0$.
\vspace{0.2cm}
    \item Let $f :(F,\|\cdot\|_2) \longrightarrow (G,\|\cdot\|_2)$. We will call the support of $f$ the set $\text{supp}(f) := \overline{\{f \neq 0\}}^{\|\cdot\|_2}$, where for a set $A$ we denote by $\overline{A}^{\|\cdot\|_2}$ its closure under the metric corresponding to the norm $\|\cdot\|_2$.
\vspace{0.2cm}
    \item $C_c(F;G) :=\{f : (F,\|\cdot\|_2) \longrightarrow (G,\|\cdot\|_2) : f \hspace{0.1cm}\text{continuous with compact support}\}$.
    \vspace{0.2cm}
    \item $C_{c|\widetilde{E}}(E;\mathbb{R}^d) := \{ f \in C_c(E;\mathbb{R}^d) : \text{supp}(f)\subseteq \widetilde{E}\}$.
    \vspace{0.2cm}
    \item $D^{\circ,d\times p} \subseteq C_c(\mathbb{R}_+;\mathbb{R}^{d\times p})$ is a fixed, countable set, dense in the space $\mathbb{L}^2(\mathbb{R}_+,\mathcal{B}(\mathbb{R}_+),\langle X^{\circ}\rangle (\omega))$ for $\mathbb{P}-a.s. \hspace{0.1cm} \omega \in \Omega$. 
    Existence results for such sets appear in \cite[Lemma A.14.]{papapantoleon2023stability}.
\vspace{0.2cm}
\item $D^{\natural} \subseteq C_{c|\widetilde{E}}(E ;\mathbb{R}^d)$ is a fixed, countable set, which is dense in the space $\mathbb{L}^2(E,\mathcal{B}(E),\nu^{(\mathbb{G},X^\natural)}(\omega))$ for $\mathbb{P}-a.s. \hspace{0.1cm} \omega \in \Omega$. 
Existence results for such sets appear in \cite[Lemma A.15.]{papapantoleon2023stability}. Also, keep in mind that $\nu^{(\mathbb{G},X^\natural)}(\omega)(E_0) = 0, \hspace{0.1cm} \mathbb{P}-a.s.$ and $\int_{E}\|x\|^2_2\hspace{0.1cm} \nu^{(\mathbb{G},X^\natural)}(\omega)(\ud s, \ud x) < \infty, \hspace{0.1cm} \mathbb{P}-a.s.$
\end{itemize}


\subsection{The \texorpdfstring{\(\Gamma\)}{Gamma} function}
\label{sec:gamma}

The integrand in the stochastic integral with respect to the purely-discontinuous martingale $\widetilde\mu^{X^\natural}$ is a suitable process $U$, and the driver of the BSDE $f$ will, obviously, depend on this process. 
However, in the present work, we cannot simply require that $f$ is Lipscitz in this argument with respect to the $\tnorm{\cdot}$-norm defined in \eqref{def:tnorm}, as done, for example, in \citet{papapantoleon2018existence,papapantoleon2023stability}, because this norm depends on the filtration $\mathbb{G}$.
Hence, motivated by applications and connections to PDEs, see the remark below, we define a composition of functions, called the $\Gamma$ function, with a free parameter $\Theta$.


\begin{definition}\label{def_Gamma_function}
Let $\overline{X}:=(X^\circ,X^\natural)\in \mathcal{H}^2(\mathbb{G};\mathbb{R}^p)\times\mathcal{H}^{2,d}(\mathbb{G};\mathbb{R}^n)$, 
let $C^{(\mathbb{G},\overline{X})}$ be as defined in \eqref{def_C} 
and $K^{(\mathbb{G},\overline{X})}$ that satisfies \eqref{def:Kernels}.
Additionally, let $\Theta $ be an $\mathbb{R}$--valued, $\widetilde{\mathcal{P}}^{\mathbb{G}}-$measurable function such that $|\Theta|\leq |I|$, for $|I|(x) := |x| + \mathds{1}_{\{0\}}(x)$.
Define the process
$\Gamma^{(\mathbb{G}, \overline{X},\Theta)} : \mathbb{H}^{2}(\mathbb{G},X^{\natural};\mathbb{R}^d) {}\longrightarrow \mathcal{P}^{\mathbb{G}}(\mathbb{R}^d)$ such that, for every $s\in \mathbb{R}_+$, holds
\begin{multline*}
\Gamma^{(\mathbb{G},\overline{X},\Theta)}(U)_s(\omega) :=
\int_{\mathbb{R}^n}\left(U(\omega,s,x) - \widehat{U}_s^{(\mathbb{G},X^{\natural})}(\omega)\right)\left(\Theta(\omega,s,x) - \widehat{\Theta}_s^{(\mathbb{G},X^{\natural})}(\omega)\right)\,K^{(\mathbb{G},\overline{X})}_s(\omega,\ud x)\\
+ (1 - \zeta^{(\mathbb{G},X^{\natural})}_s(\omega))\Delta C^{(\mathbb{G},\overline{X})}_s(\omega) \int_{\mathbb{R}^n}U(\omega,s,x)\,K^{(\mathbb{G},\overline{X})}_s(\omega,\ud x) \int_{\mathbb{R}^n}\Theta(\omega,s,x)\,K^{(\mathbb{G},\overline{X})}_s(\omega,\ud x).
\end{multline*}
\end{definition}

\begin{remark}
\begin{enumerate}
    \item 
    Given the square--integrability of the martingale $X^\natural$, it is immediate that the process $\Theta$ as defined above lies in $\mathbb{H}^{2}(\mathbb{G},X^{\natural};\mathbb{R})$. 
    Therefore, for any $U\in\mathbb{H}^2(\mathbb{G},X^{\natural};\mathbb{R}^d)$, the process $\Gamma$ is well-defined $\mathbb{P}\otimes C^{(\mathbb{G},\overline{X})}-$a.e.
    \item 
    One would expect in $\Gamma$ a notational dependence on the kernel. 
    However, for $K_1^{(\mathbb{G},\overline{X})},K_2^{(\mathbb{G},\overline{X})}$ satisfying \eqref{def:Kernels} and $U \in \mathbb{H}^{2}(\mathbb{G},X^{\natural};\mathbb{R}^d)$ we have $\Gamma^{(\mathbb{G},\overline{X},\Theta)}(U)_s = \Gamma^{(\mathbb{G},\overline{X},\Theta)}(U)_s, \hspace{0.2cm} \mathbb{P} \otimes C^{(\mathbb{G},\overline{X})}-$a.e; here, implicitly, the left-hand side is defined with respect to $K_1$ and the right-hand side with respect to $K_2$.
    In that way, we have the uniqueness of the kernels that satisfy \eqref{def:Kernels}.
    Since in the respective computations all the appearing equalities will be taken under $\mathbb{P} \otimes C^{(\mathbb{G},\overline{X})}$, we have suppressed the notational dependence on the kernels.
    \item 
    The choice of $\Gamma$ was based on, and inspired by, applications.
    The reader may recall, for example, the connection between BSDEs and partial integro-differential equations, and the special structure that is required for the generator, see \emph{e.g.}  \citet{barles1997backward} or \citet[Section 4.2]{delong2013backward}. 
    Moreover, one can easily verify that $\Gamma$ is equal to
    \begin{align*}
        \frac{\ud \langle U \star \widetilde{\mu}^{(\mathbb{G},X^\natural)},\Theta \star \widetilde{\mu}^{(\mathbb{G},X^\natural)} \rangle^{\mathbb{G}}}{\ud C^{(\mathbb{G},\overline{X})}}.
    \end{align*}
\end{enumerate}
\end{remark}

The next lemma justifies the definition of the process $\Gamma$.
In fact, let us point out that, if one can prove that results analogous to \cref{lem:Gamma_is_Lipschitz} and \cite[Lemma B.6]{PST_BackPropagChaos} hold for a process $\Gamma$, then the results of the present article remain valid, with the appropriate modifications.

\begin{lemma}\label{lem:Gamma_is_Lipschitz}
Let $\overline{X} \in \mathcal{H}^2(\mathbb{G};\mathbb{R}^p)\times\mathcal{H}^{2,d}(\mathbb{G};\mathbb{R}^n)$ and 
$\Theta \in \widetilde{\mathcal{P}}^{\mathbb{G}}$ be an $\mathbb{R}$--valued function such that $|\Theta|\leq |I|$. 
Then, for every $U^1,U^2 \in \mathbb{H}^{2}(\mathbb{G},X^{\natural};\mathbb{R}^d)$, we have 
\begin{align*}
\big|\Gamma^{(\mathbb{G},\overline{X},\Theta)}(U^1)_t(\omega) - \Gamma^{(\mathbb{G},\overline{X},\Theta)}(U^2)_t(\omega)\big|^2
\leq 2 \left(\tnorm{U^1_t(\omega ; \cdot) - U^2_t(\omega; \cdot)}^{(\mathbb{G},\overline{X})}_t(\omega)\right)^2,\hspace{0.2cm}\mathbb{P} \otimes C^{(\mathbb{G},\overline{X})}-\text{a.e.}
\end{align*}
\end{lemma}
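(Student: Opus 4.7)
The plan is to exploit the $\mathbb{R}$-linearity of $\Gamma^{(\mathbb{G},\overline{X},\Theta)}$ in its $U$-argument (which is immediate from \cref{def_Gamma_function}) to reduce the claim to the single inequality
\begin{align*}
\big|\Gamma^{(\mathbb{G},\overline{X},\Theta)}(V)_t(\omega)\big|^2 \leq 2\,\big(\tnorm{V_t(\omega;\cdot)}^{(\mathbb{G},\overline{X})}_t(\omega)\big)^2,\qquad \mathbb{P}\otimes C^{(\mathbb{G},\overline{X})}\text{-a.e.},
\end{align*}
for $V := U^1 - U^2 \in \mathbb{H}^2(\mathbb{G},X^\natural;\mathbb{R}^d)$. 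Write $\Gamma(V)_s = A_s + B_s$, where $A_s$ is the $K^{(\mathbb{G},\overline{X})}_s$-integral over the ``compensated'' product and $B_s$ is the atom contribution $(1-\zeta^{(\mathbb{G},X^\natural)}_s)\Delta C^{(\mathbb{G},\overline{X})}_s\,(\int V K)(\int \Theta K)$. Applying $|a+b|^2 \leq 2(|a|^2 + |b|^2)$, componentwise Cauchy--Schwarz (summed over $j = 1,\dots,d$), and $(1-\zeta_s)^2 \leq (1-\zeta_s)$, one gets
\begin{align*}
\big|\Gamma(V)_s\big|^2 \leq 2\bigl(\alpha_s A_s + \beta_s B_s\bigr),
\end{align*}
with $\alpha_s := \int |V - \widehat{V}|^2 K_s$, $\beta_s := (1-\zeta_s)\Delta C_s |\int V K_s|^2$, and $A_s, B_s$ the analogous quantities built from $\Theta$ in place of $V$. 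By \eqref{def:tnorm} we have $\alpha_s + \beta_s = (\tnorm{V_s}^{(\mathbb{G},\overline{X})})^2$ and $A_s + B_s = (\tnorm{\Theta_s}^{(\mathbb{G},\overline{X})})^2$.

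Since $A_s, B_s \geq 0$ and $\alpha_s, \beta_s \geq 0$, the bound $\alpha A + \beta B \leq (\alpha+\beta)\max(A,B) \leq (\alpha+\beta)(A+B)$ reduces matters to showing
\begin{align*}
\bigl(\tnorm{\Theta_s}^{(\mathbb{G},\overline{X})}\bigr)^2 \leq 1 \qquad \mathbb{P}\otimes C^{(\mathbb{G},\overline{X})}\text{-a.e.}
\end{align*}
This is the one nontrivial step. Using the identities $\widehat{\Theta}_s = \Delta C_s^{(\mathbb{G},\overline{X})}\int \Theta K_s(dx)$ and $\zeta_s^{(\mathbb{G},X^\natural)} = \Delta C_s^{(\mathbb{G},\overline{X})}\int K_s(dx)$, a direct expansion of $|\Theta - \widehat{\Theta}|^2$ in \eqref{def:tnorm} produces the clean identity
\begin{align*}
\bigl(\tnorm{\Theta_s}^{(\mathbb{G},\overline{X})}\bigr)^2 = \int_{\mathbb{R}^n} |\Theta(s,x)|^2\, K^{(\mathbb{G},\overline{X})}_s(dx) \;-\; \Delta C_s^{(\mathbb{G},\overline{X})}\Bigl|\int \Theta K_s(dx)\Bigr|^2,
\end{align*}
where the atom contribution turns out to cancel rather than add (this is the computation that needs care on the $\Delta C > 0$ set versus the continuous set where $\widehat{\Theta} = 0$ and $\zeta = 0$). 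Dropping the non-positive correction and invoking $|\Theta|\leq|I|$ together with $K_s(\{0\}) = 0$ gives $\int|\Theta|^2 K_s \leq \int |x|^2 K_s(dx)$.

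The proof then concludes by reading \eqref{def_C} as a disintegration: $|I|^2 * \nu^{(\mathbb{G},X^\natural)} = \int_0^\cdot \int|x|^2 K_s(dx)\,dC^{(\mathbb{G},\overline{X})}_s \leq C^{(\mathbb{G},\overline{X})}_\cdot$, which forces $\int|x|^2 K_s(dx)\leq 1$ $\mathbb{P}\otimes C^{(\mathbb{G},\overline{X})}$-a.e.~by uniqueness of Radon--Nikodym densities. The main obstacle is precisely the bookkeeping in this last step: one must distinguish atom from continuous parts of $C^{(\mathbb{G},\overline{X})}$, exploit the fact that $\mu^{X^\natural}$ does not charge $\{\Delta X^\natural = 0\}$ so that $|I|^2$ and $|\cdot|^2$ agree under $K_s$, and recall from \cref{rem:crucial_remarks}\eqref{rem:C_properties} that $|I|^2 * \nu^{(\mathbb{G},X^\natural)}$ is the purely-discontinuous trace component of $C^{(\mathbb{G},\overline{X})}$. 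Once $(\tnorm{\Theta_s})^2 \leq 1$ is established, the asserted bound follows with constant $2$ (in fact with constant $1$, the extra factor being absorbed by the initial $|a+b|^2$-splitting).
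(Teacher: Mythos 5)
Your proof is correct and complete: the reduction via linearity of $U\mapsto\Gamma(U)$, the termwise Cauchy--Schwarz giving $|\Gamma(V)|^2\le 2(\tnorm{V})^2(\tnorm{\Theta})^2$, and the key normalization $(\tnorm{\Theta})^2\le\int|\Theta|^2K_s\,(dx)\le\int|I|^2K_s(dx)\le 1$ (from $|\Theta|\le|I|$, $K_s(\{0\})=0$, and $d(|I|^2*\nu^{(\mathbb{G},X^\natural)})\le dC^{(\mathbb{G},\overline{X})}$ by \eqref{def_C}) are exactly the ingredients needed, and your cancellation identity for $(\tnorm{\Theta_s})^2$ on the atoms of $C^{(\mathbb{G},\overline{X})}$ checks out using $\widehat{\Theta}_s=\Delta C_s\int\Theta K_s(dx)$ and $\zeta_s=\Delta C_s\int K_s(dx)\le 1$. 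This is the natural (and essentially the intended) argument; your closing observation that the constant $2$ is an artifact of the crude $|a+b|^2$-splitting and that the positive semidefinite bilinear form $\langle\langle V,\Theta\rangle\rangle=\Gamma(V)$ in fact yields the bound with constant $1$ is also accurate.
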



\subsection{Wasserstein distance}\label{sec:Wasserstein_dist}

Let $\mathbb{X}$ be a Polish space endowed with a metric $\rho$, then we denote by $\mathscr{P}(\mathbb{X})$ the space of probability measures on $(\mathbb{X},\rho)$. 
Moreover, for every real $q \in [1,\infty)$, we define the probability measures on $\mathbb{X}$ with finite $q$ moment to be 
\begin{align*}
    \mathscr{P}_q(\mathbb{X}) :=\left\{\mu : \int_{\mathbb{X}}\rho(x_0,x)^q \,\mu(\ud x) < \infty \right\},\text{ for some } x_0 \in \mathbb{X}.
\end{align*}
By the triangle inequality and the fact that we consider probability (\textit{i.e.} finite) measures, it is immediate that the space $\mathscr{P}_q(\mathbb{X})$ is independent of the choice of $x_0$. 
On $\mathscr{P}_q(\mathbb{X})$ we can define a mode of convergence, which we will simply call weak convergence in $\mathscr{P}_q(\mathbb{X})$, that says
\begin{gather*}
    \text{a sequence of probability measures} \hspace{0.2cm} \{\mu_m\}_{m \in \mathbb{N}}\text{ converges weakly in }\mathscr{P}_q(\mathbb{X})\text{ to a probability measure } \mu\\
\text{if and only if for every continuous function } f \text{ such that } |f(x)| \leq C \hspace{0.1cm}(1 + \rho(x_0,x)^q), \text{ where }C := C(f) \in \mathbb{R}_+, \\
\numberthis\label{fW}
\text{we have } \int_{\mathbb{X}}f(x)\,\mu_m(\ud x) \xrightarrow[m\to\infty]{} \int_{\mathbb{X}}f(x)\,\mu(\ud x).
\end{gather*}
Of course, as before, it is immediate that in the above definition it does not matter which $x_0$ we choose. 
Now, the topology induced by this stronger mode of convergence is metrizable from a metric with nice properties; this metric is called the Wasserstein distance of order $q$.
More precisely, given two probability measures $\mu, \nu \in \mathscr{P}_q(\mathbb{X})$ we define the Wasserstein distance of order $q$ between them to be 
\begin{equation}\label{def:wasserstein}
    \mathcal{W}_{q,\rho}^q(\mu,\nu) := \inf_{\pi \in \Pi(\mu,\nu)}\left\{\int_{\mathbb{X} \times \mathbb{X}}\rho(x,y)^q\,\pi(\ud x,\ud y)\right\},
\end{equation}
where $\Pi(\mu,\nu)$ are the probability measures on $\mathbb{X} \times \mathbb{X}$ with marginals $\pi_1 = \mu$ and $\pi_2 = \nu$.
The interested reader may consult \citet[Theorem 6.9]{villani2009optimal} for the fact that $\mathcal{W}_{q,\rho}$ metrizes $\mathscr{P}_q(\mathbb{X})$.
Moreover, $\mathscr{P}_q(\mathbb{X})$ with this mode of convergence is a Polish space, as can be seen from \cite[Theorem 6.18]{villani2009optimal}.

Given an $N \in \mathbb{N}$ and $\textbf{x}^N:=(x_1,\dots,x_N),\textbf{y}^N:=(y_1,\dots,y_N) \in \mathbb{X}^{N}$, we have the empirical measures 
\begin{align*}
    L^N(\textbf{x}^N) := \frac{1}{N}\sum_{i = 1}^N \delta_{x_i} \quad\text{ and }\quad L^N(\textbf{y}^N) := \frac{1}{N}\sum_{i = 1}^N \delta_{y_i},
\end{align*}
where $\delta_{\cdot}$ is the Dirac measure on $\mathbb{X}$. Then we have 
\begin{align}\label{empiricalineq}
    \mathcal{W}_{q,\rho}^q\big(L^N(\textbf{x}^N),L^N(\textbf{y}^N)\big) \leq \frac{1}{N} \sum_{i = 1}^{N}\rho(x_i,y_i)^q.
\end{align}
The above inequality is immediate if in the definition of the Wasserstein distance \eqref{def:wasserstein} we choose the probability measure $$\pi := \frac{1}{N}\sum_{i =1}^{N}\delta_{(x_i,y_i)},$$
where $\delta_{(\cdot,\cdot)}$ is the Dirac measure over $\mathbb{X} \times \mathbb{X}$. 
One can immediately see that $\pi_1 = L^N(\textbf{x}^N)$ and $\pi_2 = L^N(\textbf{y}^N)$.


\subsection{Skorokhod space}

Let $(m,q) \in \mathbb{N}\times \mathbb{N}$, and denote by $\mathbb{D}^{m \times q} := \{f:[0,\infty) {}\longrightarrow \mathbb{R}^{m \times q}: f\text{ c\`adl\`ag}\}$ the space of functions which are right-continuous on $[0,\infty)$ and admit a left limit for every $t\in(0,\infty)$.
We will simply write $\mathbb{D}^{m}$ instead of $\mathbb{D}^{m \times 1}$ or $\mathbb{D}^{1 \times m}$.
We supply $\mathbb{D}^{m\times q}$ with its usual $\textup{J}_1$--metric, which we denote by $\rho_{\textup{J}_1^{m \times q}}$. 
Endowed with this metric, $\mathbb{D}^{m \times q}$ becomes a Polish space. 
We are not going to get into the specifics of $\rho_{\textup{J}_1^{m \times q}}$, as we will only need a couple of its basic properties. 
Firstly, for every $x,y \in \mathbb{D}^{m\times q}$, we have 
\begin{align}\label{skorokineq}
\rho_{\textup{J}_1^{m \times q}}(x,y) \leq \sup_{s \in [0,\infty)}\{\|x_s - y_s\|_2\} \wedge 1.
\end{align}
Secondly, the Borel $\sigma-$algebra that $\rho_{\textup{J}_1^{m \times q}}$ generates coincides with the usual product $\sigma-$algebra on ${(\mathbb{R}^{m \times q})}^{[0,\infty)}$ that the projections generate. 
To be more precise, we have 
\begin{align}\label{skoalgebra}
    \mathcal{B}_{\rho_{\textup{J}_1^{m\times q}}}(\mathbb{D}^{m\times q}) = \sigma\Big(\textrm{Proj}_s^{-1}(A): A \in \mathcal{B}(\mathbb{R}^{m\times q}), s \in [0,\infty)\Big) \bigcap \mathbb{D}^{m\times q},
\end{align}
where ${(\mathbb{R}^{m\times q})}^{[0,\infty)} \ni x \overset{\textrm{Proj}_s}{\longmapsto} x(s)\in \mathbb{R}^{m\times q}$, for every $s \in [0,\infty)$. 
Additional results on the Skorokhod space are available from \cite[Chapter 15]{he2019semimartingale} or \cite[Chapter VI]{jacod2013limit}. 

Using that the $\sigma-$algebra on $\Omega$ is $\mathcal{G}$, it is obvious from \eqref{skoalgebra} that every $\mathcal{G} \otimes \mathcal{B}([0,\infty))-$jointly measurable c\`adl\`ag process $X$ can be seen as a function with domain $\Omega$ and taking values in $\mathbb{D}^{m\times q}$ such that $X$ is $\big(\mathcal{G}/\mathcal{B}_{\rho_{\textup{J}_1^d}}(\mathbb{D}^{m\times q})\big)-$measurable and \emph{vice versa}: every $\big(\mathcal{G}/\mathcal{B}_{\rho_{\textup{J}_1^d}}(\mathbb{D}^{m\times q})\big)-$measurable random variable $X$ can be seen as a $\mathcal{G} \otimes \mathcal{B}([0,\infty))-$jointly measurable c\`adl\`ag process.

\begin{remark}\label{rem:Sko}
Later on, when we say that a collection of c\`adl\`ag processes is independent or is identically distributed or is exchangeable, they will be understood as $\Big(\mathcal{G}/\mathcal{B}_{\rho_{\textup{J}_1^{m\times q}}}(\mathbb{D}^{m\times q})\Big)-$measurable random variables.
\end{remark}


\subsection{Weak convergence of filtrations}

Finally, let us collect here some useful results about the weak convergence of filtrations.

\begin{definition}
 Let $(m,q) \in \mathbb{N}\times \mathbb{N}$ and assume that $\{a^k\}_{k \in \overline{\mathbb{N}}}$ is a sequence of $\Big(\mathcal{G}/\mathcal{B}_{\rho_{\textup{J}_1^{m \times q}}}(\mathbb{D}^{m\times q})\Big)-$measurable random variables. 
 \begin{enumerate}
     \item We say that the sequence $\{a^k\}_{k \in\mathbb{N}}$ converges in probability under the $\textup{J}_1^{m\times q}-$metric to $a^{\infty}$, denoted by $a^k \xrightarrow[k\rightarrow \infty]{\left(\textup{J}_1(\mathbb{R}^{m\times q}),\mathbb{P}\right)} a^{\infty}$, if and only if, for every $\varepsilon >0$ we have
     \begin{align*}
         \mathbb{P}\left(\rho_{\textup{J}_1^{m\times q}}(a^k,a^{\infty}) > \varepsilon \right) \xrightarrow[k \rightarrow \infty]{|\cdot|} 0.
     \end{align*}
     
     \item For every $\vartheta \in [1,\infty)$, we say that the sequence $\{a^k\}_{k \in\mathbb{N}}$ converges in $\mathbb{L}^{\vartheta}-$mean under the $\textup{J}_1^{m\times q}-$metric to $a^{\infty}$, denoted by $a^k \xrightarrow[k\rightarrow \infty]{\left(\textup{J}_1(\mathbb{R}^{m\times q}),\mathbb{L}^{\vartheta}\right)} a^{\infty}$, if and only if, we have
     \begin{align*}
         \mathbb{E}\left[\left(\rho_{\textup{J}_1^{m\times q}}(a^k,a^{\infty})\right)^{\vartheta} \right] \xrightarrow[k \rightarrow \infty]{|\cdot|} 0.
     \end{align*}
 \end{enumerate}
\end{definition}

Now, let $\{\mathbb{G}^k:=(\mathcal{G}^k_t)_{t\in \mathbb{R}_+}\}_{k \in \mathbb{N}}$ and $\mathbb{G}^{\infty} := (\mathcal{G}^{\infty}_t)_{t \in \mathbb{R}_+}$ be subfiltrations of $\mathbb{G}$ that satisfy the usual conditions. 
In addition, define $\mathcal{G}^{\infty}_{\infty} := \bigvee_{t \in \mathbb{R}_+}\mathcal{G}^{\infty}_t$.

\begin{definition}
The sequence $\{\mathbb{G}^k\}_{k \in \mathbb{N}}$ converges weakly to $\mathbb{G}^{\infty}$, denoted by $\mathbb{G}^k\xrightarrow[k \rightarrow \infty]{\textup{w}} \mathbb{G}^{\infty}$, if and only if, for every set $S \in \mathcal{G}^{\infty}_{\infty}$, we have
\begin{align*}
    \mathbb{E}\left[\mathds{1}_S\Big|\mathcal{G}^k_{\cdot}\right] \xrightarrow[k \rightarrow \infty]{\left(\textup{J}_1(R),\mathbb{P}\right)} \mathbb{E}\left[\mathds{1}_S\Big|\mathcal{G}^{\infty}_{\cdot}\right].
\end{align*}
\end{definition}


\section{Setting}
\label{sec:Setting}

In this section, we introduce the framework under which we are going to work. 
In order to ease the abundant relevant notation, we will mainly use only the respective indices, \emph{i.e.}, we will abstain from explicitly denoting the datum (or data) upon which a quantity depends.
To this end, we will reserve the letters $k,i$ and $N$ to denote natural numbers, where $k$ may also take the value infinity, and assign to them a specific role, which is described by the forthcoming convention:
the letter $k$ denotes the set of standard data,
the letter $i$ denotes the label of the player, and 
the letter $N$ denotes the total number of players participating in the mean-field system.
Moreover, we will respect the order $k,i,N$.
Additionally, the letter $j$ will be reserved to denote a natural number and serve as a substitution of $i$ whenever two indices are required to be used for two distinct player labels.
Hence, after these explanatory comments, whenever three indices appear, the first one is associated to the standard data, the second one denotes the label of the player, and the third one denotes the total number of players. 
In case less than three of the aforementioned indices appear in the notation, given the reserved role for the indices $k,i$ and $N$, the interpretation will be still clear.

Let us fix for the remainder of the article a probability space $(\Omega,\mathcal{G},\mathbb{P})$ and a sequence of standard data $\{\mathscr{D}^k\}_{k\in\overline{\mathbb{N}}}$, where, 
for $k \in \overline{\mathbb{N}}$,
\begin{align}\label{def:seq_standard_data}
\mathscr{D}^k := 
    \big(\{\overline{X}^{k,i}\}_{i \in \mathbb{N}}, \{\mathbb{F}^{k,i}\}_{i\in\mathbb{N}},T^k,
    \{\xi^{k,i,N}\}_{i,N \in \mathbb{N}},
    \{\xi^{k,i}\}_{i \in \mathbb{N}},\Theta^k,\Gamma,f^k \big),
\end{align} 
where $\mathbb{F}^{k,i}:=(\mathcal{F}^{k,i}_t)_{t\geq 0}$ denotes the usual augmentation of the natural filtration of $\overline{X}^{k,i}$, for every $i\in\mathbb{N}$.
In the sequel, the following assumptions are satisfied by the data $\mathscr{D}^k$, for every $k \in \overline{\mathbb{N}}$, under a universal $\hat{\beta} > 0$:
\begin{enumerate} [label=\textup{\textbf{(B\arabic*)}}]
    \item\label{H1} The sequence 
    $\{\overline{X}^{k,i}\}_{i \in \mathbb{N}}$ consists of independent and identically distributed processes such that  
    \begin{align*}
    \overline{X}^{k,i}=(X^{k,i,\circ},X^{k,i,\natural})\in \mathcal{H}^2(\mathbb{F}^{k,i};\mathbb{R}^p) \times \mathcal{H}^{2,d}(\mathbb{F}^{k,i};\mathbb{R}^n)
    \text{ with }M_{\mu^{X^{k,i,\natural}}}[\Delta X^{k,i,\circ}|\widetilde{\mathcal{P}}^{\mathbb{F}^{k,i}}]
    = 0,
    \end{align*}
     for every $i\in \mathbb{N}$.
    Additionally, for every $i\in\mathbb{N}$, the process $\overline{X}^{k,i}$ has independent increments.\footnote{\label{footnote:Drop_index_i}The filtration $\mathbb{F}^{k,i}$ is associated to $\overline{X}^{k,i}$, for every $k,i\in\mathbb{N}$. 
    Hence, we will make use of the function $C^{k,i}{\overset{\triangle}{=}}C^{(\mathbb{F}^{k,i},\overline{X}^{k,i})}$, 
    the kernel $K^{k,i}{\overset{\triangle}{=}}K^{(\mathbb{F}^{k,i},\overline{X}^{k,i})}$ and 
    the function  
    $c^{k,i}{\overset{\triangle}{=}}c^{(\mathbb{F}^{k,i},\overline{X}^{k,i})}$, as defined in \eqref{def_C}, \eqref{def:Kernels} and \eqref{def_c}, respectively. 
    However, under the conditions imposed in \ref{H1}, 
    the aforementioned (deterministic) objects do not depend on $i\in\mathbb{N}$, \emph{i.e.}, they are common for all players associated to the data $\mathscr{D}^k$.
    Using this information, we will drop the index $i$ and we will denote the dependence only on the index associated to the standard data rendering thus the introduced notation of these objects in its simpler form $C^k$, $K^k$ and $c^k$.
    More detailed arguments about the aforementioned comments are provided in \cref{Remark 3.1}.\ref{Remark 3.1 ii}.} 

    \item\label{H2} The time horizon $T^k$ is deterministic. 
    The sequences of terminal conditions $\{\xi^{k,i}\}_{i \in \mathbb{N}}$ and $\{\xi^{k,i,N}\}_{i,N \in \mathbb{N}}$, for the $A^k$ defined in \ref{H5}, are such that the former consists of independent and identically distributed random variables and both of them consist of random variables which satisfy the following measurability and integrability conditions under $\hat{\beta}$: for every $i,N\in\mathbb{N}$  
    \begin{align*}
        \xi^{k,i}\in\mathbb{L}^2_{\hat{\beta}}(\mathcal{F}^{k,i}_{T^k}, A^{k};\mathbb{R}^d)
        \hspace{0.2em} \text{ and } \hspace{0.2em}
        \xi^{k,i,N}\in 
        \mathbb{L}^2_{\hat{\beta}}\big(\bigvee_{i = 1}^{N} \mathcal{F}^{k,i}_{T^k}, A^{k};\mathbb{R}^d\big),
        \text{ for every }  i\in\mathscr{N},
    \end{align*}
    \begin{align*}
        \big\| \xi^{k,i,N} - \xi^{k,i}\big\|_{\mathbb{L}^2_{\hat{\beta}}(\bigvee_{j = 1}^{N} \mathcal{F}^{k,j}_{T^k}, A^{k};\mathbb{R}^d)\hspace{0.2em}} 
        \xrightarrow[N \rightarrow \infty]{\hspace{0.2em}} 0 
    \end{align*}
    and
    \begin{align*}
        \frac{1}{N} \sum_{i = 1}^N 
        \big\|\xi^{k,i,N} - \xi^{k,i}\big\|^2_{\mathbb{L}^2_{\hat{\beta}}(\bigvee_{j = 1}^{N} \mathcal{F}^{k,j}_{T^k}, A^{k};\mathbb{R}^d)} \xrightarrow[N \rightarrow \infty]{} 0.
    \end{align*}
      
    \item\label{H3} The function $\Theta^k$ is deterministic, and the function $\Gamma^{k,i}\overset{\triangle}{=} \Gamma^{(\mathbb{F}^{k,i},\overline{X}^{k,i},\Theta^k)}$ is defined according to \cref{def_Gamma_function}, for every $i\in\mathbb{N}$.
    
    
    \item \label{H4} The generator $f^k: \mathbb{R}_+ \times \mathbb{R}^d \times \mathbb{R}^{d \times p} \times \mathbb{R}^d \times \mathscr{P}_2(\mathbb{R}^d) {}\longrightarrow \mathbb{R}^d$ is such that for any $(y,z,u,\mu) \in \mathbb{R}^d \times \mathbb{R}^{d \times p} \times \mathbb{R}^d \times \mathscr{P}_2(\mathbb{R}^d)$, the map 
    \begin{align*}
        t \longmapsto f^k(t,y,z,u,\mu) \hspace{0.2cm}\text{is}\hspace{0.2cm}\mathcal{B}(\mathbb{R}_+)\text{--measurable}
    \end{align*}
    and satisfies the following Lipschitz condition 
    \begin{align*}
    \begin{multlined}[0.9\textwidth]
    \big|f^k(t,y,z,u,\mu) - f^k(t,y',z',u',\mu')\big|^2\\
    \leq \hspace{0.1cm} r^k(t) \hspace{0.1cm} |y - y'|^2+ \hspace{0.1cm} \vartheta^{k,\circ}(t) \hspace{0.1cm} |z - z'|^2 
     + \hspace{0.1cm} \vartheta^{k,\natural}(t) \hspace{0.1cm} |u - u'|^2 + \vartheta^{k,*}(t) \hspace{0.1cm} W^2_{2,|\cdot|}\left(\mu,\mu'\right),
    \end{multlined}
    \end{align*}
     where  $
         (r^k,\vartheta^{k,\circ},\vartheta^{k,\natural},\vartheta^{k,*}): \left(\mathbb{R}_+, \mathcal{B}(\mathbb{R}_+)\right) \longrightarrow \left(\mathbb{R}^4_+,\mathcal{B}\left(\mathbb{R}^4_+\right)\right).
     $
    
    \item\label{H5}  Define $(\alpha^k)^2 := \max\{\sqrt{r^k},\vartheta^{k,\circ}, \vartheta^{k,\natural}, \sqrt{\vartheta^{k,*}}\}$. 
    Then, for the  c\`adl\`ag function
    \begin{align}\label{def:A_propag}
      A^{k}_{\cdot}
      := \int_{0}^{\cdot}(\alpha^k_s)^2\ud C^{k}_s
    \end{align} 
    there exists $\Phi^k \geq 0$ such that
    $
    \Delta A^{k}_t \leq \Phi^k,$ for every $t\in(0,T^k]$.
    
    \item \label{H6} The following holds 
       \begin{align}\label{equation 5.2}
       \mathbb{E}\bigg[\int_{0}^{T^k}\mathcal{E}\big(\hat{\beta} A^{k}\big)_{s-} \frac{|f^k(s,0,0,0,\delta_0)|^2}{(\alpha^k_s)^2}\ud C^{k}_s\bigg] < \infty,
       \end{align}
       where $\delta_0$ is the Dirac measure on the domain of the last argument concentrated at $0$, the neutral element of the addition. 

       
   \item \label{H:prop_contraction} 
   The following constant satisfies $3\widetilde{M}^{\Phi^k}(\hat{\beta}) < 1$, where 
   \begin{align*}
   \widetilde{M}^\Phi(\beta)
    &= \min_{\gamma \in (0,\beta)} \left\{ \frac{9}{\beta} +  8 \frac{(1 + \gamma\Phi)}{ \gamma} + \frac{2 + 9\beta}{\beta - \gamma}\hspace{0.1cm} \frac{(1 + \gamma \Phi)^2}{\gamma}\right\}
    \\
    &= \frac{ 2 \sqrt{\frac{2}{\beta} + 9}\sqrt{\frac{2}{\beta} + 17}  + \frac{4}{\beta} + 35}{\beta} + \left( 2 \sqrt{\frac{2}{\beta} + 9}\sqrt{\frac{2}{\beta} + 17}  + \frac{4}{\beta} + 26\right) \Phi.
   \end{align*}
\end{enumerate}
Using Assumptions \ref{H1}--\ref{H:prop_contraction}, the mean-field BSDE \eqref{mfBSDE}, for every $N\in\mathbb{N}$, as well as the McKean--Vlasov BSDE \eqref{MVBSDE}, for every $i\in\mathbb{N}$, admit a unique solution; for the former see
\cite[Theorem 4.13]{PST_BackPropagChaos}, and for the latter \cite[Theorem 4.8]{PST_BackPropagChaos}.
We remind the reader that the unique solution of the mean-field BSDE \eqref{mfBSDE} for $N-$players associated to the standard data $\mathscr{D}^k$ is denoted by $\textbf{S}^{k,N} : = (\textbf{Y}^{k,N},\textbf{Z}^{k,N},\textbf{U}^{k,N},\textbf{M}^{k,N})$, where $\textbf{Y}^{k,N} := (Y^{k,1,N},\dots,Y^{k,i,N},\dots,Y^{k,N,N})$.
Focusing only on the $i-$th player of the mean-field BSDE associated to the standard data $\mathscr{D}^k$, the solution will be denoted by $\mathscr{S}^{k,i,N}$, \emph{i.e.},
$\mathscr{S}^{k,i,N}:=(Y^{k,i,N},Z^{k,i,N},U^{k,i,N},M^{k,i,N})$ for $i\in\mathscr{N}$.
The unique solution of the McKean--Vlasov BSDE \eqref{MVBSDE} associated to the standard data $\mathscr{D}^k$ for the $i-$th player is indicated by $\mathscr{S}^{k,i}$, \emph{i.e.}, $\mathscr{S}^{k,i}:=(Y^{k,i},Z^{k,i},U^{k,i},M^{k,i})$.
Recalling the notational convention we adopted at the beginning of this section, there should be no confusion between $\mathscr{S}^{k,i,N}$ and $\mathscr{S}^{k,i}$.
\begin{remark}\label{Remark 3.1}
 \begin{enumerate}
     \item\label{Remark 3.1 i} In \ref{H2}, we used the $\sigma-$algebra $\bigvee_{i=1}^N\mathcal{F}_{T^k}^{k,i}$.
     Let us also define the filtration 
     \begin{align*}
         \mathbb{F}^{k,(1,\dots,N)} : = \bigvee_{i = 1}^{N} \mathbb{F}^{k,i}, \text{ \emph{i.e.}, } 
         \mathcal{F}^{k,(1,\dots,N)}_t: = \bigvee_{i = 1}^{N} \mathcal{F}^{k,i}_t
         \text{ for every }N\in\mathbb{N}, i\in\mathscr{N} \text{ and }t\in\mathbb{R}_+.
     \end{align*} 
     which satisfies the usual conditions, see  \citet[Theorem 1]{Wu1982}.
    For $i \in \mathscr{N}$ and $N\in\mathbb{N}$, 
    a direct consequence of the independence of filtrations is that every $\mathbb{F}^{k,i}-$martingale remains martingale under $\mathbb{F}^{k,(1,\dots,N)}$, \emph{i.e.}, the filtration $\mathbb{F}^{k,i}$ is immersed in the filtration $\mathbb{F}^{k,(1,\dots,N)}$.
    In particular, $X^{k,i,\natural} \in \mathcal{H}^{2,d}(\mathbb{F}^{k,(1,\dots,N)}; \mathbb{R}^n)$; see \cite[Corollary B.7]{PST_BackPropagChaos}.
     Additionally, from the assumption $M_{\mu^{{k,i,\natural}}}[\Delta X^{k,i,\circ}|\widetilde{\mathcal{P}}^{k,i}]
    = 0$ one can deduce that it is also true that $M_{\mu^{{k,i,\natural}}}[\Delta X^{k,i,\circ}|\widetilde{\mathcal{P}}^{\mathbb{F}^{k,(1,\dots,N)}}]= 0$; one can follow the exact same arguments as in the end of the proof of \cite[Lemma B.9]{PST_BackPropagChaos} .

    \item\label{Remark 3.1 ii}
The independence of the increments of the martingale $\overline{X}^{k,i}$ is equivalent to its associated triplet being deterministic, see \cite[Corollary 7.87]{medvegyev2007stochastic} or \cite[Chapter II, Theorem 4.15]{jacod2013limit}.
In particular, the aforementioned property implies that $C^{k,i}, K^{k,i}$ and $c^{k,i}$ are deterministic, see \eqref{def_C}, \eqref{def:Kernels} and \eqref{def_c}, respectively. 
In view of the fact that the elements of the sequence $\{ \overline{X}^{k,i}\}_{i\in\mathbb{N}}$ are identically distributed, we have that  $C^{k,1}=C^{k,i}$ for every $i\geq 2$, see \cite[Remark 4.14]{PST_BackPropagChaos} and, consequently from \eqref{def:Kernels} and \eqref{def_c}, $K^{k,1}=K^{k,i}$  and $c^{k,1}=c^{k,i}$ for every $i\geq 2$. 
Hence, omitting the index $i$ in the notation of $C^{k,i},K^{k,i}$ and $c^{k,i}$, which was introduced in \cref{footnote:Drop_index_i}, is justified.

\item\label{Remark 3.1 iii} Let us also clarify that the stochastic processes associated with the data $\mathscr{D}^k$ will be stopped at time $T^k$.
\end{enumerate}   
\end{remark}

Let us proceed  with a brief discussion about the notation involving filtrations, which will help in further simplifying the notation.
In the following, we assume that a set of standard data is fixed with associated index $k$, for $k\in\overline{\mathbb{N}}$.
Let us start with a subtle point,
namely the distinction between the use of the filtration associated to the $i-$th player and of that associated to the set of the first $N$ players; recall \ref{H1} and \cref{Remark 3.1} for the respective notation. 
Given the set of standard data and because of the immersion of the aforementioned smaller filtration into the larger one, see \cref{Remark 3.1}.\ref{Remark 3.1 i}, there is no harm in placing ourselves under the framework of any of the two when there are no measurability conflicts; see \emph{e.g.} \cite[Remark 2.1 and Section B.2]{PST_BackPropagChaos}.
If, however, there are sound reasons for a clarification, then this will be promptly provided.
Therefore, we will omit the symbol of a filtration, \emph{e.g.}, the predictable quadratic covariation will be simply denoted by $\langle \cdot,\cdot \rangle$, since the respective measurability will be easily concluded by the argument process. 
Having this simplification in mind, we will denote the predictable $\sigma-$algebra associated to the filtration $\mathbb{F}^{k,i}$ simply by $\mathcal{P}^{k,i}$, instead of $\mathcal{P}^{\mathbb{F}^{k,i}}$, and analogous simplification will be used for $\widetilde{\mathcal{P}}^{k,i}$.
Moreover, we simplify the notation associated to (integer-valued, random) measures. 
The integer-valued measure $\mu^{X^{k,\natural}}$ will be denoted by $\mu^{k,\natural}$, its compensator will be denoted by $\nu^{k,\natural}$ and the compensated integer--valued measure 
 will be denoted by $\widetilde{\mu}^{k,\natural}$.

Finally, we conclude this discussion about the notational simplifications with some comments related to the spaces introduced in \Cref{Norms and spaces}.
Given the set of standard data $\mathscr{D}^k$, for $k\in\overline{\mathbb{N}}$, a label $i$ of a player and the total number of players $N$, the main symbol of a space will be kept and the superscripts will be modified as follows:
the indices $k,i,N$ (if all necessary) will be affixed, succeeding the number $2$ and preceding any of the symbols $\circ, \natural, \perp$, if they are required; the dimensions of the state spaces will be omitted only when no confusion arises.
As an example, 
$\mathcal{H}^{2}_{\beta}(\mathbb{F}^{k,i},A^{k},\overline{X}^{\perp_{\mathbb{F}^{k,i}}};\mathbb{R}^{d})$
will be simply denoted by 
$\mathcal{H}^{2,k,i,\perp}_{\beta}(\mathbb{R}^d)$.

The rule for the norms makes the indices $k,i,N$ perform as subscripts which succeed the symbols $\star$ (if any) and $\beta$, and precede the value $t$, \emph{e.g.}, $ \Vert \cdot\Vert_{\star,\beta,k,i}$, $\tnorm{\cdot}_{k,i,t}$. \label{notation:short_norm}
In case the considered filtration is $\mathbb{F}^{k,(1,\dots,N)}$, the index that denotes the label of the player will be replaced by $(1,\dots,N)$, \emph{e.g.}, 
$\mathcal{H}^{2}_{\beta}(\mathbb{F}^{k,(1,\dots,N)},A^{k},\overline{X}^{\perp_{\mathbb{F}^{k,i}}};\mathbb{R}^{d})$
will be simply denoted by 
$\mathcal{H}^{2,k,(1,\dots,N),\perp}_{\beta}(\mathbb{R}^d)$.

We are now ready to provide the necessary conditions for the stability properties we are interested in proving.
Namely, the stability of McKean--Vlasov BSDEs, which will lead to the stability of the backward propagation of chaos.\footnote{Under conditions \ref{S1}--\ref{S10}, one cannot derive the stability of the mean-field BSDE \eqref{mfBSDE}, for any $N\in\mathbb{N}$. However, under modified assumptions, this can be proved as well; 
}
To this end, extending \citet{papapantoleon2023stability}, we are going to complement Assumptions \ref{H1}--\ref{H:prop_contraction} with the ones needed for the convergence of the data $\{\mathscr{D}^k\}_{k \in \mathbb{N}}$ and the convergence of the Lebesgue--Stieltjes integrals associated to the generators of the BSDEs. 

In the sequel, the following assumptions are considered:
\begin{enumerate} [label=\textup{\textbf{(S\arabic*)}}]
\item\label{S1} 
The martingale $X^{\infty,i,\circ}$ is continuous and 
the martingale $X^{\infty,i,\natural}$ is quasi-left-continuous, for every $i \in \mathbb{N}$,.
\item \label{S2} The following convergences hold, for every $i \in \mathbb{N}$, 
\begin{align*}
\overline{X}^{k,i} 
    \xrightarrow[k\to\infty]{\hspace{0.3em}(\textup{J}_1(\mathbb{R}^{p+n}),\mathbb{P})\hspace{0.3em}} 
    \overline{X}^{\infty,i}
    \hspace{0.3em}\text{ and }\hspace{0.3em}
\overline{X}^{k,i}_{\infty} 
    \xrightarrow[k\to\infty]{\hspace{0.3em}
    \mathbb{L}^2(\Omega,\mathcal{G},\mathbb{P};\mathbb{R}^{p+n})
    \hspace{0.3em}} 
    \overline{X}^{\infty,i}_{\infty}.
\end{align*}

\item The pair $\overline{X}^{\infty,i}$ possesses the $\mathbb{F}^{\infty,i}-$predictable representation property, for every $i \in \mathbb{N}$.
%
\item\label{S4} The following convergence holds, for every $i \in \mathbb{N}$,
\begin{align*}
     \big\|\xi^{k,i,N} - \xi^{k,i}\big\|^2_{\mathbb{L}^2(\Omega,\mathcal{G},\mathbb{P};\mathbb{R}^d)} \xrightarrow[(k,N) \to(\infty,\infty)]{|\cdot|} 0.
\end{align*}
Moreover,
\begin{align*}
    \frac{1}{N} \sum_{i = 1}^N \big\|\xi^{k,i,N} - \xi^{k,i}\big\|^2_{\mathbb{L}^2(\Omega,\mathcal{G},\mathbb{P};\mathbb{R}^d)} \xrightarrow[(k,N) \to(\infty,\infty)]{|\cdot|} 0.
\end{align*}


\item\label{S5} The following convergence holds, for every $i \in \mathbb{N}$,
\begin{align*}
    \xi^{k,i} \xrightarrow[k \rightarrow \infty]{\mathbb{L}^2(\Omega,\mathcal{G},\mathbb{P};\mathbb{R}^d)} \xi^{\infty,i}.
\end{align*}

\item\label{S6} The sequence of random variables
\begin{align*}
    \Bigg\{\int_{0}^{\infty}
    \frac{|f^k(s,0,0,0,\delta_0)|^2}{(\alpha^k)^2_s}\ud C^{k}_s\Bigg\}_{k \in \overline{\mathbb{N}}}
\end{align*}
is uniformly integrable, where $\delta_0$ is the Dirac measure on the domain of the last argument concentrated at $0$, the neutral element of the addition.

\item\label{S7} There exists $\overline{A} \in \mathbb{R}_+$ such that 
the sequence of real numbers $\{A^{k}_{\infty}\}_{k \in \overline{\mathbb{N}}}$ is bounded from $\overline{A}$; see \cref{Remark 3.1}.\ref{Remark 3.1 ii}.

\item\label{S8} The generators $\{f^k\}_{k \in \overline{\mathbb{N}}}$ possess, additionally to \ref{H4}, the following properties:
\begin{enumerate}
    \item For every $k \in \overline{\mathbb{N}},i \in \mathbb{N}, a \in \mathbb{D}^d, Z \in D^{\circ,d\times p}, U \in D^{\natural}, \mu \in \mathbb{D}(\mathscr{P}_2(\mathbb{R}^d),W_{2,|\cdot|})$, 
    %
    it holds that 
    \begin{align*}
f^k\big(\cdot,a_{\cdot},Z_{\cdot},
\Gamma^{k,i}
(U)_{\cdot}, \mu_{\cdot}\big)
\in \mathbb{D}^d.
    \end{align*}
    
    \item For every $i \in \mathbb{N}, Z \in D^{\circ,d\times p}, U \in D^{\natural}$ and a sequence of $\mathbb{R}^d-$valued stochastic processes $\{a^k\}_{k \in \overline{\mathbb{N}}}$ 
    such that 
    $\mathbb{E}\left[\sup_{t \in \mathbb{R}_+}|a^k_t|^2\right] < \infty$ for every $k\in \overline{\mathbb{N}}$, if $a^k \xrightarrow[k \rightarrow \infty]{\textup{J}_1(\mathbb{R}^d,|\cdot|)}a^{\infty},\hspace{0.1cm} \mathbb{P}-a.s.$, 
    then
\begin{align*}
f^k\big(\cdot,a_{\cdot}^k,Z_{\cdot},
\Gamma^{k,i}
(U)_{\cdot}, \mathcal{L}(a^k_{\cdot})\big)
\xrightarrow[k \rightarrow \infty]{\textup{J}_1(\mathbb{R}^d,|\cdot|)} 
f^{\infty}\big(\cdot,a_{\cdot}^{\infty},Z_{\cdot},
\Gamma^{\infty,i}
(U)_{\cdot}, \mathcal{L}(a^{\infty}_{\cdot})\big)
\quad\mathbb{P}-a.s. 
\end{align*}
    Moreover, if $\sup_{k \in \overline{\mathbb{N}}}\left\{\|a^k(\omega)\|_{\infty}\right\}_{k \in \overline{\mathbb{N}}} < \infty, \hspace{0.1cm} \mathbb{P}-a.s.$, then
\begin{align*}
    \sup_{k \in \overline{\mathbb{N}}}
    \left\{
    \left\|
    f^k\big(\cdot,a_{\cdot}^k,Z_{\cdot},
    \Gamma^{k,i}
    (U)_{\cdot}, \mathcal{L}(a^k_{\cdot})\big)
    \right\|_{\infty}\right\}_{k \in \overline{\mathbb{N}}} < \infty, \hspace{0.1cm} \mathbb{P}-a.s.
\end{align*}
\end{enumerate}
\item \label{S9}
\begin{enumerate}
    \item\label{S9.i} The sequence $\{\Phi^k\}_{k \in \overline{\mathbb{N}}}$ satisfies $\Phi^k \xrightarrow[k \rightarrow \infty]{|\cdot|} \Phi^{\infty} := 0$.

    \item\label{S9.ii} $\widetilde{M}^0(\hat{\beta}) \overset{\triangle}{=} \frac{ 2 \sqrt{\frac{2}{\hat{\beta}} + 9}\sqrt{\frac{2}{\hat{\beta}} + 17}  + \frac{4}{\hat{\beta}} + 35}{\hat{\beta}} < \frac{1}{4}$.
\end{enumerate}


\item\label{S10} The time horizon $T^{\infty}$ is finite and $T^k \xrightarrow[k \rightarrow \infty]{|\cdot|} T^{\infty}$.
\end{enumerate}

Let us add some remarks regarding the set of Assumptions \ref{S1}--\ref{S10}.
In view of Assumption \ref{H1}, the following are true for every $i\in\mathbb{N}$.
In \ref{S1}, we have assumed that the process $\overline{X}^{\infty,i}$ is quasi-left-continuous. 
Hence, the filtration $\mathbb{F}^{\infty,i}$ is also quasi-left-continuous, because this is the one generated by $\overline{X}^{\infty,i}$.
Actually, since $\overline{X}^{\infty,i}$ has independent increments, being quasi-left-continuous is equivalent to having no fixed times of discontinuity, see \citet[Section 1]{Wang_RemarksIndependentIncrem}.
Next, the alerted reader may observe that we have not (explicitly) assumed the weak convergence of the filtrations $\{\mathbb{F}^{k,i}\}_{k\in\overline{\mathbb{N}}}$, \emph{i.e.}, we did not assume the analogon of \citet[Assumption (S4)]{papapantoleon2023stability}.
This apparent omission can be justified because the combination of \ref{H1} and \ref{S2} yields that 
\begin{align}\label{weak_conv_Filtrations}
\mathbb{F}^{k,i}\xrightarrow[k\to\infty]{\textup{w}}\mathbb{F}^{\infty,i};
\end{align}
see \citet[Proposition 2]{coquet2001weak}.

Having justified the validity of \eqref{weak_conv_Filtrations}, for every $i\in\mathbb{N}$, we may proceed to discuss another apparent omission, namely the lack of assumptions on the convergence of the sequences 
$\{C^k_{\cdot}\}_{k\in\overline{\mathbb{N}}}$ and
$\{C^k_{\infty}\}_{k\in\overline{\mathbb{N}}}$
, \emph{i.e.}, we did not assume the analogon to \citet[Assumption (S9)]{papapantoleon2023stability}.
Recalling \cref{rem:crucial_remarks}.\ref{rem:C_properties}, the function $C^k$ is defined as the trace of $\langle \overline{X}^k \rangle$, for every $k\in\overline{\mathbb{N}}.$ 
The combination of \ref{S2} and \eqref{weak_conv_Filtrations} yields, via \citet[Corollary 12]{memin2003stability}, the convergence 
\begin{align*}
    \langle \overline{X}^{k,i} \rangle 
    \xrightarrow[k\to\infty]{\hspace{0.3em}\textup{J}_1(\mathbb{R}^{p+n})\hspace{0.3em}}
    \langle \overline{X}^{\infty,i} \rangle,
\end{align*}
which, in particular, implies, recalling \cref{rem:crucial_remarks}.\ref{rem:C_properties}, that
\begin{align*}
    C^k=\textup{Tr}\big[\langle \overline{X}^{k,i} \rangle\big] 
    \xrightarrow[k\to\infty]{\hspace{0.3em}\textup{J}_1(\mathbb{R}^{p})\hspace{0.3em}}
    \textup{Tr}\big[\langle \overline{X}^{\infty,i} \rangle\big] = C^{\infty}.
\end{align*}
The continuity of $C^{\infty}$, because of the quasi-left-continuity of $\overline{X}^{\infty,i}$ for every $i\in\mathbb{N}$, allows the above convergence to hold under the locally uniform convergence.
In view of \ref{S10}, we then have
\begin{align*}
    C^k_{\infty}=C^k_{T^{k}} 
    \xrightarrow[k\to\infty]{\hspace{0.3em}|\cdot|\hspace{0.3em}}
    C^{\infty}_{T^{\infty}}= C^{\infty}_{\infty},
\end{align*}
because the limit time horizon $T^{\infty}$ is finite.

At this point, let us present an equivalent form of Assumption \ref{S4}. 
More precisely, in view of \ref{H2}, \ref{S4} is equivalent to
\begin{gather*}
    \sup_{k \in \mathbb{N}} \left\{ \big\|\xi^{k,i,N} - \xi^{k,i}\big\|^2_{\mathbb{L}^2(\Omega,\mathcal{G},\mathbb{P};\mathbb{R}^d)}\right\} \xrightarrow[N \rightarrow \infty]{|\cdot|} 0,
    \shortintertext{and}
    \sup_{k \in \mathbb{N}} \left\{ \frac{1}{N} \sum_{i = 1}^N \big\|\xi^{k,i,N} - \xi^{k,i}\big\|^2_{\mathbb{L}^2(\Omega,\mathcal{G},\mathbb{P};\mathbb{R}^d)}\right\} \xrightarrow[N \rightarrow \infty]{|\cdot|} 0.
\end{gather*}

Finally, let us turn our attention to Assumption \ref{S9}.\ref{S9.ii}. 
We want to point out that the left-hand side of the inequality corresponds to the summand of $\widetilde{M}^{\Phi}(\hat{\beta})$, which is independent of $\Phi$, \emph{i.e.}, 
it coincides with $\widetilde{M}^0(\hat{\beta})$; see \ref{H:prop_contraction}.
Combining the two parts of Assumption \ref{S9}, we get that finally, for all $k\in\mathbb{N}$, it holds that
\begin{align}\label{ineq:Mtilde_smaller_quarter}
    \widetilde{M}^{\Phi^k}(\hat{\beta}) <\frac{1}{4}.
\end{align}
In other words, \ref{S9} strengthens \ref{H:prop_contraction}.


\section{Stability properties}
\label{sec:StabilityProperties}

In this section, we will present the main result of the current work, namely the stability of the backward propagation of chaos. 
We have already described in the introduction the approach we intend to follow.
In order to prepare the main result, we will also provide a stability result for McKean--Vlasov BSDEs. The stability of McKean--Vlasov BSDEs is a direct generalization of analogous results for (standard) BSDEs, see \textit{e.g.} \citet{briand2002robustness} and \citet{papapantoleon2019stability}, and means that given a convergent sequence of standard data for a McKean--Vlasov BSDE, each associated to each own filtration, then the associated (unique) solutions are also convergent.
The stability of backward propagation of chaos is a novel result, that states that given a sequence of mean-field BSDEs that satisfy the backward propagation of chaos property, then the limiting McKean--Vlasov BSDEs are convergent.

Let us point out that, in the framework described so far, we cannot establish the stability of mean-field BSDEs for any $N\in\mathbb{N}$; for this reason, there is no down arrow appearing between $\mathscr{S}^{k,i,N}$ and $\mathscr{S}^{\infty,i,N}$ in the second to last row of the \cref{table:propagation scheme}. 
However, this stability is not necessary in order to prove the main result, \textit{i.e.} the stability of the backward propagation of chaos.
The lack of stability in the aforementioned case is mainly because we have not imposed any condition on the convergence of the respective terminal conditions.    
Once such a condition is imposed, \emph{i.e.}, for $N\in\mathbb{N}$
\begin{align*}
    \xi^{k,i,N} \xrightarrow[k \rightarrow \infty]{\mathbb{L}^2(\Omega,\mathcal{G},\mathbb{P},\mathbb{R}^d)} \xi^{\infty,i,N},
\end{align*}
accompanied with a suitable extension of the continuity property for the generators, then the stability of mean-field BSDEs for $N$ players can also be proved.
These conditions appear later as \ref{S5'} and \ref{S8'}. 

At this point, let us recall the versions of the Moore--Osgood theorem that we are going to use; see \citet[Chapter VI, Sections 336–337]{hobson1907theory}.
\begin{itemize}
    \item[] Given a double indexed sequence $\{s_{k,q}\}_{(k,q)\in \mathbb{N}\times\mathbb{N}}$ of real numbers we have that 
    \begin{align*}
    \lim_{k\rightarrow \infty}\lim_{q\rightarrow \infty} s_{k,q} = \lim_{q\rightarrow \infty}\lim_{k\rightarrow \infty} s_{k,q} = \lim_{(k,q)\rightarrow(\infty,\infty)} s_{k,q} \in \mathbb{R}
    \end{align*}
    if any of the following conditions hold:
    \begin{enumerate}
        \item The $\lim_{q\rightarrow\infty}\lim_{k\rightarrow \infty}s_{k,q}$ exists, and the convergences $\lim_{k\rightarrow \infty}s_{k,q}$ are uniform with respect to $q \in \mathbb{N}$.
        \item The $\lim_{q\rightarrow\infty}s_{k,q}, \lim_{k\rightarrow \infty}s_{k,q}$ finally exist for every $k,q \in \mathbb{N}$, and one type of the convergences is uniform with respect to the index of the other.
    \end{enumerate}
    Note that, although in the above statement the sequence $\{s_{k,q}\}_{(k,q)\in \mathbb{N}\times \mathbb{N}}$ takes values in $\mathbb{R}$, that is only for simplicity.
    In practice, we could have an arbitrary metric space in place of $\mathbb{R}$.
\end{itemize}

The union of Assumptions \ref{H1}--\ref{H:prop_contraction} and \ref{S1}--\ref{S10} leads to the desired stability property for the McKean--Vlasov BSDE associated to the $i-$th player.
The next theorem serves as the precise statement for this result.

\begin{theorem}[Stability of McKean--Vlasov BSDEs]\label{McKean--Vlasov Stability}
Consider a sequence of data $\{\mathscr{D}^k\}_{k\in\overline{\mathbb{N}}}$ as described in \eqref{def:seq_standard_data}, which satisfy \ref{H1}--\ref{H:prop_contraction} for every $k\in\overline{\mathbb{N}}$, and assume that \ref{S1}--\ref{S10} are in force.
Then, the following are true, for every $i\in\mathbb{N}$:
\begin{align}\label{MVStab:con.1} \begin{multlined}[0.9\textwidth]
\Big(Y^{k,i},Z^{k,i}\cdot X^{k,i,\circ} + U^{k,i} \star \widetilde{\mu}^{{k,i,\natural}},M^{k,i}\Big)
\xrightarrow[k \rightarrow \infty]{\hspace{0.3em}(\textup{J}_1,\mathbb{L}^2)\hspace{0.3em}}
\Big(Y^{\infty,i},Z^{\infty,i}\cdot X^{\infty,i,\circ} + U^{\infty,i} \star \widetilde{\mu}^{{\infty,i,\natural}},0\Big),
\end{multlined} \end{align}
\begin{align}\label{MVStab:con.2}\begin{multlined}[0.9\textwidth]
\Big(
    [Y^{k,i}],[Z^{k,i}\cdot X^{k,i,\circ} + U^{k,i} \star \widetilde{\mu}^{{k,i,\natural}}],
    [M^{k,i}],[Y^{k,i},X^{k,i,\circ}],[Y^{k,i},X^{k,i,\natural}],[Y^{k,i},M^{k,i}]\Big) \\
\xrightarrow[k \rightarrow \infty]{\hspace{0.3em}(\textup{J}_1,\mathbb{L}^1)\hspace{0.3em}} 
\Big(
    [Y^{\infty,i}],[Z^{\infty,i}\cdot X^{\infty,i,\circ} + U^{\infty,i} \star \widetilde{\mu}^{{\infty,i,\natural}}],
    0,[Y^{\infty,i},X^{\infty,i,\circ}],[Y^{\infty,i},X^{\infty,1,\natural}],0\Big)
\end{multlined} \end{align}
and 
\begin{align}\label{MVStab:con.3}\begin{multlined}[0.9\textwidth]
\Big(
    \langle Y^{k,i} \rangle,
    \langle Z^{k,i}\cdot X^{k,i,\circ} + U^{k,i} \star \widetilde{\mu}^{{k,i,\natural}}\rangle,
    \langle M^{k,i}\rangle, 
    \langle Y^{k,i}, X^{k,i,\circ} \rangle, 
    \langle Y^{k,i}, X^{k,i,\natural} \rangle, 
    \langle Y^{k,i}, M^{k,i} \rangle 
    \Big)\\
\xrightarrow[k \rightarrow \infty]{\hspace{0.3em}(\textup{J}_1,\mathbb{L}^1)\hspace{0.3em}} 
\Big(
    \langle Y^{\infty,i} \rangle,
    \langle Z^{\infty,i}\cdot X^{\infty,i,\circ} + U^{\infty,i} \star \widetilde{\mu}^{{\infty,i,\natural}}\rangle,
    0,
    \langle Y^{\infty,i}, X^{\infty,i,\circ} \rangle, 
    \langle Y^{\infty,i}, X^{\infty,i,\natural} \rangle, 
    0\Big),
\end{multlined} \end{align}
where in \eqref{MVStab:con.1} the state space is $\mathbb{R}^{d\times 3}$, while in the other two cases the state space is $\mathbb{R}^{d\times (4d+p+n)}$.  
\end{theorem}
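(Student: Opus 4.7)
The strategy is to reduce the statement to the stability result for standard (non-mean-field) BSDEs proved in \citet{papapantoleon2023stability}, via a Picard iteration that freezes the marginal law appearing in the generator. For every $k \in \overline{\mathbb{N}}$ and $i \in \mathbb{N}$, set $Y^{k,i,(0)} \equiv 0$ and inductively define $\mathscr{S}^{k,i,(m+1)} := (Y^{k,i,(m+1)}, Z^{k,i,(m+1)}, U^{k,i,(m+1)}, M^{k,i,(m+1)})$ as the unique solution of the classical BSDE obtained from \eqref{MVBSDE} by replacing the law flow $\{\mathcal{L}(Y^{k,i}_s)\}_s$ by the deterministic flow $\{\mathcal{L}(Y^{k,i,(m)}_s)\}_s$; well-posedness at each step is ensured by the framework of \citet{papapantoleon2018existence}.

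The contraction step rests on the Lipschitz condition on $f^k$ in its measure argument from \ref{H4}, the a priori estimate underlying \cite[Theorem~4.8]{PST_BackPropagChaos}, and the synchronous coupling bound for the squared Wasserstein distance. Together these yield a contraction of the form
\begin{align*}
\bigl\|\mathscr{S}^{k,i,(m+1)} - \mathscr{S}^{k,i}\bigr\|_{\star,\hat{\beta},k,i}^{2}
\leq \kappa^k\, \bigl\|Y^{k,i,(m)} - Y^{k,i}\bigr\|_{\mathscr{S}^{2,k,i}_{\hat{\beta}}(\mathbb{R}^d)}^{2},
\end{align*}
with contraction constant $\kappa^k$ proportional to $\widetilde{M}^{\Phi^k}(\hat{\beta})$. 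By \ref{S9}.\ref{S9.ii}, namely \eqref{ineq:Mtilde_smaller_quarter}, the factor $\kappa^k$ can be arranged to remain strictly below $1$ uniformly in $k \in \overline{\mathbb{N}}$; the iteration therefore contracts geometrically at a rate independent of $k$, which is precisely the $m$-uniformity required for the Moore--Osgood step.

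For each fixed $m$, $\mathscr{S}^{k,i,(m+1)}$ satisfies a standard BSDE whose data fit into the stability framework of \citet{papapantoleon2023stability}: the weak convergence $\mathbb{F}^{k,i} \xrightarrow{\textup{w}} \mathbb{F}^{\infty,i}$ recorded in \eqref{weak_conv_Filtrations}, the martingale convergence \ref{S2} with its consequences for $C^k$ and $C^k_{\infty}$ discussed after \eqref{weak_conv_Filtrations}, the terminal convergence \ref{S5}, the uniform integrability \ref{S6}, the bound \ref{S7}, the generator continuity \ref{S8}, and the time-horizon convergence \ref{S10}. One then proves by induction on $m$ that $\mathscr{S}^{k,i,(m)} \to \mathscr{S}^{\infty,i,(m)}$ as $k \to \infty$ in all three senses appearing in \eqref{MVStab:con.1}--\eqref{MVStab:con.3}: the inductive hypothesis supplies convergence of the marginal laws of $Y^{k,i,(m)}$ which, combined with clause (b) of \ref{S8}, triggers the generator convergence needed to apply the standard-BSDE stability theorem to the $(m+1)$-th iterate. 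Invoking the Moore--Osgood theorem on the doubly-indexed sequence $\{\mathscr{S}^{k,i,(m)}\}_{k,m}$, with uniform-in-$k$ convergence $\mathscr{S}^{k,i,(m)} \to \mathscr{S}^{k,i}$ as $m \to \infty$ from the contraction above and pointwise-in-$m$ convergence $\mathscr{S}^{k,i,(m)} \to \mathscr{S}^{\infty,i,(m)}$ as $k \to \infty$ from the induction, delivers \eqref{MVStab:con.1}; the companion convergences \eqref{MVStab:con.2} and \eqref{MVStab:con.3} of optional and predictable covariations then follow from \eqref{MVStab:con.1} by classical results on stability of quadratic covariation, see for instance \citet{memin2003stability}.

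The chief obstacle is verifying that the Wasserstein convergence of the flow of marginals produced by the induction is strong enough to activate clause (b) of \ref{S8} pathwise in the $J_1$-topology: one must select a c\`adl\`ag representative of $s \mapsto \mathcal{L}(Y^{k,i,(m)}_s)$ in the Skorokhod space $\mathbb{D}(\mathscr{P}_2(\mathbb{R}^d), \mathcal{W}_{2,|\cdot|})$ and verify its $J_1$-convergence to the corresponding limit as $k \to \infty$, leveraging \ref{S1} and the quasi-left-continuity of $\overline{X}^{\infty,i}$. A secondary technical point is checking that the frozen generators $\tilde{f}^{k,(m)}(s,y,z,u) := f^k(s,y,z,u,\mathcal{L}(Y^{k,i,(m)}_s))$ inherit, uniformly in $m$, the integrability and continuity properties required by the standard-BSDE stability theorem, so that the Moore--Osgood exchange of limits actually closes.
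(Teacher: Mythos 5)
Your overall architecture --- a Picard scheme, a contraction uniform in $k$ coming from $\widetilde{M}^{\Phi^k}(\hat{\beta})<\tfrac14$, vertical limits via the stability machinery of \citet{papapantoleon2023stability}, and a Moore--Osgood exchange of limits --- coincides with the paper's. The difference is in which arguments of the generator you freeze: you freeze only the measure, so that each iterate $\mathscr{S}^{k,i,(m+1)}$ is itself the solution of a genuine BSDE, whereas the paper freezes the whole quadruple $(Y,Z,U,\mathcal{L}(Y))$, so that each iterate is produced by a single conditional-expectation/martingale-representation step. This difference is not cosmetic, and it is exactly where your argument breaks.

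The gap is in the vertical step. Assumption \ref{S8}(b) is a \emph{coupled} continuity condition: it gives convergence of $f^k\big(\cdot,a^k_\cdot,Z_\cdot,\Gamma^{k,i}(U)_\cdot,\mathcal{L}(a^k_\cdot)\big)$ only when the second argument and the measure argument are built from the \emph{same} converging sequence of processes $a^k$. Along the paper's full Picard scheme both slots carry the previous iterate $Y^{k,i,(q)}$, so \ref{S8}(b) applies verbatim. Along your partial scheme the generator of the $(m+1)$-th BSDE is evaluated at $\big(Y^{k,i,(m+1)}_s,\dots,\mathcal{L}(Y^{k,i,(m)}_s)\big)$ --- two different processes --- and, worse, to invoke the standard-BSDE stability theorem of \citet{papapantoleon2023stability} as a black box you need the frozen generator $\tilde f^{k,(m)}(s,y,z,u):=f^k\big(s,y,z,u,\mathcal{L}(Y^{k,i,(m)}_s)\big)$ to satisfy the generator-continuity hypothesis for \emph{arbitrary} converging test processes $a^k$, i.e.\ a decoupled joint continuity in the pair (process, measure flow). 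Neither \ref{S8}(b) nor the Lipschitz bound \ref{H4} yields this: the Lipschitz estimate in the measure variable only bounds the discrepancy by $\sqrt{\vartheta^{k,*}}\,W_{2,|\cdot|}\big(\mathcal{L}(Y^{k,i,(m)}_\cdot),\mathcal{L}(a^k_\cdot)\big)$, which neither vanishes nor converges to the correct limit expression. You flag this as ``the chief obstacle'', but under the standing assumptions it cannot be overcome --- it would require strengthening \ref{S8}. The fix is to use the full Picard iteration, along which the coupled form of \ref{S8}(b) is exactly what is needed; your uniform contraction estimate survives this change essentially unchanged. A secondary understatement: \eqref{MVStab:con.2}--\eqref{MVStab:con.3} do not ``follow by classical results'' from \eqref{MVStab:con.1} alone; one needs the P-UT property finally uniformly in $k$, Lenglart domination and uniform integrability of the brackets at infinity before \citet{memin2003stability} and \cite[Theorem VI.6.26]{jacod2013limit} can be applied.
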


\begin{proof}
We will treat the case $i=1$, for notational simplicity.
We will follow the same strategy as in \citet{papapantoleon2023stability}, which is described schematically in \cref{table:McKean--Vlasovscheme}. 
In other words, we will apply the Moore--Osgood theorem on the doubly-indexed sequence 
$\left\{\mathscr{S}^{k,1,(q)}\right\}_{(k,q)\in \mathbb{N} \times \mathbb{N}}$ of the Picard approximation scheme, where $\mathscr{S}^{k,1,(q)}$ denotes the representation obtained at the $q-$th step of the Picard iteration under the data $\mathscr{D}^k$.

\begin{table}[ht]
{\centering 
\begin{tabular}{c|ccccccc}
$\mathscr{D}^1$ 	&{$\mathscr{S}^{1,1,(0)}$} 			&{$\mathscr{S}^{1,1,(1)}$} 			&{$\mathscr{S}^{1,1,(2)}$}		
 					&{$\cdots$}							&{$\mathscr{S}^{1,1,(q)}$}			&{$\xrightarrow{\ q\to\infty\ }$} 	&{$\mathscr{S}^{1,1}$}\\[0.3cm]
$\mathscr{D}^2$ 	&{$\mathscr{S}^{2,1,(0)}$} 			&{$\mathscr{S}^{2,1,(1)}$} 			&{$\mathscr{S}^{2,1,(2)}$}
 					&{$\cdots$}							&{$\mathscr{S}^{2,1,(q)}$}			&{$\xrightarrow{\ q\to\infty\ }$}	&{$\mathscr{S}^{2,1}$}\\[0.3cm]
$\mathscr{D}^3$ 	&{$\mathscr{S}^{3,1,(0)}$} 			&{$\mathscr{S}^{3,1,(1)}$} 			&{$\mathscr{S}^{3,1,(2)}$}
					&{$\cdots$}							&{$\mathscr{S}^{3,1,(q)}$}			&{$\xrightarrow{\ q\to\infty\ }$}	&{$\mathscr{S}^{3,1}$}\\[0.1cm]
$\vdots$			&{$\vdots$} 						&{$\vdots$} 						&{$\vdots$} 						& 
					&{$\vdots$}							& 												&{$\vdots$} \\[0.2cm]
$\mathscr{D}^k$ 	&{$\mathscr{S}^{k,1,(0)}$} 			&{$\mathscr{S}^{k,1,(1)}$} 			&{$\mathscr{S}^{k,1,(2)}$}			&{$\cdots$}
					&{$\mathscr{S}^{k,1,(q)}$}			&{$\xrightarrow{\ q\to\infty\ }$}	&{$\mathscr{S}^{k,1}$}\\
$\big\downarrow$	&\rotatebox[origin=c]{270}{$\dashrightarrow{}$} 	&\rotatebox[origin=c]{270}{$\dashrightarrow{}$}  	&\rotatebox[origin=c]{270}{$\dashrightarrow{}$}  	&  		
					&\rotatebox[origin=c]{270}{$\dashrightarrow{}$}  	&									& \rotatebox[origin=c]{270}{\Large${\rightsquigarrow}$} \\[0.2cm]
$\mathscr{D}^\infty$&{$\mathscr{S}^{\infty,1,(0)}$} 		&{$\mathscr{S}^{\infty,(1)}$} 		&{$\mathscr{S}^{\infty,1,(2)}$}		&{$\cdots$}
					&{$\mathscr{S}^{\infty,1,(q)}$}		&{$\xrightarrow{\ q\to\infty\ }$}	&{$\mathscr{S}^{\infty,1}$}\\[0.2cm]
\end{tabular}\\}
\caption{The doubly-indexed Picard scheme for the stability of McKean--Vlasov BSDEs.}
\label{table:McKean--Vlasovscheme}
\end{table}

Using \cref{prop:uniform_Picard_MVSDE}, we obtain the uniform (over $k\in\overline{\mathbb{N}}$) horizontal convergence; in \cref{table:McKean--Vlasovscheme} this corresponds to the horizontal, right arrows denoting the convergence of the Picard schemes as $q\to\infty$.
In order to apply the Moore--Osgood theorem, it suffices to prove that the convergence indicated by the dashed, vertical down arrows in \cref{table:McKean--Vlasovscheme} is indeed true, for every $q \in \mathbb{N}$, as $k\to\infty$. 
In order to avoid the redundant repetition of arguments and results presented in \cite{papapantoleon2023stability}, we will describe only the necessary modifications wherever needed; these are presented in \cref{lem:MVBSDE_Picard_Induction} in the next section.
\end{proof}

%
%
%
%
%
%
%
%
%
%
%
%

The main theorem of the present work follows. 

\begin{theorem}[Stability of backward propagation of chaos]\label{Prop. of Chaos Stability}
Consider a sequence of data $\{\mathscr{D}^k\}_{k\in\overline{\mathbb{N}}}$ as described in \eqref{def:seq_standard_data}, which satisfy \ref{H1}--\ref{H:prop_contraction} for every $k\in\overline{\mathbb{N}}$, and assume that \ref{S1}--\ref{S10} are in force.
Then, the following are true for every $i\in\mathbb{N}$:
\begin{align}\label{conv:Stab_Prop_1}
\begin{multlined}[0.9\textwidth]
      \left(Y^{k,i,N}, Z^{k,i,N} \cdot X^{k,i,\circ} + U^{k,i,N} \star \widetilde{\mu}^{{k,i,\natural}},M^{k,i,N}\right)  \\
\xrightarrow[(k,N)\rightarrow (\infty,\infty)]
{\hspace{0.3em}(\textup{J}_1,\mathbb{L}^2)\hspace{0.3em{}}} 
      \left(Y^{\infty,i}, Z^{\infty,i} \cdot X^{\infty,i,\circ} + U^{\infty,i} \star \widetilde{\mu}^{{\infty,i,\natural}},0\right),
  \end{multlined}
\end{align}
\begin{align}\label{conv:Stab_Prop_2}\begin{multlined}[0.9\textwidth]
\Big(
    [Y^{k,i,N}],[Z^{k,i,N}\cdot X^{k,i,\circ} + U^{k,i,N} \star \widetilde{\mu}^{{k,i,\natural}}],
    [M^{k,i,N}],
    [Y^{k,i,N},X^{k,i,\circ}],[Y^{k,i,N},X^{k,i,\natural}],[Y^{k,i,N},M^{k,i,N}]\Big) \\
\xrightarrow[(k,N)\rightarrow (\infty,\infty)]{\hspace{0.3em}(\textup{J}_1,\mathbb{L}^1)\hspace{0.3em}}
\Big(
    [Y^{\infty,i}],[Z^{\infty,i}\cdot X^{\infty,i,\circ} + U^{\infty,i} \star \widetilde{\mu}^{{\infty,i,\natural}}],
    0,[Y^{\infty,i},X^{\infty,i,\circ}],[Y^{\infty,i},X^{\infty,1,\natural}],0\Big)
\end{multlined} \end{align}
and 
\begin{align}\label{conv:Stab_Prop_3}\begin{multlined}[0.9\textwidth]
\Big(
    \langle Y^{k,i,N} \rangle,
    \langle Z^{k,i,N}\cdot X^{k,i,\circ} + U^{k,i,N} \star \widetilde{\mu}^{{k,i,\natural}}\rangle,
    \langle M^{k,i,N}\rangle, 
    \langle Y^{k,i,N}, X^{k,i,\circ} \rangle, 
    \langle Y^{k,i,N}, X^{k,i,\natural} \rangle, 
    \langle Y^{k,i,N}, M^{k,i,N} \rangle 
    \Big)\\
\xrightarrow[k \rightarrow \infty]{\hspace{0.3em}(\textup{J}_1,\mathbb{L}^1)\hspace{0.3em}} 
\Big(
    \langle Y^{\infty,i} \rangle,
    \langle Z^{\infty,i}\cdot X^{\infty,i,\circ} + U^{\infty,i} \star \widetilde{\mu}^{{\infty,i,\natural}}\rangle,
    0,
    \langle Y^{\infty,i}, X^{\infty,i,\circ} \rangle, 
    \langle Y^{\infty,i}, X^{\infty,i,\natural} \rangle, 
    0\Big),
\end{multlined} \end{align}
where in \eqref{conv:Stab_Prop_1} the state space is $\mathbb{R}^{d\times 3}$, while in the other two the state space is $\mathbb{R}^{d\times (4d+p+n)}$.  
\end{theorem}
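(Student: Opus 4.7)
The plan is to establish the diagonal convergence of Theorem 4.2 by a two-sided application of the Moore--Osgood theorem to the doubly-indexed scheme depicted in \cref{table:propagation scheme}. The two ingredients are: (i) for each fixed data set $\mathscr{D}^k$ with $k\in\overline{\mathbb{N}}$, the backward propagation of chaos from \cite[Theorem 6.3]{PST_BackPropagChaos} yields the horizontal convergence $\mathscr{S}^{k,i,N}\to\mathscr{S}^{k,i}$ as $N\to\infty$; and (ii) \cref{McKean--Vlasov Stability}, already established, yields the vertical convergence $\mathscr{S}^{k,i}\to\mathscr{S}^{\infty,i}$ as $k\to\infty$, jointly with the corresponding convergences of the optional and predictable quadratic covariations in $(\textup{J}_1,\mathbb{L}^1)$. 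Moore--Osgood then delivers the desired diagonal convergence, provided one of the two families of one-parameter convergences is uniform in the other index.

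The first step, and the main obstacle, is to upgrade the horizontal backward propagation of chaos to hold uniformly in $k\in\overline{\mathbb{N}}$. The rate in \cite[Theorem 6.3]{PST_BackPropagChaos} is expressed through universal constants times a sum of three quantities: the error of the terminal conditions, a remainder involving the driver evaluated at zero, and the Wasserstein rate of the empirical measure of the McKean--Vlasov solutions. Assumption \ref{S4} in its equivalent uniform form makes the first quantity vanish uniformly in $k$ as $N\to\infty$. Assumption \ref{S6}, combined with the uniform bound on $A^{k}_\infty$ from \ref{S7} and the bound $\mathcal{E}(\hat\beta A^k)_{\cdot-}\leq \mathrm{e}^{\hat\beta A^k}$, controls the second quantity uniformly; for the third, one invokes the standard Fournier--Guillin rate on $\mathcal{W}_{2,|\cdot|}$ applied to the sequence $\{\mathcal{L}(Y^{k,i})\}_{k\in\overline{\mathbb{N}}}$, whose uniform second moment follows from the a priori estimates in \cite{PST_BackPropagChaos} together with \ref{S6}--\ref{S7}. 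Finally, \ref{S9} together with \eqref{ineq:Mtilde_smaller_quarter} keeps the contraction constants $\widetilde M^{\Phi^k}(\hat\beta)$ bounded away from $1$ uniformly in $k$, so that the implicit constant in the propagation-of-chaos estimate remains bounded.

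Given this uniform horizontal convergence, Moore--Osgood applied to the norm $\|\cdot\|_{\star,\hat\beta}$ yields
\begin{equation*}
\bigl(Y^{k,i,N},Z^{k,i,N}\cdot X^{k,i,\circ}+U^{k,i,N}\star\widetilde\mu^{k,i,\natural},M^{k,i,N}\bigr)\xrightarrow[(k,N)\to(\infty,\infty)]{} \bigl(Y^{\infty,i},Z^{\infty,i}\cdot X^{\infty,i,\circ}+U^{\infty,i}\star\widetilde\mu^{\infty,i,\natural},0\bigr)
\end{equation*}
in the $\mathscr{S}^{2}\times\mathcal{H}^{2}\times\mathcal{H}^{2}$-topology. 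Passing from this mode to the $(\textup{J}_1,\mathbb{L}^2)$ mode claimed in \eqref{conv:Stab_Prop_1} is obtained by combining the Skorokhod estimate \eqref{skorokineq}, the $\mathcal{H}^2$-bound on the martingale parts, and the $(\textup{J}_1,\mathbb{L}^2)$-convergence of the limiting McKean--Vlasov solution supplied by \cref{MVStab:con.1}; the quasi-left-continuity of $X^{\infty,i,\natural}$ in \ref{S1} ensures that the limiting trajectories are $\textup{J}_1$-regular.

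The convergences \eqref{conv:Stab_Prop_2} and \eqref{conv:Stab_Prop_3} of the optional and predictable covariations follow from the above $(\textup{J}_1,\mathbb{L}^2)$-convergence of the semimartingales together with the principle that $\mathcal{H}^2$-convergence of martingales entails $(\textup{J}_1,\mathbb{L}^1)$-convergence of their quadratic variations and predictable quadratic variations, extended to cross-brackets by polarisation; the vanishing limiting terms reflect that the orthogonal parts $M^{\infty,i}$ collapse to zero by \ref{S1} and the predictable representation property. This is essentially identical to the corresponding step in \cite{papapantoleon2023stability}, where the same machinery is used to pass from solutions to their brackets, and the details can be reproduced \emph{mutatis mutandis}.
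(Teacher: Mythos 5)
Your overall architecture matches the paper's: Moore--Osgood on the doubly-indexed family $\{\mathscr{S}^{k,i,N}\}$, with the horizontal limits supplied by \cite[Theorem~6.3]{PST_BackPropagChaos}, the vertical limit by \cref{McKean--Vlasov Stability}, uniformity obtained in the horizontal direction, and the bracket convergences handled as in \cite[Sections~3.3.2, 3.3.3]{papapantoleon2023stability}. The gap is in the one step that carries the real weight, namely the uniform-in-$k$ control of the Wasserstein term. You propose to ``invoke the standard Fournier--Guillin rate on $\mathcal{W}_{2,|\cdot|}$'' for the family $\{\mathcal{L}(Y^{k,i}_s)\}$ using only a \emph{uniform second moment bound}. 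That theorem does not apply here: the Fournier--Guillin rate for $W_p$ requires finite moments of order $q>p$ (in fact $q>2p$ for the clean rate), and at the endpoint $q=p=2$ there is no rate and, more importantly, no uniformity over a family of laws from mere boundedness of second moments. What is actually needed — and what the paper proves — is the uniform integrability over $k$ of $\sup_{s}|Y^{k,i}_s|^2$ (\cref{lemma 2.2}, obtained by running the uniform integrability through the Picard scheme and then using the uniform \emph{a priori} estimates of \cref{prop:uniform_Picard_MVSDE}); this is then fed into a hand-crafted truncation argument that reproduces the Fournier--Guillin dyadic decomposition at the $\mathbb{L}^2$ endpoint and yields
\begin{align*}
\lim_{N\to\infty}\ \sup_{k\in\overline{\mathbb{N}},\,s\in\mathbb{R}_+}
\mathbb{E}\Big[W_{2,|\cdot|}^2\big(L^N(\widetilde{\mathbf{Y}}^{k,N}_s),\mathcal{L}(Y^{k,1}_s)\big)\Big]=0
\end{align*}
(\cref{lemma:convergence_tildeYkn}). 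Note also that the supremum must be taken over the time variable $s$ as well as over $k$, since the Wasserstein error enters the propagation-of-chaos estimate through an integral $\int_0^\infty(\cdot)\,\ud\mathcal{E}(\hat\beta A^k)_s$; your proposal indexes the family only by $k$ and does not address this.

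A second, more minor omission: for \eqref{conv:Stab_Prop_2} and \eqref{conv:Stab_Prop_3} you treat only the martingale parts, but $Y^{k,i,N}$ contains the finite-variation component $L^{k,i,N}=\int_0^\cdot f^k(\ldots)\,\ud C^k$, whose contribution to the brackets must be shown to vanish or converge. This is the content of \cref{Corollary 5.7} (convergence of the total variations and of $[L^{k,i,N}]$, plus the uniform integrability and $\mathbb{L}^2$-convergence of the terminal martingale values), without which the P-UT/polarisation machinery from \cite{papapantoleon2023stability} cannot be launched. With \cref{lemma 2.2}, \cref{lemma:convergence_tildeYkn}, \cref{Unif. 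Prop. Chaos} and \cref{Corollary 5.7} in place, your argument closes; without them, the claimed uniformity is unsupported.
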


\begin{proof}
Let us consider the case $i=1$ again, for notational simplicity.
The main argument will be (again) the application of the Moore--Osgood theorem for a doubly-indexed sequence, namely $\{\mathscr{S}^{k,1,N}\}_{(k,N)\in {\mathbb{N}}\times \mathbb{N}}$.
Analogously to the proof of the previous theorem, we first need to prove the uniform (over $k$) backward propagation of chaos, which is provided by \cref{Unif. Prop. Chaos}.
The existence of the iterated limit 
\begin{align*}
    \lim_{k\to\infty}\lim_{N\to\infty} \mathscr{S}^{k,1,N} = \mathscr{S}^{\infty,1}
\end{align*}
is guaranteed by \citet[Theorem 6.3]{PST_BackPropagChaos} and \cref{McKean--Vlasov Stability}. 
Then, using \cref{Corollary 5.7}, we can conclude with the same arguments as in \cite[Sections 3.3.2, 3.3.3]{papapantoleon2023stability}.
\end{proof}
As an immediate corollary of the above result we can get the stability of backward propagation of chaos expressed in distributional form.


\begin{corollary}
Let $(k,i,N) \in \overline{\mathbb{N}}\times \mathbb{N} \times \mathbb{N}$ and define
\begin{gather*}
    \theta^{k,i,N} := \textup{Law}\Big(
    \big(Y^{k,i,N}, Z^{k,i,N} \cdot X^{k,i,\circ} + U^{k,i,N} \star\widetilde{\mu}^{{k,i,\natural}},M^{k,i,N}\big)
    \Big)
\shortintertext{and}
    \theta^{\infty,i} :=\textup{Law}\Big(
    \big(Y^{\infty,i}, Z^{\infty,i} \cdot X^{\infty,i,\circ} + U^{\infty,i} \star\widetilde{\mu}^{{\infty,i,\natural}},0\big)
    \Big).
\end{gather*}
Then, for every $i \in \mathbb{N}$, we have
\begin{align}
W^2_{2,\rho_{\textup{J}_1^{d \times 3}},}\big(\theta^{k,i,N},\theta^{\infty,i}\big) \xrightarrow[(k,N)\rightarrow (\infty,\infty)]{|\cdot|} 0.
\end{align}
\end{corollary}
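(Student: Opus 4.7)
The plan is to derive this corollary as an immediate consequence of the $(\textup{J}_1,\mathbb{L}^2)$-convergence \eqref{conv:Stab_Prop_1} established in \cref{Prop. of Chaos Stability}, by means of the canonical synchronous coupling of $\theta^{k,i,N}$ and $\theta^{\infty,i}$ on the underlying probability space $(\Omega,\mathcal{G},\mathbb{P})$. First, I would note that the path space $\mathbb{D}^{d\times 3}$ endowed with $\rho_{\textup{J}_1^{d\times 3}}$ is Polish and that the universal bound $\rho_{\textup{J}_1^{d\times 3}}(\cdot,\cdot)\leq 1$ furnished by \eqref{skorokineq} ensures that every Borel probability measure on $\mathbb{D}^{d\times 3}$ lies in $\mathscr{P}_2(\mathbb{D}^{d\times 3})$; in particular, the Wasserstein distance appearing in the statement is automatically well-defined and finite for all $(k,i,N)$.

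Next, viewing the triples
\[
\mathscr{T}^{k,i,N}:=\bigl(Y^{k,i,N},\, Z^{k,i,N}\cdot X^{k,i,\circ}+U^{k,i,N}\star\widetilde{\mu}^{{k,i,\natural}},\, M^{k,i,N}\bigr)
\]
and $\mathscr{T}^{\infty,i}:=\bigl(Y^{\infty,i},\, Z^{\infty,i}\cdot X^{\infty,i,\circ}+U^{\infty,i}\star\widetilde{\mu}^{{\infty,i,\natural}},\, 0\bigr)$ as $\bigl(\mathcal{G}/\mathcal{B}_{\rho_{\textup{J}_1^{d\times 3}}}(\mathbb{D}^{d\times 3})\bigr)$-measurable random variables (cf.\ \cref{rem:Sko}), I would define $\pi^{k,i,N}$ as the pushforward of $\mathbb{P}$ under the map $\omega\mapsto(\mathscr{T}^{k,i,N}(\omega),\mathscr{T}^{\infty,i}(\omega))$. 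Its marginals are manifestly $\theta^{k,i,N}$ and $\theta^{\infty,i}$, so $\pi^{k,i,N}\in\Pi(\theta^{k,i,N},\theta^{\infty,i})$. Substituting this coupling into the variational definition \eqref{def:wasserstein} of the Wasserstein distance yields
\[
W^2_{2,\rho_{\textup{J}_1^{d\times 3}}}\bigl(\theta^{k,i,N},\theta^{\infty,i}\bigr)\leq \mathbb{E}\Bigl[\rho_{\textup{J}_1^{d\times 3}}\bigl(\mathscr{T}^{k,i,N},\mathscr{T}^{\infty,i}\bigr)^2\Bigr],
\]
and the right-hand side is precisely the quantity that tends to $0$ as $(k,N)\to(\infty,\infty)$ by virtue of the $(\textup{J}_1,\mathbb{L}^2)$ part of \eqref{conv:Stab_Prop_1}.

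There is no genuine obstacle at this last step: the corollary is nothing more than the distributional reformulation, through a synchronous coupling, of the pathwise $\mathbb{L}^2$-convergence already secured by \cref{Prop. of Chaos Stability}. The only bookkeeping points worth checking explicitly are the Skorokhod-measurability of the triples, which is provided by \cref{rem:Sko} and legitimises the formation of the pushforward, and the fact that the marginals of $\pi^{k,i,N}$ coincide with $\theta^{k,i,N}$ and $\theta^{\infty,i}$, which is immediate from the definition of $\pi^{k,i,N}$.
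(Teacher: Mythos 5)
Your proof is correct and follows essentially the same route as the paper: the paper likewise takes the synchronous coupling, i.e.\ the pushforward of $\mathbb{P}$ under $\omega\mapsto(Q^{k,i,N}(\omega),Q^{\infty,i}(\omega))$, bounds the Wasserstein distance by $\mathbb{E}\big[\rho^2_{\textup{J}_1^{d\times 3}}(Q^{k,i,N},Q^{\infty,i})\big]$, and concludes from convergence \eqref{conv:Stab_Prop_1}. Your additional observations on measurability and on the boundedness of $\rho_{\textup{J}_1^{d\times 3}}$ guaranteeing membership in $\mathscr{P}_2(\mathbb{D}^{d\times 3})$ are correct bookkeeping that the paper leaves implicit.
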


\begin{proof}
Let us define, for notational simplicity, 
\begin{align*}
    Q^{k,i,N}:= (Y^{k,i,N}, Z^{k,i,N} \cdot X^{k,i,\circ} + U^{k,i,N} \star \widetilde{\mu}^{{k,i,\natural}},M^{k,i,N})
\end{align*}
and 
\begin{align*}
    Q^{k,i}:= (Y^{k,i}, Z^{k,i} \cdot X^{k,i,\circ} 
        + U^{k,i} \star\widetilde{\mu}^{{k,i,\natural}},M^{k,i}),
\end{align*}
for every $(k,i,N) \in \overline{\mathbb{N}}\times \mathbb{N} \times \mathbb{N}$.
Then, from the definition of the Wasserstein distance, we get
    \begin{align*} 
    W^2_{2,\rho_{\textup{J}_1^{d \times3}}}\big(\theta^{k,i,N},\theta^{\infty,i}\big) 
    \leq \int_{\mathbb{D}^{d\times 3}\times \mathbb{D}^{d\times 3}} \rho_{\textup{J}_1^{d \times 3}}^2(x,z)\pi(\ud x,\ud z)
    \leq \mathbb{E}\Big[\rho^2_{\textup{J}_1^{d \times 3}}\big(Q^{k,i,N},Q^{\infty,i}\big)\Big],
    \end{align*}
where we chose $\pi$ to be the image measure on $\mathbb{D}^{d \times 3} \times \mathbb{D}^{d \times 3}$ produced by the measurable function
\begin{align*}
(Q^{k,i,N}, Q^{\infty,i}): \Omega \longrightarrow \mathbb{D}^{d \times 3} \times \mathbb{D}^{d\times 3}.
\end{align*}
Hence, the result follows directly from convergence \eqref{conv:Stab_Prop_1}, see \cref{Prop. of Chaos Stability}. 
\end{proof}

As pointed out earlier, we cannot deduce the stability of mean-field BSDE systems under Assumptions \ref{S1}--\ref{S10} from the previous theorem, for any $N\in\mathbb{N}$; instead, we can only deduce the following result
\begin{align*}
    \lim_{N\to\infty}\limsup_{k\to\infty} \big\| \mathscr{S}^{k,1,N} - \mathscr{S}^{\infty,i,N} \big\|_{\mathcal{S}^2} = 0,
\end{align*}
which can be interpreted as an \emph{asymptotic} stability of mean-field BSDE systems. 
Let us conclude this section with a theorem that provides the stability of mean-field BSDE systems.
To this end, let us fix the number of players $N\in\mathbb{N}$ until the end of this section. 
We need to modify Assumptions \ref{S5} and \ref{S8} as follows:
%
%
\begin{enumerate}[label=\textup{\textbf{(S\arabic*${}^{\prime}$)}}]

\setcounter{enumi}{4}
\item\label{S5'} For every $i \in \mathscr{N}$, we have
\begin{align*}
    \xi^{k,i,N} \xrightarrow[k \rightarrow \infty]{\mathbb{L}^2(\Omega,\mathcal{G},\mathbb{P},\mathbb{R}^d)} \xi^{\infty,i,N}.
\end{align*}
\setcounter{enumi}{7}
\item\label{S8'}  The generators $\{f^k\}_{k \in \overline{\mathbb{N}}}$ possess the following properties:
\begin{enumerate}
    \item For every $k \in \overline{\mathbb{N}}$,
    $i \in \mathscr{N}, a \in \mathbb{D}(\mathbb{R}^d), Z \in D^{\circ,d\times p}, U \in D^{\natural}, \mu \in \mathbb{D}(\mathscr{P}_2(\mathbb{R}^d),W_{2,|\cdot|})$, it holds that 
    \begin{align*}
f^k(\cdot,a_\cdot,Z_\cdot,\Gamma^{k,(1,\dots,N)}(U)_\cdot, \mu_\cdot) \in \mathbb{D}(\mathbb{R}^d).
    \end{align*}
    
    \item For every 
    $Z \in D^{\circ,d\times p}$,
    $ U \in D^{\natural}$ and a sequence of $\mathbb{R}^{d\times N}-$valued c\`adl\`ag stochastic processes $\{\textbf{a}^{k}\}_{k\in\overline{\mathbb{N}}}$, where $\textbf{a}^{k} := (a^{k,1},\dots,a^{k,N})$ 
    such that $\mathbb{E}\big[\sup_{t \in \mathbb{R}_+}|\textbf{a}^{k}_t|^2\big] < \infty$ for every $k\in \overline{\mathbb{N}}$, if $\textbf{a}^{k} \xrightarrow[k \rightarrow \infty]{\textup{J}_1(\mathbb{R}^{d\times N})}\textbf{a}^{\infty},\hspace{0.1cm} \mathbb{P}-a.s.$, 
    then, for every $i\in\mathscr{N}$, it holds    $\mathbb{P}-a.s.$ 
    \begin{align*}
    \hspace{5em}
    \begin{multlined}[c][0.85\textwidth]
f^k\big(\cdot,a_\cdot^{k,i},Z_\cdot,\Gamma^{k,(1,\dots,N)}(U)_\cdot, L^N(\textbf{a}^k_\cdot)\big) \\
    \xrightarrow[k \rightarrow \infty]{\hspace{0.3em}\textup{J}_1(\mathbb{R}^d)\hspace{0.3em}} 
f^{\infty}\big(\cdot,a_\cdot^{\infty,i},Z_\cdot,
    \Gamma^{(\mathbb{F}^{\infty,(1,\dots,N)},\overline{X}^{\infty,i},\Theta^{\infty})}(U)_\cdot, L^N(\textbf{a}^{\infty}_\cdot)\big).
    \end{multlined}  
    \end{align*}
Furthermore, if $\sup_{k \in \overline{\mathbb{N}}}\left\{\|a^{k,i}(\omega)\|_{\infty}\right\}_{k \in \overline{\mathbb{N}}} < \infty, \hspace{0.1cm} \mathbb{P}-a.s.$, then
    \begin{align*}
        \sup_{k \in \overline{\mathbb{N}}}\left\{\left\|\left(f^k(t,a_t^{k,i},Z_t,\Gamma^{k,(1,\dots,N)}(U)_t, L^N(\textbf{a}^k_t))\right)_{t\in \mathbb{R}_+}\right\|_{\infty}\right\}_{k \in \overline{\mathbb{N}}} < \infty, \hspace{0.1cm} \mathbb{P}-a.s.
    \end{align*}
\end{enumerate}
\end{enumerate}

\begin{remark}
\begin{enumerate}
    \item\label{remark 4.4 i} 
    The filtration $\mathbb{F}^{k,(1,\dots,N)}$ can be seen as the usual augmentation of the natural filtration of the square integrable martingale $\overline{X}^{k,(1,\dots,N)} := (X^{k,1,\circ},X^{k,1,\natural},\dots,X^{k,N,\circ},X^{k,N,\natural})$. 
    Then, $\overline{X}^{k,(1,\dots,N)}$ has independent increments with respect to $\mathbb{F}^{k,(1,\dots,N)}$, thus, using \citet[Proposition 2]{coquet2001weak} we get that 
 \begin{align*}
         \mathbb{F}^{k,(1,\dots,N)}\xrightarrow[k \rightarrow \infty]{\textup{w}} \mathbb{F}^{\infty,(1,\dots,N)},
    \end{align*}\
    \textit{i.e.} the filtrations converge weakly.
    \item Using \ref{remark 4.4 i} above, \ref{S1} and \citet[Section 1]{Wang_RemarksIndependentIncrem}, we have that for every $N \in \mathbb{N}$ the filtrations $\mathbb{F}^{\infty,(1,\dots,N)}$
    are quasi-left-continuous.
\end{enumerate}
\end{remark}
The next theorem provides the stability of mean-field BSDEs for $N$ players.


\begin{theorem}[Stability of mean-field systems of BSDEs]\label{Mean-field system stability}
Consider a sequence of data $\{\mathscr{D}^k\}_{k\in\overline{\mathbb{N}}}$ as described in \eqref{def:seq_standard_data}, which satisfy \ref{H1}--\ref{H:prop_contraction} for every $k\in\overline{\mathbb{N}}$, and assume that \ref{S1}--\ref{S4},\ref{S5'},\ref{S6}, \ref{S7}, \ref{S8'}, \ref{S9},\ref{S10} are in force, and also fix $N \in \mathbb{N}$.
Then, for every $i \in \mathscr{N}$, we have  
\begin{align}\label{conv:Stab_mf_1} \begin{multlined}[0.9\textwidth]
    \Big(Y^{k,i,N},Z^{k,i,N}\cdot X^{k,i,\circ} + U^{k,i,N} \star \widetilde{\mu}^{{k,i,\natural}},M^{k,i,N}\Big)\\  
    \xrightarrow[k \rightarrow \infty]{\left(\textup{J}_1,\mathbb{L}^2\right)}
    \Big(Y^{\infty,i,N},Z^{\infty,i,N}\cdot X^{\infty,i,\circ} + U^{\infty,i,N} \star \widetilde{\mu}^{{\infty,i,\natural}},0\Big),
\end{multlined} \end{align}
\begin{align}\label{conv:Stab_mf_2} \begin{multlined}[0.9\textwidth]
\Big([Y^{k,i,N}],[Z^{k,i,N}\cdot X^{k,i,\circ} + U^{k,i,N} \star \widetilde{\mu}^{{k,i,\natural}}],
[M^{k,i,N}],\\
[Y^{k,i,N},X^{k,i,\circ}],[Y^{k,i,N},X^{k,i,\natural}],[Y^{k,i,N},M^{k,i,N}]\Big) \\
\xrightarrow[k \rightarrow \infty]{\left(\textup{J}_1,\mathbb{L}^1\right)} 
\Big([Y^{\infty,i,N}],[Z^{\infty,i,N}\cdot X^{\infty,i,\circ} + U^{\infty,i,N} \star \widetilde{\mu}^{{\infty,i,\natural}}],0,\\
[Y^{\infty,i,N},X^{\infty,i,\circ}],[Y^{\infty,i,N},X^{\infty,i,\natural}],0\Big)
\end{multlined} 
\end{align}  
and 
\begin{align}\label{conv:Stab_mf_3} \begin{multlined}[0.9\textwidth]
\Big(\langle Y^{k,i,N} \rangle,
\langle Z^{k,i,N}\cdot X^{k,i,\circ} 
    + U^{k,i,N} \star\widetilde{\mu}^{{k,i,\natural}} \rangle,
\langle M^{k,i,N} \rangle,\\
\langle Y^{k,i,N},X^{k,i,\circ} \rangle,
\langle Y^{k,i,N},X^{k,i,\natural} \rangle,
\langle Y^{k,i,N},M^{k,i,N} \rangle\Big) \\
\xrightarrow[k \rightarrow \infty]{\left(\textup{J}_1,\mathbb{L}^1\right)} 
\Big(\langle Y^{\infty,i,N} \rangle,
\langle Z^{\infty,i,N}\cdot X^{\infty,i,\circ} + U^{\infty,i,N} \star \widetilde{\mu}^{{\infty,i,\natural}} \rangle,0,\\
\langle Y^{\infty,i,N},X^{\infty,i,\circ} \rangle,
\langle Y^{\infty,i,N},X^{\infty,i,\natural} \rangle,0\Big),
\end{multlined} 
\end{align}  
where in \eqref{conv:Stab_mf_1} the state space is $\mathbb{R}^{d\times 3}$, while in the other two the state space is $\mathbb{R}^{d\times (4d+p+n)}$.  
\end{theorem}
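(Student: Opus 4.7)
The plan is to mimic the structure used for \cref{McKean--Vlasov Stability}: introduce a doubly-indexed Picard iteration $\{\textbf{S}^{k,N,(q)}\}_{(k,q)\in\overline{\mathbb{N}}\times\mathbb{N}}$ for the coupled $N$-player mean-field system, and conclude via the Moore--Osgood theorem. At step $q+1$, given $\textbf{S}^{k,N,(q)}$, one would solve, for each $i\in\mathscr{N}$, the \emph{linear} BSDE on $\mathbb{F}^{k,(1,\dots,N)}$ with terminal condition $\xi^{k,i,N}$ and driver $f^k$ evaluated at $(Y^{k,i,N,(q)},Z^{k,i,N,(q)} c^k,\Gamma^{(\mathbb{F}^{k,(1,\dots,N)},\overline{X}^{k,i},\Theta^k)}(U^{k,i,N,(q)}),L^N(\textbf{Y}^{k,N,(q)}))$. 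This linear equation is solvable via the predictable representation property of $\overline{X}^{k,i}$ under the enlarged filtration; see \cref{Remark 3.1}.\ref{Remark 3.1 i}.

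For the horizontal convergence ($q\to\infty$), the contraction estimates underlying \cite[Theorem 4.13]{PST_BackPropagChaos} yield that this Picard map is a contraction on the norm $\|\cdot\|_{\star,N,\hat{\beta},k}$ of ratio uniformly bounded away from $1$, thanks to \eqref{ineq:Mtilde_smaller_quarter}. Hence $\|\textbf{S}^{k,N,(q)} - \textbf{S}^{k,N}\|_{\star,N,\hat\beta,k}$ vanishes geometrically with a $k$-independent rate. Combined with the uniform upper bound on $A^k_\infty$ from \ref{S7} and the uniform $\mathbb{L}^2$-control implied by \ref{S6} and \ref{S4}, this supplies the uniformity in $k$ required by Moore--Osgood.

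For the vertical convergence ($k\to\infty$) at each fixed $q$, I would proceed by induction. The base case $q=0$ is trivial. At the inductive step, $\textbf{S}^{k,N,(q+1)}$ solves $N$ linear BSDEs whose terminal conditions converge by \ref{S5'}, whose driving martingales converge by \ref{S2}, whose filtrations converge weakly (see \cref{remark 4.4 i}), and whose drivers $g^{k,i,(q)}$ converge in the Skorokhod sense by combining \ref{S8'}, the inductive hypothesis, and the $W_{2,|\cdot|}$-convergence of the empirical measures $L^N(\textbf{Y}^{k,N,(q)})$ obtained from \eqref{empiricalineq}. Invoking the stability results for standard linear BSDEs of \cite{papapantoleon2023stability} applied under $\mathbb{F}^{k,(1,\dots,N)}$, one obtains the $(\textup{J}_1,\mathbb{L}^2)$-convergence of the $(q+1)$-th iterate and thus closes the induction. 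The convergences of brackets \eqref{conv:Stab_mf_2} and of predictable covariations \eqref{conv:Stab_mf_3} then follow by the arguments of \cite[Sections 3.3.2, 3.3.3]{papapantoleon2023stability}.

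The hardest step is expected to be the vertical passage to the limit in the drivers, because $\Gamma^{(\mathbb{F}^{k,(1,\dots,N)},\overline{X}^{k,i},\Theta^k)}$ depends on the enlarged filtration through the kernel $K^{(\mathbb{F}^{k,(1,\dots,N)},\overline{X}^{k,i})}$ and the compensator $C^{(\mathbb{F}^{k,(1,\dots,N)},\overline{X}^{k,i})}$. Fortunately, by the immersion property (\cref{rem:crucial_remarks}.\ref{rem:immersion_no_change_in_comp}) together with the independence assumption in \ref{H1}, these coincide with the single-player objects $K^k$ and $C^k$, so the continuity of $\Gamma$ along the approximating sequence reduces to the single-player situation already treated in \cite{papapantoleon2023stability}, combined with the Lipschitz bound of \cref{lem:Gamma_is_Lipschitz}. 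The remaining work is a systematic transcription of that machinery, handling the $N$ coupled components and the empirical-measure argument of $f^k$.
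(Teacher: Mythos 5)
Your proposal follows essentially the same route as the paper: a doubly-indexed Picard scheme for the coupled $N$-player system, uniform (in $k$) contraction for the horizontal limit via \eqref{ineq:Mtilde_smaller_quarter} and \ref{S7}, an induction on $q$ for the vertical limit using \ref{S5'}, \ref{S2}, \ref{S8'} and \eqref{empiricalineq}, and the transfer of the bracket convergences from \cite[Sections 3.3.2, 3.3.3]{papapantoleon2023stability}; your observation that the $\Gamma$-function and kernels under $\mathbb{F}^{k,(1,\dots,N)}$ reduce to the single-player objects by immersion and \ref{H1} is exactly how the paper handles that point. The only minor imprecision is attributing the solvability of each (frozen-driver) Picard step to a predictable representation property for finite $k$ — the paper only assumes PRP at $k=\infty$ and instead relies on the orthogonal decomposition $\mathcal{L}^2\oplus\mathcal{K}^2\oplus\mathcal{H}^2(\overline{X}^{\perp})$, which is why the orthogonal component $M^{k,i,N}$ appears — but this does not affect the validity of the argument.
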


\begin{proof}
The proof is completely analogous to the proof of \cref{McKean--Vlasov Stability}.
The missing details to complete this proof appear in \cref{subsec:aux_mfBSDE_Stability}. 
\end{proof}


\section{Proofs of the theorems presented in Section \ref{sec:StabilityProperties}}
\label{sec:proofs}

In the following subsections, we will present certain preparatory and auxiliary results required for the proofs of \cref{McKean--Vlasov Stability,Prop. of Chaos Stability,Mean-field system stability}.
Each subsection will be devoted to a specific theorem for the convenience of the reader.
At this point, we have to extend the convention we made about the role of the indices $k,i,N$ by introducing another integer, namely $q$, which may also take the value $0$, and whose role is to denote the step of the Picard iteration.
The newly introduced index will always succeed the other indices and will be placed within parentheses.
As an example, $\mathscr{S}^{k,i,N,(q)}$ denotes the Picard representation obtained in the $q-$th iteration under the data $\mathscr{D}^k$ for the $i-$th player participating in an $N$-player game.
The interpretation is obvious in case fewer indices appear.


\subsection{Auxiliary results for Theorem \ref{McKean--Vlasov Stability}: Stability of McKean--Vlasov BSDEs}
\label{subsec:aux_MVBSDE_Stability}

In this subsection, the framework of \cref{McKean--Vlasov Stability} is adopted.
The next proposition is an extension of the uniform \textit{a priori} estimates of \citet[Corollary 10]{briand2002robustness} and \citet[Proposition 3.2]{papapantoleon2023stability} to McKean--Vlasov BSDEs driven by general semimartingales, and provides the horizontal convergences in \cref{table:McKean--Vlasovscheme}.

\begin{proposition}[Uniform \emph{a priori} estimates for McKean--Vlasov BSDEs]\label{prop:uniform_Picard_MVSDE}
Let $i\in\mathbb{N}$.
Let us associate to the standard data $\mathscr{D}^k$ the sequence of Picard iterations $\{\mathscr{S}^{k,i,(q)}\}_{q \in \mathbb{N}\cup\{0\}}$, for every $k \in \overline{\mathbb{N}}$, where $\mathscr{S}^{k,i,(0)}$ is the zero element of $\mathscr{S}^{2,k}_{\hat{\beta}} \times \mathbb{H}^{2,k,\circ}_{\hat{\beta}} \times \mathbb{H}^{2,k,\natural}_{\hat{\beta}} \times \mathcal{H}^{2,k,\perp_{\mathbb{F}^{k,i}}}_{\hat{\beta}}$. 
Then, there exists $k_{\star,0}$, independent of $i$, such that\footnote{Recall the convention we made on p. \pageref{notation:short_norm} for the notation of the norms.}
\begin{align*}
    \lim_{q \rightarrow \infty} \sup_{k \geq k_{\star,0}} \left\{ \big\|\mathscr{S}^{k,i,(q)} -  \mathscr{S}^{k,i}\big\|^{2}_{\star,\hat{\beta},k,i}\right\} = 0.
\end{align*}%
Additionally, we have that $\sup_{k \geq k_{\star,0}}\left\{\|  \mathscr{S}^{k,i}\|^{2}_{\star,\hat{\beta},k,i}\right\} < \infty$.
\end{proposition}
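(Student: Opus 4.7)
The plan is to establish the claim through the standard combination of a contraction argument for the Picard iteration together with a uniform (in $k$) a priori bound on the fixed point. Following the argument in \cite[Theorem 4.8]{PST_BackPropagChaos}, the $(q+1)$-th Picard iterate $\mathscr{S}^{k,i,(q+1)}$ is defined as the unique solution of the standard (non McKean--Vlasov) BSDE under the data $\mathscr{D}^k$ whose driver has the measure argument frozen at $\mathcal{L}(Y^{k,i,(q)}_\cdot)$. Because of Assumption \ref{H:prop_contraction} this BSDE is well posed for every $q\in\mathbb{N}\cup\{0\}$, and \cite[Theorem 4.8]{PST_BackPropagChaos} guarantees $\mathscr{S}^{k,i,(q)}\xrightarrow{q\to\infty}\mathscr{S}^{k,i}$ in $\|\cdot\|_{\star,\hat{\beta},k,i}$ for every fixed $k\in\overline{\mathbb{N}}$. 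The point of the proposition is to upgrade this convergence to a uniform one in $k\geq k_{\star,0}$.

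Next, I would apply the quantitative a priori estimate from \cite[Theorem 4.8]{PST_BackPropagChaos} to the difference of two consecutive iterates. Using Assumption \ref{H4} and the inequality $W^{2}_{2,|\cdot|}\big(\mathcal{L}(Y^{k,i,(q)}_s),\mathcal{L}(Y^{k,i,(q-1)}_s)\big)\leq \mathbb{E}\big[|Y^{k,i,(q)}_s-Y^{k,i,(q-1)}_s|^{2}\big]$, together with the Lipschitz property of $\Gamma^{k,i}$ stated in \cref{lem:Gamma_is_Lipschitz}, one arrives at a contraction of the form
\[
\big\|\mathscr{S}^{k,i,(q+1)}-\mathscr{S}^{k,i,(q)}\big\|^{2}_{\star,\hat{\beta},k,i} \leq 3\,\widetilde{M}^{\Phi^k}(\hat{\beta})\;\big\|\mathscr{S}^{k,i,(q)}-\mathscr{S}^{k,i,(q-1)}\big\|^{2}_{\star,\hat{\beta},k,i},
\]
the factor $3$ arising from handling jointly the drift, the stochastic-integral and the measure terms. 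By Assumption \ref{S9}.\ref{S9.i} ($\Phi^k\to 0$) and the explicit, continuous dependence of $\widetilde{M}^{\Phi}(\hat{\beta})$ on $\Phi$ combined with Assumption \ref{S9}.\ref{S9.ii} ($\widetilde{M}^{0}(\hat{\beta})<\tfrac{1}{4}$), one can pick $k_{\star,0}$ (independent of $i$) such that $L:=\sup_{k\geq k_{\star,0}}3\widetilde{M}^{\Phi^k}(\hat{\beta})<1$. Telescoping and summing the resulting geometric series yields
\[
\big\|\mathscr{S}^{k,i,(q)}-\mathscr{S}^{k,i}\big\|^{2}_{\star,\hat{\beta},k,i}\leq \frac{L^{q}}{(1-\sqrt{L})^{2}}\,\big\|\mathscr{S}^{k,i,(1)}\big\|^{2}_{\star,\hat{\beta},k,i},
\]
since $\mathscr{S}^{k,i,(0)}=0$, so it remains to establish a uniform-in-$k$ bound on $\|\mathscr{S}^{k,i,(1)}\|^{2}_{\star,\hat{\beta},k,i}$, or equivalently on $\|\mathscr{S}^{k,i}\|^{2}_{\star,\hat{\beta},k,i}$.

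For this uniform a priori bound, I would invoke once more the estimate of \cite[Theorem 4.8]{PST_BackPropagChaos} directly on $\mathscr{S}^{k,i}$, now comparing the driver $f^k(\cdot,Y^{k,i}_\cdot,\ldots,\mathcal{L}(Y^{k,i}_\cdot))$ to $f^k(\cdot,0,0,0,\delta_0)$ and treating the Wasserstein term via $W_{2,|\cdot|}^{2}(\mathcal{L}(Y^{k,i}_s),\delta_0)=\mathbb{E}[|Y^{k,i}_s|^{2}]$. This delivers a bound of the form
\[
\big\|\mathscr{S}^{k,i}\big\|^{2}_{\star,\hat{\beta},k,i}\leq C(\hat{\beta})\bigg(\big\|\xi^{k,i}\big\|^{2}_{\mathbb{L}^{2}_{\hat{\beta}}(\mathcal{F}^{k,i}_{T^k},A^{k};\mathbb{R}^d)}+\mathbb{E}\bigg[\int_{0}^{T^k}\!\!\mathcal{E}(\hat{\beta} A^{k})_{s-}\,\frac{|f^k(s,0,0,0,\delta_0)|^{2}}{(\alpha^k_s)^{2}}\,\ud C^{k}_s\bigg]\bigg),
\]
with $C(\hat{\beta})$ depending only on $\hat{\beta}$ and on $\sup_{k\geq k_{\star,0}}\widetilde{M}^{\Phi^k}(\hat{\beta})<1$. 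Assumption \ref{S7} gives $\mathcal{E}(\hat{\beta} A^{k})_{s-}\leq \ue^{\hat{\beta}\overline{A}}$, so Assumption \ref{S5} yields the uniform boundedness of $\{\|\xi^{k,i}\|^{2}_{\mathbb{L}^{2}_{\hat{\beta}}}\}$, while Assumption \ref{S6} supplies the uniform boundedness of the driver-at-zero integral. Hence $\sup_{k\geq k_{\star,0}}\|\mathscr{S}^{k,i}\|^{2}_{\star,\hat{\beta},k,i}<\infty$, and substituting this back in the Picard geometric bound finishes the proof.

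The main obstacle is the careful bookkeeping of constants through the a priori and contraction estimates, since both the norms $\|\cdot\|_{\star,\hat{\beta},k,i}$ and the contraction constants vary with $k$ through $A^{k}$, $C^{k}$, $c^k$ and $\Phi^k$. In particular, one has to verify that the measure argument in $f^k$ contributes to the overall contraction exactly through the factor encoded by $3\widetilde{M}^{\Phi^k}(\hat{\beta})$, and that the thresholding coming from Assumption \ref{S9} is strong enough so that the enlarged contraction constant remains strictly below $1$ uniformly in $k\geq k_{\star,0}$; this is precisely the strengthening of \ref{H:prop_contraction} that \ref{S9}.\ref{S9.ii} is designed to provide.
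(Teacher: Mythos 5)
Your proposal is correct and follows essentially the same route as the paper: choose $k_{\star,0}$ via \ref{S9} so that the Picard contraction constant is uniformly below $1$ for $k\geq k_{\star,0}$, telescope into a geometric series, and bound $\big\|\mathscr{S}^{k,i,(1)}\big\|^{2}_{\star,\hat{\beta},k,i}$ uniformly in $k$ using the \emph{a priori} estimate together with \ref{S5}, \ref{S6} and \ref{S7}. The only cosmetic differences are that the paper telescopes the squared norm with dyadic weights (hence needs $4\widetilde{M}^{\Phi^k}(\hat{\beta})<1$, with per-step constant $2\widetilde{M}^{\Phi^k}(\hat{\beta})$) whereas you telescope the norm itself, which does not affect the validity of the argument.
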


\begin{proof}
Let us fix $i\in\mathbb{N}$.
Using \ref{S9}.\ref{S9.ii}, we can select $k_{\star,0}$ such that for every $k \geq k_{\star,0}$ we have 
$ 4 \widetilde{M}^{\Phi^k}(\hat{\beta}) < 1$. 
Then, from the triangle inequality we have for every $k \in \overline{\mathbb{N}}$ and $q \in \mathbb{N}$ that
\begin{align*}
    \big\|\mathscr{S}^{k,i,(q)} -  \mathscr{S}^{k,i}\big\|^{2}_{\star,\hat{\beta},k,i} 
    &\leq \sum_{m = 0}^{\infty} 2^{m + 1}
    \big\|\mathscr{S}^{k,i,(m + 1 + q)} -  \mathscr{S}^{k,i,(m + q)}\big\|^{2}_{\star,\hat{\beta},k,i}\\
    &\leq \sum_{m = 0}^{\infty} 2^{m + 1}
    \big(2 \widetilde{M}^{\Phi^k}(\hat{\beta})\big)^{m + q} \big\|\mathscr{S}^{k,i,(1)} -  \mathscr{S}^{k,i,(0)}\big\|^{2}_{\star,\hat{\beta},k,i}\\
    &= 2\big( 2\widetilde{M}^{\Phi^k}(\hat{\beta})\big)^q
    \big\|\mathscr{S}^{k,i,(1)}\big\|^{2}_{\star,\hat{\beta},k,i}
    \sum_{m = 0}^{\infty} \big(4 \widetilde{M}^{\Phi^k}(\hat{\beta})\big)^{m}  \\
    &= 2\frac{\left(2\widetilde{M}^{\Phi^k}(\hat{\beta})\right)^q}{1 - 4\widetilde{M}^{\Phi^k}(\hat{\beta})} \hspace{0.1cm}\|\mathscr{S}^{k,i,(1)}\|^{2}_{\star,\hat{\beta},k,i}.
\end{align*}
The right-hand side is finite. 
Indeed, we have from \citet[Lemma 3.1]{PST_BackPropagChaos} that
\begin{align*}
\big\|\mathscr{S}^{k,i,(1)}\big\|^{2}_{\star,\hat{\beta},k,i} 
    &\overset{\phantom{\ref{S7}}}{\leq} 
    \Big(26 + \frac{2}{\hat{\beta}}+ (9\hat{\beta} + 2)\Phi^k \Big) 
    \big\|\xi^{k,i}\big\|^2_{\mathbb{L}^{2,k,i}_{\hat{\beta}}(\mathbb{R}^d)}
    + \widetilde{M}^{\Phi^k}(\hat{\beta})
    \Big\| \frac{f^k(\cdot,0,0,0,\delta_0)}{\alpha^k_\cdot}\Big\|^{2}_{\mathbb{H}^{2,k,i}_{\hat{\beta}}(\mathbb{R}^d)}\\
    &\overset{\ref{S7}}{\leq} 
    \ue^{\hat{\beta}\overline{A}}\Big(26 + \frac{2}{\hat{\beta}}+ (9\hat{\beta} + 2)\Phi^k \Big) 
    \big\|\xi^{k,i}\big\|^2_{\mathbb{L}^{2,k,i}_{0}(\mathbb{R}^d)}
    + \ue^{\hat{\beta}\overline{A}}\widetilde{M}^{\Phi^k}(\hat{\beta})
    \Big\| \frac{f^k(\cdot,0,0,0,\delta_0)}{\alpha^k_\cdot}\Big\|^{2}_{\mathbb{H}^{2,k,i}_{0}(\mathbb{R}^d)},
\end{align*}
which in particular implies
\begin{align*}
\sup_{k\ge k_{\star,0}} \big\|\mathscr{S}^{k,i,(1)}\big\|^{2}_{\star,\hat{\beta},k,i} <\infty 
\end{align*}
in view of  \ref{S5}, \ref{S6} and \ref{S9}.
\end{proof}



\begin{notation}\label{Notation5.2}
In order to simplify the notation, let us define the following quantities, for $i\in\mathbb{N}$, $t \in \mathbb{R}_+\cup\{\infty\}$, $k \in \overline{\mathbb{N}}$ and $q \in \mathbb{N}\cup \{0\}$:
    \begin{itemize}
        \item $\displaystyle L^{k,i,(q)}_t := \int^{t}_{0}f^k\big(s,Y^{k,i,(q)}_s,Z^{k,i,(q)}_s c^{k}_s,\Gamma^{(\mathbb{F}^{k,i},\overline{X}^{k,i},\Theta^k)}(U^{k,i,(q)})_s,\mathcal{L}(Y^{k,i,(q)}_s)\big) \, \ud C^{k}_s$,
\vspace{0.5cm}
        \item $\displaystyle L^{k,i}_t := \int^{t}_{0}f^k\big(s,Y^{k,i}_s,Z^{k,i}_s c^{k}_s,\Gamma^{(\mathbb{F}^{k,i},\overline{X}^{k,i},\Theta^k)}(U^{k,i})_s,\mathcal{L}(Y^{k,i}_s)\big) \, \ud C^{k}_s$,\hspace{0.5em} and
        \item $\displaystyle G^{k,i,(q)} := \int^{\infty}_{0}\frac{\Big|f^k\big(s,Y^{k,i,(q)}_s,Z^{k,i,(q)}_s c^{k}_s,\Gamma^{(\mathbb{F}^{k,i},\overline{X}^{k,i},\Theta^k)}(U^{k,i,(q)})_s,\mathcal{L}(Y^{k,i,(q)}_s)\big)\Big|^2}{(\alpha^k)^2} \, \ud C^{k}_s$.%
    \end{itemize}%
\end{notation}%

Analogously to the arguments presented in \citet[Sections 3.3.1, 3.3.2 and 3.3.3]{papapantoleon2023stability}, we can conclude that the convergences \eqref{MVStab:con.1}, \eqref{MVStab:con.2} and \eqref{MVStab:con.3} follow by combining \cref{prop:uniform_Picard_MVSDE} with the next result, \textit{i.e.} \cref{lem:MVBSDE_Picard_Induction}.
In order to further clarify the argument, let us make a series of notes. 
\begin{enumerate}
\item The first step is to prove convergence \eqref{MVStab:con.1}, and to do this we just need to prove the vertical convergences depicted in \cref{table:McKean--Vlasovscheme}. 
    These are proved by induction on $q$, and the method consists of splitting $Y^{k,i,(q)}$ in the martingale and the finite variation parts. 
    Then, for the convergence of the martingale parts, we rely on the stability results for martingale representations provided in \citet{papapantoleon2019stability}, while the convergence of the finite variation parts follows from \cref{prop:uniform_Picard_MVSDE} and \cref{lem:MVBSDE_Picard_Induction}. 
\item The second step is to show convergence \eqref{MVStab:con.2}, and in that respect we again use the Moore--Osgood theorem and the splitting to the  martingale and the finite variation parts.
    The convergence of the martingale parts follows from \citet[Theorem VI.6.26]{jacod2013limit} after one checks that the so called P-UT property is satisfied finally uniformly in $k$. 
    This follows from \cref{prop:uniform_Picard_MVSDE} and \cite[Corollary VI.6.30]{jacod2013limit}. 
    Then, in order to obtain the joint convergence for the finite variation parts, the Moore--Osgood theorem is used once again. 
    The (uniform) horizontal convergence follows from \cite[Theorem VI.6.26]{jacod2013limit} after the P-UT property is verified to hold finally uniformly in $k$. 
    Now, for this purpose \cite[Comment VI.6.6]{jacod2013limit} is utilized along with \cref{prop:uniform_Picard_MVSDE} to prove the finally uniform boundedness of the total variation with respect to the $\mathbb{L}^2-$norm. 
    The vertical convergences follow from \cite[Corollary 3.10]{papapantoleon2019stability}, and the general result follows by the polarization identity.
\item The last step is to show convergence \eqref{MVStab:con.3}.
    Here, we work directly with the solutions, and not with the elements of the Picard scheme. 
    We treat first the martingale parts and then the parts related with the (special) semimartingales $Y^{k,i}$. 
    In order to prove the convergence for the martingale parts, we use \cite[Corollary 3.10]{papapantoleon2019stability}, while for the sequences related to $Y^{k,i}$ we first note that their quadratic variations are also special semimartingales. 
    Then, from \citet[Theorem 3.2.1]{lenglart1980presentation} and using the de La Vallèe-Poussin criterion, we can get the uniform integrability of the trace of the angle bracket sequences at infinity from the corresponding result for the quadratic variations. 
    Hence, the sequence associated to the angle brackets is also tight in $\mathbb{R}^{d}$. 
    Therefore, we can apply \citet[Theorem 11]{memin2003stability} which, along with standard arguments and the polarization identity, allows us to conclude.
\end{enumerate}

\begin{lemma}\label{lem:MVBSDE_Picard_Induction}
Let $q \in \mathbb{N}\cup\{0\}$, then we have
\begin{enumerate}
    \item\label{lem:MVBSDE_Picard_Induction i.} $L^{k,i,(q)}_{\infty} \xrightarrow[k \rightarrow \infty]{\mathbb{L}^2(\Omega,\mathcal{G},\mathbb{P},\mathbb{R}^d)} L^{\infty,i,(q)}_{\infty}$; 
    \vspace{0.5cm}
    \item $L^{k,i,(q)}_{\cdot} \xrightarrow[k \rightarrow \infty]{\left(\textup{J}_1(\mathbb{R}^d),\mathbb{L}^2\right)} L^{\infty,i,(q)}_{\cdot}$;
\vspace{0.5cm}
    \item\label{lem:MVBSDE_Picard_Induction iii.} The sequence $\left\{G^{k,i,(q)}\right\}_{k \in \overline{\mathbb{N}}}$ is uniformly integrable.
\end{enumerate}
\end{lemma}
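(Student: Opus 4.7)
The proof will proceed by induction on $q\in\mathbb{N}\cup\{0\}$, proving the three statements jointly at each level. For the base case $q=0$, one has $\mathscr{S}^{k,i,(0)}\equiv 0$, so $G^{k,i,(0)}=\int_0^{\infty}|f^k(s,0,0,0,\delta_0)|^2/(\alpha^k_s)^2\,\ud C^k_s$ and hence (iii) is immediate from \ref{S6}. For (i)--(ii), applying \ref{S8}(b) with the constant sequence $a^k\equiv 0$ (whose law is $\delta_0$) yields the pointwise $\textup{J}_1$-convergence $f^k(\cdot,0,0,0,\delta_0)\to f^{\infty}(\cdot,0,0,0,\delta_0)$. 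Combined with the locally uniform convergence $C^k\to C^{\infty}$ (which holds deterministically, since \ref{H1} makes $C^k$ deterministic, \ref{S1} makes $C^{\infty}$ continuous, and \ref{S2}, \ref{S10} together with Memin's continuity theorem give the convergence), the standard stability of Lebesgue--Stieltjes integrals produces $L^{k,i,(0)}_{\cdot}\to L^{\infty,i,(0)}_{\cdot}$ in $\textup{J}_1$, pointwise on $\Omega$. The upgrade to $\mathbb{L}^2$ follows from the Cauchy--Schwarz bound $|L^{k,i,(0)}_{t}|^{2}\leq A^k_{\infty}\,G^{k,i,(0)}\leq \overline{A}\,G^{k,i,(0)}$ together with the uniform integrability of $G^{k,i,(0)}$ from \ref{S6} and \ref{S7}.

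For the inductive step, assume (i)--(iii) at level $q$. From (i) and \ref{S5} one obtains
\begin{align*}
\widetilde{M}^{k,i,(q+1)}_{\infty}:=\xi^{k,i}+L^{k,i,(q)}_{\infty}\xrightarrow[k\to\infty]{\mathbb{L}^2(\Omega,\mathcal{G},\mathbb{P};\mathbb{R}^d)}\xi^{\infty,i}+L^{\infty,i,(q)}_{\infty}.
\end{align*}
Exploiting the weak convergence $\mathbb{F}^{k,i}\xrightarrow[k\to\infty]{\textup{w}}\mathbb{F}^{\infty,i}$ (see \eqref{weak_conv_Filtrations}) together with the predictable representation property of $\overline{X}^{\infty,i}$, the martingale stability results of \citet{papapantoleon2019stability} then give $(\textup{J}_1,\mathbb{L}^2)$-convergence of the martingale $\widetilde{M}^{k,i,(q+1)}=\mathbb{E}[\widetilde{M}^{k,i,(q+1)}_{\infty}|\mathcal{F}^{k,i}_{\cdot}]$ and its decomposition components $Z^{k,i,(q+1)}\cdot X^{k,i,\circ}$, $U^{k,i,(q+1)}\star\widetilde{\mu}^{k,i,\natural}$, $M^{k,i,(q+1)}$, together with the convergence of all associated brackets. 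Setting $Y^{k,i,(q+1)}_{\cdot}=\widetilde{M}^{k,i,(q+1)}_{\cdot}-L^{k,i,(q)}_{\cdot}$ and combining with the inductive hypothesis (ii), we obtain $(\textup{J}_1,\mathcal{S}^2)$-convergence of $Y^{k,i,(q+1)}$ and, in particular, Wasserstein-$2$ convergence of its marginal laws.

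With these convergences in hand, a density/diagonal argument against the countable sets $D^{\circ,d\times p}$ and $D^{\natural}$, controlled by the Lipschitz estimate \ref{H4} and by \cref{lem:Gamma_is_Lipschitz}, lets us promote the pointwise convergence provided by \ref{S8}(b) to pointwise $\textup{J}_1$-convergence of the full integrand $f^k\bigl(\cdot,Y^{k,i,(q+1)}_{\cdot},Z^{k,i,(q+1)}_{\cdot}c^k_{\cdot},\Gamma^{k,i}(U^{k,i,(q+1)})_{\cdot},\mathcal{L}(Y^{k,i,(q+1)}_{\cdot})\bigr)$. Stability of Stieltjes integration against $C^k\to C^{\infty}$ then delivers the $\textup{J}_1$-convergence in (ii) at level $q+1$.

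The remaining bulk of work, and the main obstacle, is (iii) at level $q+1$. The Lipschitz bound \ref{H4} combined with the elementary inequalities $r^k/(\alpha^k)^2\leq (\alpha^k)^2$, $\vartheta^{k,\circ}/(\alpha^k)^2\leq 1$, $\vartheta^{k,\natural}/(\alpha^k)^2\leq 1$, $\vartheta^{k,*}/(\alpha^k)^2\leq (\alpha^k)^2$ gives a pointwise domination of the integrand defining $G^{k,i,(q+1)}$ by a finite sum whose integrals against $\ud C^k_s$ are controlled by $G^{k,i,(0)}$ and by $\|\mathscr{S}^{k,i,(q+1)}\|^2_{\star,\hat{\beta},k,i}$; the latter is uniformly bounded in $k$ thanks to \cref{prop:uniform_Picard_MVSDE} and \ref{S7}. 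This furnishes an $\mathbb{L}^1$-bound, which, together with the pointwise convergence of the integrand already established, the dominated-convergence-type control derived from the $\mathcal{S}^2$-convergence of $Y^{k,i,(q+1)}$, and the UI of $G^{k,i,(0)}$ from \ref{S6}, is promoted to $\mathbb{L}^1$-convergence $G^{k,i,(q+1)}\to G^{\infty,i,(q+1)}$ via a termwise Vitali argument; hence UI. Finally, (i) at level $q+1$ follows from (ii) together with the Cauchy--Schwarz bound $|L^{k,i,(q+1)}_{\infty}|^{2}\leq\overline{A}\,G^{k,i,(q+1)}$ and (iii), completing the induction.
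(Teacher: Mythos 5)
Your proposal is correct and follows essentially the same route as the paper: an induction on $q$ whose base case rests on \ref{S6} and \ref{S8}, whose inductive step passes through the martingale--representation stability results of \citet{papapantoleon2019stability} together with the generator continuity \ref{S8}, and whose uniform-integrability step absorbs the new McKean--Vlasov argument of the generator through the Lipschitz bound \ref{H4}. One caution on part (iii): the deterministic bound $\sup_{k}\|\mathscr{S}^{k,i,(q+1)}\|^2_{\star,\hat{\beta},k,i}<\infty$ only yields $\mathbb{L}^1$-boundedness, not uniform integrability, so the ``termwise Vitali'' step must be run on \emph{random} dominating quantities --- $\sup_{s}\big|\mathbb{E}[\xi^{k,i}|\mathcal{F}^{k,i}_s]\big|^2$, $\sup_{s}\big|\mathbb{E}[\int_{s}^{\infty}f^k(t,0,0,0,\delta_0)\,\ud C^k_t|\mathcal{F}^{k,i}_s]\big|^2$, $\textup{Tr}\big[\langle \widetilde{M}^{k,i,(q)}\rangle_{\infty}\big]$ and $G^{k,i,(q)}$ --- each shown uniformly integrable separately (via \cite[Lemma A.17]{papapantoleon2023stability}, the $\mathbb{L}^1$-convergence of the bracket traces from \cite[Theorem 2.16]{papapantoleon2019stability}, and the induction hypothesis), exactly as in the paper's displayed estimate for $G^{k,i,(1)}$.
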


\begin{proof} 
The above results are proved by induction. 
The first step of the inductions follows exactly \cite[Section 3.4]{papapantoleon2023stability}, except that in the proof of \cite[Lemma 3.12]{papapantoleon2023stability}, when we apply the Lipschitz condition of the generators we would get for every $k \in \overline{\mathbb{N}}$ that the following hold $\mathbb{P}-a.s.$\footnote{We use the inequalities $\frac{r^k}{(\alpha^k)^2}, \frac{\vartheta^{k,*}}{(\alpha^k)^2}\leq (\alpha^k)^2$ and $\frac{\vartheta^{k,\circ}}{(\alpha^k)^2},\frac{\vartheta^{k,\natural}}{(\alpha^k)^2} \leq 1$.}
\begin{align*}
&G^{k,i,(1)} 
    = \int^{\infty}_{0}\frac{\Big|f^k\big(s,Y^{k,i,(1)}_s,Z^{k,i,(1)}_s c^{k}_s,\Gamma^{(\mathbb{F}^{k,i},\overline{X}^{k,i},\Theta^k)}(U^{k,i,(1)})_s,\mathcal{L}(Y^{k,i,(1)}_s)\big)\Big|^2}{(\alpha^k)^2} \, \ud C^{k}_s\\
&
\begin{multlined}[\textwidth]
    \leq \int^{\infty}_{0}(a^k_s)^2|Y^{k,i,(1)}_s|^2 + \|Z^{k,i,(1)}_s c^{k}_s\|^2 + 2 \tnorm{U^{k,i,(1)}_s}_{k,i,s}^2\, \ud C^{k}_s\\
+ \int^{\infty}_{0}
(a^k_s)^2 \mathbb{E}\big[|Y^{k,i,(1)}_s|^2\big] 
+\frac{\left|f^k\left(s,0,0,0,\delta_0\right)\right|^2}{(\alpha^k)^2} \, \ud C^{k}_s
\end{multlined}\\
&
\begin{multlined}[\textwidth]
\leq 2 \int^{\infty}_{0}|Y^{k,i,(1)}_s|^2\, \ud A^{k}_s 
+ 2\mathbb{E}\bigg[\int^{\infty}_{0}|Y^{k,i,(1)}_s|^2\, \ud A^{k}_s\bigg] \\
+ 2\int^{\infty}_{0}\, \ud \text{Tr}\Big[
    \big\langle Z^{k,i,(1)}\cdot X^{k,i,\circ} + U^{k,i,(1)}\star \widetilde{\mu}^{{k,i,\natural}} \big\rangle_s\Big]
+ 2G^{k,i,(0)}
\end{multlined}\\
&
\begin{multlined}[\textwidth]
\leq 4 \overline{A}
\sup_{s \in\mathbb{R}_+}
\big|\mathbb{E}\big[\xi^{k,i}\big|\mathcal{F}^{k,i}_s\big]\big|^2 
+ 4 \overline{A}\sup_{s\in\mathbb{R}_+}
\bigg|\mathbb{E}\bigg[\int_{s}^{\infty}f^k(t,0,0,0,\delta_0) \ud C^{k}_t\Big|\mathcal{F}^{k,i}_s\bigg]\bigg|^2
+ 2\hspace{0.1cm}\text{Tr}\left[\langle \widetilde{M}^{k,i,(0)} \rangle_{\infty}\right] \\
+ 2\hspace{0.1cm} G^{k,i,(0)} 
+ 4\overline{A}\hspace{0.1cm}\mathbb{E}\bigg[
\sup_{s \in \mathbb{R}_+}
\big|\mathbb{E}\big[\xi^{k,i}\big|\mathcal{F}^{k,i}_s\big]\big|^2
+ \sup_{s\in\mathbb{R}_+}\Big|\mathbb{E}\Big[\int_{s}^{\infty}f^k(t,0,0,0,\delta_0) \ud C^{k}_t\Big|\mathcal{F}^{k,i}_s\Big]\Big|^2
\bigg],    
\end{multlined}
\end{align*}
where we have introduced the notation $\widetilde{M}^{k,i,(0)}_{\cdot}$ for the martingale
\begin{align*}
    \widetilde{M}^{k,i,(0)}_{\cdot}
    &:=
    Y^{k,i,(1)}_{\cdot} + \int_{0}^{\cdot}f^k\left(s,Y^{k,i,(0)}_s,Z^{k,i,(0)}_s c^{k}_s,\Gamma^{(\mathbb{F}^{k,i},\overline{X}^{k,i},\Theta^k)}(U^{k,i,(0)})_s,\mathcal{L}(Y^{k,i,(0)}_s)\right) \ud C^{k}_s\\
    &= Z^{k,i,(1)} \cdot X^{k,i,\circ}_\cdot + U^{k,i,(1)}\star \widetilde{\mu}^{k,i,\natural}_{\cdot} .
\end{align*}
Hence, to show that $\{G^{k,i,1}\}_{k \in \overline{\mathbb{N}}}$ is uniformly integrable, we will show that each of the summands in the last inequality belongs to a uniformly integrable sequence. 
Let us begin with the sequence
$$\left\{\text{Tr}\left[\langle \widetilde{M}^{k,i,(0)} \rangle_{\infty}\right]\right\}_{k \in \overline{\mathbb{N}}}.$$
The induction hypothesis yields that
$$
L^{k,i,(0)}_{\infty} \xrightarrow[k \rightarrow \infty]{\mathbb{L}^2(\Omega,\mathcal{G},\mathbb{P},\mathbb{R}^d)} L^{\infty,i,(0)}_{\infty},
$$
and because for every $k\in \overline{\mathbb{N}}$ we have $Y^{k,i,(1)}_{\infty} = \xi^{k,i}$, from \ref{S5} we get
$$
\widetilde{M}^{k,i,(0)}_{\infty} \xrightarrow[k \rightarrow \infty]{\mathbb{L}^2(\Omega,\mathcal{G},\mathbb{P},\mathbb{R}^d)} \widetilde{M}^{\infty,i,(0)}_{\infty}.
$$
Then, from \cite[Theorem 2.16]{papapantoleon2019stability} we have the convergence
$$
\text{Tr}\left[\langle \widetilde{M}^{k,i,(0)} \rangle_{\infty}\right] \xrightarrow[k \rightarrow \infty]{\mathbb{L}^1(\Omega,\mathcal{G},\mathbb{P},\mathbb{R}^d)} \text{Tr}\left[\langle \widetilde{M}^{{ \infty},i,(0)} \rangle_{\infty}\right],
$$
from which follows the desired uniform integrability property.
As for the first, fourth and fifth terms in the above inequality, our main tool will be \cite[Lemma A.17]{papapantoleon2023stability}. 
In order to apply this lemma, it would be enough that the sequences $$\{|\xi^{k,i}|^2\}_{k \in \overline{\mathbb{N}}} \hspace{0.2cm}\text{and}\hspace{0.2cm}\{G^{k,i,(0)}\}_{k \in \overline{\mathbb{N}}}$$ 
are uniformly integrable, which holds from \ref{S5} and \ref{S6}. 
The claim about the remaining two terms holds true because
\begin{align*}
&\sup_{s\in\mathbb{R}_+}
\bigg|\mathbb{E}\bigg[\int_{s}^{\infty}f^k(t,0,0,0,\delta_0) \ud C^{k}_t\Big|\mathcal{F}^{k,i}_s\bigg]\bigg|^2 \\
&\hspace{1em}= \sup_{s\in\mathbb{R}_+}
\bigg|\mathbb{E}\bigg[\int_{0}^{\infty}f^k(t,0,0,0,\delta_0) \ud C^{k}_t\Big|\mathcal{F}^{k,i}_s\bigg] - \int_{0}^{s}f^k(t,0,0,0,\delta_0) \ud C^{k}_t\bigg|^2\\
&\hspace{1em}\leq 2 \sup_{s\in\mathbb{R}_+}
\bigg|\mathbb{E}\bigg[\int_{0}^{\infty}f^k(t,0,0,0,\delta_0) \ud C^{k}_t\Big|\mathcal{F}^{k,i}_s\bigg]\Bigg|^2 + 2 \sup_{s \in \mathbb{R}_+}\Bigg|\int_{0}^{s}f^k(t,0,0,0,\delta_0) \ud C^{k}_t\bigg|^2,
\end{align*}
hence, using the Jensen and the Cauchy--Schwarz inequalities in conjunction with \ref{S7}, we conclude the required result.

Finally, we proceed with the proof that the $q-$th step of the induction is valid. 
Following \cite[Sections 3.5.1, 3.5.2, 3.5.3, 3.5.4, 3.5.5] {papapantoleon2023stability} we can conclude with exactly the same line of arguments, noting only that
in \cite[Proposition 3.19]{papapantoleon2023stability} we utilize our 
    assumption \ref{S8}.
\end{proof}


\subsection{Auxiliary results for Theorem \ref{Prop. of Chaos Stability}: Stability of backward propagation of chaos}\label{subsec:aux_PropagChaos_Stability}

In this subsection, the framework of \cref{Prop. of Chaos Stability} is adopted.
The assumptions here are the same as for \cref{McKean--Vlasov Stability}, hence we can use the results derived in \cref{subsec:aux_MVBSDE_Stability}.

\begin{lemma}\label{lemma 2.2}
Let $i\in\mathbb{N}$.
The random variables 
$\big\{\sup_{s \in \mathbb{R}_+}|Y^{k,i}_s|^2\big\}_{k \in \mathbb{N}}$ are uniformly integrable.
\end{lemma}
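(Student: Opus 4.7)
The plan is to exploit the BSDE structure of $Y^{k,i}$ by decomposing it via its forward representation, to reduce the problem to uniform integrability of two simpler families, and then to leverage the conclusions of \cref{McKean--Vlasov Stability} (which are available in this subsection by hypothesis) together with the \emph{a priori} estimates of \cref{prop:uniform_Picard_MVSDE} to conclude. From \eqref{MVBSDE} we have
\begin{align*}
Y^{k,i}_t \;=\; Y^{k,i}_0 + \widetilde{M}^{k,i}_t - L^{k,i}_t,
\end{align*}
where $\widetilde{M}^{k,i}:=Z^{k,i}\cdot X^{k,i,\circ}+U^{k,i}\star\widetilde{\mu}^{k,i,\natural}+M^{k,i}$ is the total martingale part and $L^{k,i}$ is the finite-variation process of \cref{Notation5.2}, defined with the actual solution in place of the Picard iterate. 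The value $Y^{k,i}_0$ is deterministic and uniformly bounded by the \emph{a priori} estimates, while Cauchy--Schwarz in $\mathrm{d}C^k$ weighted by $(\alpha^k)^2$ together with \ref{S7} gives $\sup_t|L^{k,i}_t|^2\le\overline{A}\,G^{k,i}_\infty$, where $G^{k,i}_\infty$ denotes the analogue of $G^{k,i,(q)}$ for the actual solution. Consequently,
\begin{align*}
\sup_t|Y^{k,i}_t|^2 \;\le\; C_1 + 3\sup_t|\widetilde{M}^{k,i}_t|^2 + 3\overline{A}\,G^{k,i}_\infty,
\end{align*}
and it suffices to establish uniform integrability of $\{\sup_t|\widetilde{M}^{k,i}_t|^2\}_k$ and $\{G^{k,i}_\infty\}_k$.

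For the martingale family, the $\mathbb{L}^1$-convergence $\mathrm{Tr}[\langle \widetilde{M}^{k,i}\rangle_\infty]\to\mathrm{Tr}[\langle \widetilde{M}^{\infty,i}\rangle_\infty]$ extracted from \eqref{MVStab:con.3}, combined with the martingale isometry $\mathbb{E}[|\widetilde{M}^{k,i}_{T^k}|^2]=\mathbb{E}[\mathrm{Tr}[\langle \widetilde{M}^{k,i}\rangle_\infty]]$ and convergence in probability from \eqref{MVStab:con.1}, yields $\widetilde{M}^{k,i}_{T^k}\to \widetilde{M}^{\infty,i}_{T^\infty}$ in $\mathbb{L}^2$ via Vitali's theorem; Doob's $\mathbb{L}^2$-maximal inequality then transfers this to the supremum, producing $\mathbb{L}^1$-convergence (hence uniform integrability) of $\sup_t|\widetilde{M}^{k,i}_t|^2$. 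For the driver family, the Lipschitz property \ref{H4} with the bounds $r^k/(\alpha^k)^2\le(\alpha^k)^2$, $\vartheta^{k,\circ}/(\alpha^k)^2,\vartheta^{k,\natural}/(\alpha^k)^2\le 1$, $\vartheta^{k,*}/(\alpha^k)^2\le(\alpha^k)^2$, combined with \cref{lem:Gamma_is_Lipschitz} applied to $U^{k,i}-0$, dominates $G^{k,i}_\infty$, up to a multiplicative constant, by
\begin{align*}
G^{k,i,(0)} + \mathrm{Tr}\bigl[\langle \widetilde{M}^{k,i}\rangle_\infty\bigr] + \int_0^\infty|Y^{k,i}_s|^2\,\mathrm{d}A^k_s + \int_0^\infty\mathbb{E}\!\left[|Y^{k,i}_s|^2\right]\,\mathrm{d}A^k_s,
\end{align*}
where the first summand is uniformly integrable by \ref{S6}, the second by \eqref{MVStab:con.3}, and the fourth is a uniformly bounded deterministic sequence by the \emph{a priori} estimates together with \ref{S7}.

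The main obstacle is the third summand, which is circularly linked to $\sup_s|Y^{k,i}_s|^2$ through the pointwise bound $\int|Y^{k,i}_s|^2\,\mathrm{d}A^k_s\le\overline{A}\sup_s|Y^{k,i}_s|^2$. I would close this loop by an absorption argument exploiting the strengthened contraction assumption \ref{S9}.\ref{S9.ii}: substituting the displayed domination of $\sup_t|Y^{k,i}_t|^2$ back into the right-hand side of the bound for $G^{k,i}_\infty$ produces a self-bounding inequality in which the coefficient of $\sup_s|Y^{k,i}_s|^2$ is strictly less than one (the same mechanism that underpins the contraction in \cref{prop:uniform_Picard_MVSDE}), so that $G^{k,i}_\infty$ can be dominated by a uniformly integrable combination of $G^{k,i,(0)}$, $\mathrm{Tr}[\langle \widetilde{M}^{k,i}\rangle_\infty]$, $|Y^{k,i}_0|^2$, and deterministic terms. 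The Vitali step for the martingale terminal values is otherwise classical, with the minor care required by the mismatch between $T^k$ and $T^\infty$ and the possible jumps of $Y^{\infty,i}$ being analogous to the treatment already performed in the proof of \cref{McKean--Vlasov Stability}.
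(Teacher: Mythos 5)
There is a genuine gap, and it sits exactly where you flagged it: the absorption step. The self-bounding inequality you obtain is pathwise, of the schematic form $\sup_s|Y^{k,i}_s|^2 \le (\text{u.i.\ terms}) + c\,\overline{A}^2\,\sup_s|Y^{k,i}_s|^2$, where the factor $\overline{A}^2$ comes from applying Cauchy--Schwarz twice (once to bound $\sup_t|L^{k,i}_t|^2$ by $\overline{A}\,G^{k,i}_\infty$, once to bound $\int_0^\infty|Y^{k,i}_s|^2\,\ud A^k_s$ by $\overline{A}\sup_s|Y^{k,i}_s|^2$) and $c$ collects triangle-inequality and Lipschitz constants (including the factor $2$ from \cref{lem:Gamma_is_Lipschitz}). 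Assumption \ref{S9}.\ref{S9.ii} gives no control whatsoever over $c\,\overline{A}^2$: the contraction constant $\widetilde{M}^{\Phi^k}(\hat{\beta})<1/4$ governs the weighted norms $\Vert\cdot\Vert_{\star,\hat{\beta},k,i}$ built from the stochastic exponential $\mathcal{E}(\hat{\beta}A^k)$, and that smallness does not transfer to the raw pathwise bound, where $\overline{A}$ from \ref{S7} may be arbitrarily large. So the coefficient in front of $\sup_s|Y^{k,i}_s|^2$ is not strictly less than one in general, and the loop does not close. The paper avoids the circularity altogether by running the argument on the Picard iterates: for each $q$, the analogue of your bound controls $G^{k,i,(q)}$ and $\sup_s|Y^{k,i,(q)}_s|^2$ by quantities at step $q-1$ (via \cref{lem:MVBSDE_Picard_Induction}.\ref{lem:MVBSDE_Picard_Induction iii.} and \cite[Lemma A.17]{papapantoleon2023stability}), so uniform integrability is established by induction in $q$; it is then transferred to the solution using the uniform-in-$k$ convergence $\sup_{k\ge k_{\star,0}}\mathbb{E}\big[\sup_s|Y^{k,i,(q)}_s-Y^{k,i}_s|^2\big]\to 0$ from \cref{prop:uniform_Picard_MVSDE}, since a family uniformly $\mathbb{L}^1$-approximated by uniformly integrable families is itself uniformly integrable. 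That is the missing idea.

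Two secondary points. First, extracting $\mathbb{L}^1$-convergence of $\mathrm{Tr}[\langle\widetilde{M}^{k,i}\rangle_\infty]$ from \eqref{MVStab:con.3} is not licit as stated: the $(\textup{J}_1,\mathbb{L}^1)$ mode defined in the paper is the expectation of the (bounded) $\textup{J}_1$ distance, which does not yield $\mathbb{L}^1$-convergence of terminal values; the correct source for the martingale part is the route via \ref{S5}, \cref{lem:MVBSDE_Picard_Induction}, \cref{prop:uniform_Picard_MVSDE} and \cite[Theorem 2.16]{papapantoleon2019stability}. Second, $Y^{k,i}_0$ need not be deterministic, though this is harmless. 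Neither of these would matter if the absorption step were sound, but as it stands the proof does not go through.
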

\begin{proof}
Let $i\in\mathbb{N}$.
Initially, we will show that the random variables 
$\big\{\sup_{s \in \mathbb{R}_+}|Y^{k,i,(q)}_s|^2\big\}_{k \in \mathbb{N}}$ are uniformly integrable,  for every $q \in \mathbb{N}$. 
This will be established by induction. 
The first step is obvious; let us recall that, using \cref{Remark 3.1}.\ref{Remark 3.1 iii} for every $k \in \mathbb{N}$, we have $C^{{k}}_t = C^{{k}}_{T^k}$ for $t \in [T^k,\infty)$. 
Using Picard's scheme, we know that for every $q \in \mathbb{N}$ and $t \in \mathbb{R}_+$ we have
\begin{align*}
     Y^{k,i,(q)}_t 
     &= \mathbb{E}\Big[\xi^{k,i} + \int^{\infty}_{t}f^k\left(s,Y^{k,i,(q-1)}_s,Z^{k,i,(q-1)}_s  c^{k}_s,\Gamma^{k,i}(U^{k,i,(q-1)}_s)_s,\mathcal{L}(Y^{k,i,(q-1)}_s)\right) \, \ud C^{k}_s\Big|\mathcal{F}^{k,i}_t\Big]\\
     &\begin{multlined}[0.9\textwidth]
         = \mathbb{E}\Big[\xi^{k,i} + \int^{\infty}_{0}f^k\left(s,Y^{k,i,(q-1)}_s,Z^{k,i,(q-1)}_s  c^{k}_s,\Gamma^{k,i}(U^{k,i,(q-1)}_s)_s,\mathcal{L}(Y^{k,i,(q-1)}_s)\right) \, \ud C^{k}_s\Big|\mathcal{F}^{k,i}_t\Big]\\
     - \int^{t}_{0}f^k\left(s,Y^{k,i,(q-1)}_s,Z^{k,i,(q-1)}_s  c^{k}_s,\Gamma^{k,i}(U^{k,i,(q-1)}_s)_s,\mathcal{L}(Y^{k,i,(q-1)}_s)\right) \, \ud C^{k}_s.
     \end{multlined}
\end{align*}
Hence we get,
\begin{multline}
\label{ineq 5.1}
\sup_{t\in \mathbb{R}_+}|Y^{k,i,(q)}_s|^2 \leq 4 \sup_{t\in \mathbb{R}_+}\big|\mathbb{E}[\xi^{k,i}|\mathcal{F}^{k,i}_t]\big|^2 \\
  +4 \sup_{t\in \mathbb{R}_+}\bigg|\mathbb{E}\bigg[ \int^{\infty}_{0}f^k\left(s,Y^{k,i,(q-1)}_s,Z^{k,i,(q-1)}_s  c^{k}_s,\Gamma^{k,i}(U^{k,i,(q-1)}_s)_s,\mathcal{L}(Y^{k,i,(q-1)}_s)\right) \, \ud C^{k}_s\bigg|\mathcal{F}^{k,i}_t\bigg]\bigg|^2\\
   +2 \sup_{t\in \mathbb{R}_+} 
   \bigg|\int^{t}_{0}f^k\left(s,Y^{k,i,(q-1)}_s,Z^{k,i,(q-1)}_s  c^{k}_s,\Gamma^{k,i}(U^{k,i,(q-1)}_s)_s,\mathcal{L}(Y^{k,i,(q-1)}_s)\right) \, \ud C^{k}_s\bigg|^2.
\end{multline}
Now, from the Cauchy--Schwarz inequality, and recalling \ref{S7}, \emph{i.e.}, $\sup_{k \in \mathbb{N}}\left\{A^{k}_{\infty}\right\} \leq \overline{A} \in \mathbb{R}_+$, 
we have for every $t \in \mathbb{R}_+ \cup \{\infty\}$
 \begin{multline*}
     \left|\int^{t}_{0}f^k\left(s,Y^{k,i,(q-1)}_s,Z^{k,i,(q-1)}_s  c^{k}_s,\Gamma^{k,i}(U^{k,i,(q-1)}_s)_s,\mathcal{L}(Y^{k,i,(q-1)}_s)\right) \, \ud C^{k}_s\right|^2\\
     \leq \overline{A} \int^{\infty}_{0}\frac{\left|f^k\left(s,Y^{k,i,(q-1)}_s,Z^{k,i,(q-1)}_s  c^{k}_s,\Gamma^{k,i}(U^{k,i,(q-1)}_s)_s,\mathcal{L}(Y^{k,i,(q-1)}_s)\right)\right|^2}{(\alpha^k_s)^2} \, \ud C^{k}_s.
 \end{multline*}
Moreover, using \cref{lem:MVBSDE_Picard_Induction}.\ref{lem:MVBSDE_Picard_Induction iii.}, \ref{S5} and \cite[Lemma A.17]{papapantoleon2023stability}, we get that the right hand side of inequality \eqref{ineq 5.1} consists of elements of sequences of uniformly integrable random variables.
Hence, our first claim is proved.

Therefore, for every $q \in \mathbb{N}$ and $\varepsilon > 0$ there exists $\delta(\varepsilon,q) > 0$ such that for every $S \in \mathcal{F}$ with $\mathbb{P}(S) \leq \delta(\varepsilon,q)$ we have
\begin{align*}
    \sup_{k \in \mathbb{N}}\mathbb{E}\big[\sup_{s \in \mathbb{R}_+}|Y^{k,i,(q)}_s|^2 \mathds{1}_S\big] \leq \frac{\varepsilon}{4}.
\end{align*}
Using \cref{prop:uniform_Picard_MVSDE}, there exists a $k_{\star,0}$ such that we can choose $q({\varepsilon}) \in \mathbb{N}$ large enough such that 
\begin{align*}
    \sup_{k \geq k_{\star,0}}\mathbb{E}\big[\sup_{s \in \mathbb{R}_+}|Y^{k,i,(q({\varepsilon}))}_s -Y^{k,i}_s|^2\big] \leq \frac{\varepsilon}{4}.
\end{align*} 
Then, we have that for every $S \in \mathcal{F}$ with $\mathbb{P}(S) \leq \delta(\varepsilon,{q({\varepsilon})})$ we get
\begin{align*}
\mathbb{E}\big[\sup_{s \in \mathbb{R}_+}|Y^{k,i}_s|^2 \mathds{1}_S\big] &= \mathbb{E}\big[\sup_{s \in \mathbb{R}_+}|Y^{k,i}_s - Y^{k,i,(q({\varepsilon}))}_s + Y^{k,i,(q({\varepsilon}))}_s|^2 \mathds{1}_S\big] \\
     &\leq 2 \mathbb{E} \big[\sup_{s \in \mathbb{R}_+}|Y^{k,i}_s - Y^{k,i,(q({\varepsilon}))}_s |^2\big] 
     + 2\mathbb{E}\big[\sup_{s \in \mathbb{R}_+}|Y^{k,i,(q({\varepsilon}))}_s|^2 \mathds{1}_S\big]
     \leq \varepsilon,
\end{align*}
for every $k \in \mathbb{N}$. 
Therefore, using the result above, the fact that $\sup_{k \in \mathbb{N}}\mathbb{E}\big[\sup_{s \in \mathbb{R}_+}|Y^{k,i}_s|^2\big] < \infty$, and recalling \cref{prop:uniform_Picard_MVSDE}, we can conclude.
\end{proof}
    
\begin{lemma}\label{lemma:convergence_tildeYkn}
Let $\widetilde{\mathbf{Y}}^{k,N}$ be the vector of the $Y$--component of the solutions of the first $N$ McKean--Vlasov BSDEs under the data $\mathscr{D}^k$, for $k\in\overline{\mathbb{N}}$ and $N\in\mathbb{N}$.
Then,
\begin{align*}
    \lim_{N\to\infty} \sup_{k\in\overline{\mathbb{N}},s\in\mathbb{R}_+}
        \mathbb{E}\Big[ W_{2,|\cdot|}^2\Big(    L^N\big({\widetilde{\textup{\textbf{Y}}}}^{k,N}_s\big), \mathcal{L}\big( Y^{k,1}_s\big) \Big)\Big] = 0.
\end{align*}
\end{lemma}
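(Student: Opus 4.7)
Because the data $\mathscr{D}^k$ satisfy \ref{H1} and \ref{H2}, the filtrations $\{\mathbb{F}^{k,i}\}_{i\in\mathbb{N}}$ are independent and the drivers of the McKean--Vlasov BSDEs \eqref{MVBSDE} for different $i$ are, by the well-posedness asserted in \cite[Theorem~4.8]{PST_BackPropagChaos}, measurable images of the i.i.d. pairs $(\overline{X}^{k,i},\xi^{k,i})$ under the same map. Hence, for every fixed $k\in\overline{\mathbb{N}}$ and $s\in\mathbb{R}_+$, the random variables $\{Y^{k,i}_s\}_{i\in\mathbb{N}}$ are i.i.d. with common law $\mathcal{L}(Y^{k,1}_s)$. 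The plan is then to combine a truncation argument based on the uniform integrability provided by \cref{lemma 2.2} with a classical quantitative bound on the Wasserstein distance between an empirical measure and its parent distribution for bounded i.i.d. samples.

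Fix $\varepsilon>0$. From \cref{lemma 2.2} (extended to $k=\infty$ via $Y^{\infty,1}\in\mathcal{S}^{2,\infty,1}$) one has, using the trivial bound $|Y^{k,1}_s|^2\mathds{1}_{\{|Y^{k,1}_s|>R\}}\leq \sup_{s'\in\mathbb{R}_+}|Y^{k,1}_{s'}|^2\mathds{1}_{\{\sup_{s'}|Y^{k,1}_{s'}|^2>R^2\}}$, the \emph{joint} uniform integrability
\begin{align*}
\lim_{R\to\infty}\sup_{k\in\overline{\mathbb{N}},\,s\in\mathbb{R}_+}\mathbb{E}\!\left[|Y^{k,1}_s|^2\mathds{1}_{\{|Y^{k,1}_s|>R\}}\right]=0.
\end{align*}
Pick $R=R(\varepsilon)$ so that the supremum above is bounded by $\varepsilon$. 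Introduce the $1$--Lipschitz truncation $T_R:\mathbb{R}^d\to\mathbb{R}^d$ defined by $T_R(y):=y$ if $|y|\leq R$ and $T_R(y):=Ry/|y|$ otherwise, and notice that $|y-T_R(y)|\leq |y|\mathds{1}_{\{|y|>R\}}$.

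Applying the triangle inequality for $\mathcal{W}_{2,|\cdot|}$ and the elementary inequality $(a+b+c)^2\leq 3(a^2+b^2+c^2)$, one has
\begin{align*}
\mathcal{W}_{2,|\cdot|}^2\big(L^N({\widetilde{\textbf{Y}}}^{k,N}_s),\mathcal{L}(Y^{k,1}_s)\big)
&\leq 3\,\mathcal{W}_{2,|\cdot|}^2\big(L^N({\widetilde{\textbf{Y}}}^{k,N}_s),L^N(T_R({\widetilde{\textbf{Y}}}^{k,N}_s))\big)\\
&\phantom{{}\leq{}}+3\,\mathcal{W}_{2,|\cdot|}^2\big(L^N(T_R({\widetilde{\textbf{Y}}}^{k,N}_s)),\mathcal{L}(T_R(Y^{k,1}_s))\big)\\
&\phantom{{}\leq{}}+3\,\mathcal{W}_{2,|\cdot|}^2\big(\mathcal{L}(T_R(Y^{k,1}_s)),\mathcal{L}(Y^{k,1}_s)\big).
\end{align*}
The first summand is controlled by \eqref{empiricalineq} and the truncation bound: after taking expectations and using that the $Y^{k,i}_s$ are identically distributed in $i$, it is at most $\mathbb{E}[|Y^{k,1}_s|^2\mathds{1}_{\{|Y^{k,1}_s|>R\}}]\leq\varepsilon$. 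The third summand is bounded by the same quantity by taking $\pi=\mathcal{L}(Y^{k,1}_s,T_R(Y^{k,1}_s))$ as coupling. Both estimates are uniform in $(k,s)\in\overline{\mathbb{N}}\times\mathbb{R}_+$.

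The middle summand is the expected squared Wasserstein distance between the empirical measure of $N$ i.i.d. samples of $T_R(Y^{k,1}_s)$ and the common law $\mathcal{L}(T_R(Y^{k,1}_s))$, which is supported in the closed ball of radius $R$. By the quantitative Fournier--Guillin bound (see \cite[Theorem~1]{fournier2015rate}), there exists a function $\psi_R(N)\downarrow 0$ as $N\to\infty$, depending only on $R$ and on $d$, such that
\begin{align*}
\mathbb{E}\big[\mathcal{W}_{2,|\cdot|}^2\big(L^N(T_R({\widetilde{\textbf{Y}}}^{k,N}_s)),\mathcal{L}(T_R(Y^{k,1}_s))\big)\big]\leq \psi_R(N)
\end{align*}
uniformly over every probability measure supported in the ball of radius $R$; in particular, uniformly in $(k,s)\in\overline{\mathbb{N}}\times\mathbb{R}_+$. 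Combining the three estimates yields
\begin{align*}
\sup_{k\in\overline{\mathbb{N}},\,s\in\mathbb{R}_+}\mathbb{E}\big[\mathcal{W}_{2,|\cdot|}^2\big(L^N({\widetilde{\textbf{Y}}}^{k,N}_s),\mathcal{L}(Y^{k,1}_s)\big)\big]\leq 6\varepsilon+3\psi_R(N).
\end{align*}
Letting $N\to\infty$ and then $\varepsilon\to 0$ concludes the proof. The only delicate point is the joint uniform integrability across both $k$ and $s$: uniformity in $k\in\mathbb{N}$ is supplied by \cref{lemma 2.2}, while uniformity in $s$ is obtained from the pathwise supremum appearing there, and the case $k=\infty$ is absorbed into the family since $Y^{\infty,1}$ belongs to $\mathcal{S}^{2,\infty,1}$.
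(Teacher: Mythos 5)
Your proof is correct, but it reaches the conclusion by a genuinely different route than the paper. Both arguments rest on the same two pillars — the uniform integrability of $\{\sup_{s}|Y^{k,1}_s|^2\}_{k}$ from \cref{lemma 2.2} (which gives the tail control uniformly in $(k,s)$, with the single element $k=\infty$ absorbed harmlessly, as you note) and the quantitative empirical-measure estimates of \citet{fournier2015rate} applied to the i.i.d. family $\{Y^{k,i}_s\}_{i\in\mathbb{N}}$. The difference is where the truncation happens. The paper cannot invoke \citet[Theorem 1]{fournier2015rate} directly, because that statement requires a moment of order $q>p=2$ while only square integrability is available here; it therefore descends to the dyadic decomposition of \citet[Lemma 6 and inequality (4)]{fournier2015rate}, bounds the annulus masses $\mathcal{L}(Y^{k,1}_s)(B_m)$ by $2^{-2(m-1)}M_2(Y^{k,1}_s,m)$, and controls the tail of the resulting double series by hand through explicit cutoffs $\ell_1(\varepsilon_1)$, $\ell_2(\varepsilon_2)$ and $N(\varepsilon)$. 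You instead truncate the random variables themselves with the $1$--Lipschitz map $T_R$, pay the truncation error twice (once for the empirical measure via \eqref{empiricalineq} and identical distribution in $i$, once for the law via the coupling $\mathcal{L}(Y^{k,1}_s,T_R(Y^{k,1}_s))$), and then apply \citet[Theorem 1]{fournier2015rate} as a black box to the compactly supported truncated laws, where all higher moments are trivially bounded by powers of $R$ and the rate $\psi_R(N)$ is uniform over that class. Your version is shorter and more modular, and it sidesteps the bookkeeping with explicit dyadic constants; the paper's version stays closer to the internal structure of the Fournier--Guillin proof and yields, in principle, a more explicit quantitative threshold $N(\varepsilon)$. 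Both are sound.
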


\begin{proof}
Define, for every $m\in\mathbb{N}$,
\begin{align*}
    M_2 \big( Y^{k,1}_s,m \big) 
    := \int_{B_m}|x|^2\, \textup{d} \mathcal{L}\big( Y^{k,1}_s\big)(x),
\end{align*} 
where $B_m := (-2^{m},2^m]^d \setminus (-2^{m - 1},2^{m - 1}]^d$ and also $B_0 := (-1,1]^d$. 
Let $\ell\in \mathbb{N}$, then for every $k \in \mathbb{N}$ and $s \in \mathbb{R}_+$ we have
\begin{align}
\sum_{m = \ell}^{\infty} M_2\big(Y^{k,1}_s,m\big) 
&= \int_{\bigcup_{m = \ell}^{\infty} B_m}|x|^2 \, \textup{d} \mathcal{L}\big( Y^{k,1}_s\big)(x)
    = \int_{\mathbb{R}^d}|x|^2 \mathds{1}_{[2^{\ell-1},\infty)}(x) 
    \, \textup{d} \mathcal{L}\big( Y^{k,1}_s\big)(x) \nonumber\\
&\leq \int_{\mathbb{R}^d}|x|^2 \mathds{1}_{[2^{2(\ell-1)},\infty)}(|x|^2) \, \textup{d} \mathcal{L}\big( Y^{k,1}_s\big)(x) 
    = \mathbb{E}\Big[\big|Y^{k,1}_s\big|^2 
        \mathds{1}_{\big\{|Y^{k,1}_s|^2 \geq 2^{2(\ell -1)}\big\}}
        \Big] \nonumber\\
&\leq \mathbb{E}\bigg[\sup_{s \in \mathbb{R}_+}\big\{|Y^{k,1}_s|^2\big\} \mathds{1}_{\big\{\sup_{s \in \mathbb{R}_+}|Y^{k,1}_s|^2 \geq 2^{2(\ell-1)}\big\}}\bigg] \label{2.1}.
\end{align}
Using \cref{lemma 2.2}, the random variables 
$\big\{\sup_{s \in \mathbb{R}_+}|Y^{k,1}_s|^2\big\}_{k \in \mathbb{N}}$ are uniformly integrable. 
Hence, using this information on inequality \eqref{2.1}, for every positive number $\varepsilon_1$, we can choose $\ell_1({\varepsilon_1}) \in \mathbb{N}$, which is universal with respect to $k \in \mathbb{N}$ and $s \in \mathbb{R}_+$, such that 
\begin{align}\label{aux_ineq:M2m}
    \sum_{m = \ell_1({\varepsilon_1}) + 1}^{\infty}M_2(Y^{k,1}_s,m) < \varepsilon_1.
\end{align}
Moreover, for every positive $\varepsilon_2$, there exists $\ell_2(\varepsilon_2)\in\mathbb{N}$ such that 
\begin{align}\label{aux_ineq:Series-2l}
\sum_{l = \ell_2({\varepsilon_2})+ 1}^{\infty} 2^{-2l} < \varepsilon_2.
\end{align}
Let $\mathcal{R}_0 := \sup_{k \in \mathbb{N}}\mathbb{E}\big[\sup_{s \in \mathbb{R}_+}|Y^{k,1}_s|^2\big] < \infty$. 
Then, trivially, for every $k \in \mathbb{N}$ and $s \in \mathbb{R}_+$ we have $M_2(Y^{k,1}_s,m) \leq \mathcal{R}_0.$
Using \citet[Lemma 6 and Inequality (4) on p. 716]{fournier2015rate} 
 we get
\begin{align*}
\mathbb{E}\Big[W_{2,|\cdot|}^2\Big(L^N\big({\widetilde{\textup{\textbf{Y}}}}^{k,N}_s\big),\mathcal{L}\big( Y^{k,1}_s\big)\Big)\Big]
\leq C_{d} 
\sum_{m = 0}^{\infty} 2^{2m}
\sum_{l = 0}^{\infty}2^{-2l}\min\bigg\{2\mathcal{L}\big( Y^{k,1}_s\big)(B_m),2^{\frac{dl}{2}}\Big(\frac{1}{N}\mathcal{L}\big( Y^{k,1}_s\big)(B_m)\Big)^{\frac{1}{2}}\bigg\},
\end{align*}
where $C_{d}$ is a constant that depends on the dimension. 
Trivially, for every $k \in \mathbb{N}$ and $s \in \mathbb{R}_+$, we have for $m=0$
\begin{align*}
    \mathcal{L}\big(Y^{k,1}_s\big) (B_0) 
    \leq 1
\end{align*}
and for $m \in \mathbb{N}$ 
\begin{align}\label{bound:M2m}
   \mathcal{L}\big( Y^{k,1}_s\big)(B_m) 
   = \int_{B_m}1\,\textup{d} \mathcal{L}\big( Y^{k,1}_s\big)(x)
   \leq \int_{B_m}2^{-2(m-1)}|x|^2 \,\textup{d} \mathcal{L}\big( Y^{k,1}_s\big)(x) = 2^{-2(m-1)}M_2(Y^{k,1}_s,m).
\end{align}  

Let us now fix $\varepsilon >0$ and define 
\begin{align}\label{def:varepsilon_1}
    \varepsilon_1 := \frac{\varepsilon}{33C_{d}}.
\end{align}
A natural number $\ell_1(\varepsilon_1)$ corresponds to the defined $\varepsilon_1$, such that \eqref{aux_ineq:M2m} holds.
Next, to
\begin{align}\label{def:varepsilon_2}
    \varepsilon_2 := \frac{\varepsilon}{\Big(6 + 24 \ell_1(\varepsilon_1) \mathcal{R}_0\Big)C_{d}}
\end{align}
we can associate a natural number $\ell_2(\varepsilon_2)$ such that \eqref{aux_ineq:Series-2l} holds.
Finally, we define 
\begin{align}\label{def:N_varepsilon}
N(\varepsilon) := \left \lfloor\frac{9 C_{d}^2 2^{d\ell_2(\varepsilon_2) + 2}\Big(2^{\ell_1(\varepsilon_1) + 1} \ell_1(\varepsilon_1) {\mathcal{R}_0}^{\frac{1}{2}} + 1\Big)^2}{\varepsilon^2}\right\rfloor + 1.\footnotemark 
\end{align}%
\footnotetext{We denote by $\lfloor x \rfloor$ the integer part of the real number $x$.}%
Collecting the information presented above, we can deduce that for the fixed $\varepsilon > 0$ we have (we omit the dependencies for the sake of a readable notation), for every $ N \geq N({\varepsilon})$, universal with respect to $k \in \mathbb{N}$ and $s \in \mathbb{R}_+$
\begin{align*}
&\mathbb{E}\left[
    W_{2,|\cdot|}^2
    \Big(L^N\big({\widetilde{\textup{\textbf{Y}}}}^{k,N}_s\big),
    \mathcal{L}\big( Y^{k,1}_s\big)\Big)\right]\\
&\begin{multlined}[0.9\textwidth]
    \hspace{0.2cm}\overset{\eqref{bound:M2m}}{\leq} 
    C_{d} \sum_{l=0}^{\infty}2^{-2l} \min\big\{2,2^\frac{dl}{2}N^{-\frac{1}{2}}\big\}\\
    + C_{d} \sum_{m = 1}^{\infty} 2^{2m}
    \sum_{l = 0}^{\infty}2^{-2l}
    \min\bigg\{2^{-2(m-1) + 1}
    M_2\big(Y^{k,1}_s,m\big),2^{\frac{dl}{2}}\Big(\frac{2^{-2{(m-1)}}}{N}M_2\big(Y^{k,1}_s,m\big)\Big)^{\frac{1}{2}}\bigg\}
\end{multlined}\\
&\begin{multlined}[0.9\textwidth]
\hspace{0.2cm}
\overset{\phantom{\eqref{bound:M2m}}}{\leq} 
C_{d} \Big[2
    \sum_{l=\ell_2 + 1}^{\infty}2^{-2l}
    +\frac{4}{3} 2^{\frac{d\ell_2}{2}} N^{-\frac{1}{2}} 
    \Big]
    + 11 C_{d}
    \sum_{m = \ell_1 + 1}^{\infty}
    M_2\big(Y^{k,1}_s,m\big)\\  
   + C_{d} 
   \sum_{m = 1}^{\ell_1}
   \bigg[8\mathcal{R}_0
   \sum_{l = \ell_2+ 1}^{\infty} 2^{-2l}
   + 2^{\ell_1}\frac{4}{3} 2^{\frac{d\ell_2}{2}+1}
   \Big(\frac{\mathcal{R}_0}{N}\Big)^{\frac{1}{2}}\bigg]
   \end{multlined}\\
&\begin{multlined}[0.9\textwidth]\hspace{0.2cm} 
\overset{\phantom{\eqref{bound:M2m}}}{\leq} 
C_{d} \frac{4}{3}2^{\frac{d\ell_2}{2}} N^{-\frac{1}{2}} 
    + 2 C_{d} \varepsilon_2
    + 11 C_{d} \varepsilon_1 
   + 8 C_{d} \ell_1\mathcal{R}_0\varepsilon_2 + C_{d} \ell_1 \frac{4}{3} 2^{\frac{d\ell_2}{2}+1+\ell_1}\Big(\frac{\mathcal{R}_0}{N}\Big)^{\frac{1}{2}}
   \end{multlined}\\
   &\begin{multlined}[0.9\textwidth]\hspace{0.2cm}
    \overset{\phantom{\eqref{bound:M2m}}}{\leq} 
   11 C_{d} \varepsilon_1 
   + \Big(2 + 8 \ell_1 \mathcal{R}_0\Big) C_{d} \varepsilon_2
   +  \frac{C_{d}2^{\frac{d\ell_2}{2} + 1}\Big(2^{\ell_1 + 1} \ell_1 {\mathcal{R}_0}^{\frac{1}{2}} + 1\Big)}{N^{\frac{1}{2}}}
   \end{multlined} 
    \overset{\phantom{\eqref{bound:M2m}}}{\leq} 
   \varepsilon,
\end{align*}
where the last inequality follows from \eqref{def:varepsilon_1}, \eqref{def:varepsilon_2} and \eqref{def:N_varepsilon}.    
\end{proof}

\begin{remark}
The approach we used in the proof of \Cref{lemma:convergence_tildeYkn} was inspired by the approach used in \citet{fournier2015rate}. 
There, the authors used advanced integrability assumptions in order to control the tails of the distributions and get rates of convergence for the different cases. 
In our setting we had to work with sharp square integrability; this is possible by noticing that we can bound $ \mathcal{L}\big( Y^{k,1}_s\big)(B_m)$ from the quantities $2^{-2(m-1)}M_2(Y^{k,1}_s,m)$, and thus make them controllable. 
Then, we proceeded in a similar fashion as the authors of \cite{fournier2015rate} did, while ensuring that our bounds are universal with respect to $k$ by virtue of \cref{lemma 2.2}.
\end{remark}

\begin{theorem}[Uniform propagation of chaos for the system]\label{Unif. Prop. Chaos of the system}
Assume that the setting of \cref{Prop. of Chaos Stability} is in force. 
Then, the following holds
\begin{align*}
    \lim_{N \rightarrow \infty} \sup_{k \in \mathbb{N}}\left\{
   \frac{1}{N}\sum_{i = 1}^{N} \Big\|\left(Y^{k,i,N} - Y^{k,i},Z^{k,i,N} - Z^{k,i},U^{k,i,N} - U^{k,i},M^{k,i,N} - M^{k,i}\right) \Big\|^{2}_{\star,\hat{\beta},{k,(1,\dots,N)}}\right\} = 0.
\end{align*}
\end{theorem}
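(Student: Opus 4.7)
The plan is to redo the proof of the (single $k$) backward propagation of chaos from \cite[Theorem 6.3]{PST_BackPropagChaos} while tracking all constants and showing they are uniformly controlled in $k$, and then to identify the residual Wasserstein error term with the one controlled by \cref{lemma:convergence_tildeYkn}. The starting point is the BSDE \emph{a priori} estimate applied to the difference $\mathscr{S}^{k,i,N}-\mathscr{S}^{k,i}$: subtracting \eqref{MVBSDE} from \eqref{mfBSDE}, applying It\^o's formula to $\mathcal{E}(\hat{\beta}A^{k})_{\cdot-}\,|Y^{k,i,N}-Y^{k,i}|^2$ on $[0,T^k]$, using the Lipschitz property \ref{H4} of $f^k$ together with Young's inequality, and invoking the same algebraic manipulations as in \cite[Lemma 3.1]{PST_BackPropagChaos}, one obtains an inequality of the form
\begin{align*}
    \|\mathscr{S}^{k,i,N}-\mathscr{S}^{k,i}\|^2_{\star,\hat\beta,k,(1,\dots,N)}
    &\le C_1(\hat\beta,\Phi^k)\,\|\xi^{k,i,N}-\xi^{k,i}\|^2_{\mathbb{L}^{2,k,(1,\dots,N)}_{\hat\beta}} \\
    &\quad + C_2(\hat\beta,\Phi^k)\,\mathbb{E}\!\left[\int_0^{T^k}\!\mathcal{E}(\hat\beta A^{k})_{s-}\,W^2_{2,|\cdot|}\!\big(L^N(\mathbf{Y}^{k,N}_s),\mathcal{L}(Y^{k,i}_s)\big)\,\ud A^{k}_s\right],
\end{align*}
where $C_1,C_2$ are explicit functions of $\widetilde{M}^{\Phi^k}(\hat\beta)$ and $\hat\beta$. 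By \ref{S9}.\ref{S9.i}--\ref{S9.ii}, $\Phi^k\to 0$ and $\widetilde{M}^{\Phi^k}(\hat\beta)<1/4$ for all $k$ large enough, so there exists $k_{\star}\in\mathbb{N}$ such that $\sup_{k\ge k_\star}\max\{C_1(\hat\beta,\Phi^k),C_2(\hat\beta,\Phi^k)\}<\infty$.

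Next, I would split the Wasserstein error using the triangle inequality and \eqref{empiricalineq}:
\begin{align*}
    W^2_{2,|\cdot|}\!\big(L^N(\mathbf{Y}^{k,N}_s),\mathcal{L}(Y^{k,i}_s)\big)
    \le 2\,W^2_{2,|\cdot|}\!\big(L^N(\mathbf{Y}^{k,N}_s),L^N(\widetilde{\mathbf{Y}}^{k,N}_s)\big) + 2\,W^2_{2,|\cdot|}\!\big(L^N(\widetilde{\mathbf{Y}}^{k,N}_s),\mathcal{L}(Y^{k,1}_s)\big),
\end{align*}
where the first summand is bounded by $\frac{2}{N}\sum_{j=1}^{N}|Y^{k,j,N}_s-Y^{k,j}_s|^2$, and where I used that the i.i.d.\ structure of $\{\overline{X}^{k,i}\}_{i\in\mathbb{N}}$ (see \ref{H1}) together with the i.i.d.\ terminal conditions $\{\xi^{k,i}\}_{i\in\mathbb{N}}$ (see \ref{H2}) forces $\mathcal{L}(Y^{k,i}_s)=\mathcal{L}(Y^{k,1}_s)$ for every $i\in\mathscr{N}$. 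Averaging the a priori estimate over $i\in\mathscr{N}$ and inserting this decomposition yields
\begin{align*}
    \frac{1}{N}\sum_{i=1}^{N}\|\mathscr{S}^{k,i,N}-\mathscr{S}^{k,i}\|^2_{\star,\hat\beta,k,(1,\dots,N)}
    &\le \widetilde{C}_1\,\frac{1}{N}\sum_{i=1}^{N}\|\xi^{k,i,N}-\xi^{k,i}\|^2_{\mathbb{L}^{2,k,(1,\dots,N)}_{\hat\beta}} \\
    &\quad + \widetilde{C}_2\,\sup_{s\in[0,T^k]}\mathbb{E}\!\left[W^2_{2,|\cdot|}\!\big(L^N(\widetilde{\mathbf{Y}}^{k,N}_s),\mathcal{L}(Y^{k,1}_s)\big)\right],
\end{align*}
after an application of Gronwall's lemma (or, equivalently, by absorbing the remaining linear-in-$Y$ term on the left using the exponential weight $\mathcal{E}(\hat\beta A^{k})$), with $\widetilde{C}_1,\widetilde{C}_2$ independent of $k\ge k_\star$ and $N\in\mathbb{N}$.

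It remains to take the supremum over $k\ge k_\star$ and let $N\to\infty$. The first term on the right-hand side vanishes by \ref{H2} and \ref{S4}, after noting that the unweighted $\mathbb{L}^2$-norm dominates the $\mathcal{E}(\hat\beta A^k)$-weighted one up to the universal factor $\ue^{\hat\beta\overline{A}}$ by \ref{S7}; the second term vanishes by \cref{lemma:convergence_tildeYkn}. The finitely many remaining indices $k\in\{1,\ldots,k_\star-1\}$ are then absorbed into the supremum by applying the classical (non-uniform) backward propagation of chaos \cite[Theorem 6.3]{PST_BackPropagChaos} separately to each one. The main obstacle is the uniform control of the constants $C_1(\hat\beta,\Phi^k)$ and $C_2(\hat\beta,\Phi^k)$ arising from the BSDE estimate; this is the very reason why \ref{H:prop_contraction} must be strengthened to \ref{S9}, and why \cref{lemma:convergence_tildeYkn}, whose proof relied on the finally uniform integrability of $\sup_{s}|Y^{k,1}_s|^2$ (\cref{lemma 2.2}), is indispensable here.
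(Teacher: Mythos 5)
Your proposal is correct and follows essentially the same route as the paper: the paper likewise starts from the explicit \emph{a priori} inequality established in the proof of \cite[Theorem~6.2]{PST_BackPropagChaos} (with constants $\big(26+\tfrac{2}{\hat\beta}+(9\hat\beta+2)\Phi^k\big)/\big(1-3\widetilde{M}^{\Phi^k}(\hat\beta)\big)$ and $2\widetilde{M}^{\Phi^k}(\hat\beta)/\big(1-3\widetilde{M}^{\Phi^k}(\hat\beta)\big)$, the factor $3$ reflecting exactly the absorption of the $W^2_{2,|\cdot|}\big(L^N(\mathbf{Y}^{k,N}),L^N(\widetilde{\mathbf{Y}}^{k,N})\big)$ piece that you describe), controls these constants uniformly via \ref{S9}, and concludes from \ref{S4}, \ref{S7} and \cref{lemma:convergence_tildeYkn}. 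The only difference is presentational: the paper quotes the ready-made inequality instead of re-deriving it via It\^o's formula, and writes the Wasserstein remainder as an integral against $\ud\mathcal{E}(\hat\beta A^k)_s$ rather than a supremum in $s$, which is equivalent up to the uniform bound $\mathcal{E}(\hat\beta A^k)_\infty\le \ue^{\hat\beta\overline{A}}$.
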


\begin{proof}
Using the proof of \citet[Theorem 6.2]{PST_BackPropagChaos}, we have that for every $k,N \in \mathbb{N}$ it holds
\begin{multline*}
   \frac{1}{N}\sum_{i = 1}^{N} \Big\|\left(Y^{k,i,N} - Y^{k,i},Z^{k,i,N} - Z^{k,i},U^{k,i,N} - U^{k,i},M^{k,i,N} - M^{k,i}\right) \Big\|^{2}_{\star,\hat{\beta},{k,(1,\dots,N)}} \\\leq \frac{\Big(26 + \frac{2}{\hat{\beta}}+ (9\hat{\beta} + 2)\Phi^k \Big)}{1 - 3\widetilde{M}^{\Phi^k}(\hat{\beta})} \frac{1}{N} \sum_{i = 1}^N 
\big\|\xi^{k,i,N} - \xi^{k,i}\big\|^2_{\mathbb{L}^{2,k,(1,\dots,N)}_{\hat{\beta}}(\mathbb{R}^d)}\\ +  \frac{2 \widetilde{M}^{\Phi^k}(\hat{\beta})}{1 - 3 \widetilde{M}^{\Phi^k}(\hat{\beta})} \frac{1}{\hat{\beta}}
\int_{0}^{\infty}
    \mathbb{E}\Big[
    W_{2,|\cdot|}^2\Big(
    L^N\big({\widetilde{\textup{\textbf{Y}}}}^{k,N}_s\big),
    \mathcal{L}\big( Y^{k,1}_s\big)
    \Big)\Big] \,\ud 
    \mathcal{E}\big(\hat{\beta} A^{k}\big)_{s},
\end{multline*}
where we have denoted by $\widetilde{\textup{\textbf{Y}}}^{k,N}$ the vector of the $Y$--component of the solutions of the first $N$ McKean--Vlasov BSDEs under the data $\mathscr{D}^k$.
The first summand of the right-hand side of the above inequality converges in view of \ref{S4} and \ref{S9}. 
The convergence of the second summand comes from \cref{lemma:convergence_tildeYkn} in conjunction with \ref{S7}, which implies $\sup_{k \in \mathbb{N}}\left\{\mathcal{E}\big(\hat{\beta} A^{k}\big)_{\infty}\right\} \leq e^{\hat{\beta}\overline{A}}$.
These combined with \ref{S4} and \ref{S9} allow us to conclude.
\end{proof}

\begin{theorem}[Uniform propagation of chaos]\label{Unif. Prop. Chaos}
Assume that the setting of \cref{Prop. of Chaos Stability} is in force. 
Then, for every $i \in \mathbb{N}$, the following holds
\begin{align*}
    \lim_{N \rightarrow \infty} \sup_{k \in \mathbb{N}}
    \Big\|\Big(Y^{k,i,N} - Y^{k,i},Z^{k,i,N} - Z^{k,i},U^{k,i,N} - U^{k,i},M^{k,i,N} - M^{k,i}\Big) \Big\|^{2}_{\star,\hat{\beta},{k,(1,\dots,N)}} = 0.
\end{align*}
\end{theorem}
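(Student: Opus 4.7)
The strategy is to establish a per-player \emph{a priori} bound that mirrors the averaged one underpinning \Cref{Unif. Prop. Chaos of the system}, and then close the argument by feeding that theorem, \Cref{lemma:convergence_tildeYkn}, and the equivalent (uniform-in-$k$) form of \ref{S4} back into the right-hand side. Crucially, the data are not assumed to be exchangeable in $i$, so the symmetry-based shortcut (``individual norm equals average'') is unavailable, and one must genuinely derive a per-player inequality.

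First, I would revisit the argument of \cite[Theorem~6.2]{PST_BackPropagChaos} but refrain from performing the symmetric sum over $i$. Applying It\^o's formula to $\mathcal{E}(\hat{\beta} A^k)_{t-}|Y^{k,i,N}_t-Y^{k,i}_t|^2$ on $[0,T^k]$ and invoking the Lipschitz property \ref{H4} of $f^k$, \Cref{lem:Gamma_is_Lipschitz} for the $\Gamma$-term, and the orthogonality identity \eqref{identity:norm_ZU} yields, for each $i\in\mathscr{N}$, an inequality of the form
\begin{align*}
\big\|\big(Y^{k,i,N}-Y^{k,i},\,Z^{k,i,N}-Z^{k,i},\,U^{k,i,N}-U^{k,i},\,M^{k,i,N}-M^{k,i}\big)\big\|^{2}_{\star,\hat{\beta},k,(1,\dots,N)}
\leq{}& K^k_1\,\big\|\xi^{k,i,N}-\xi^{k,i}\big\|^2_{\mathbb{L}^{2,k,(1,\dots,N)}_{\hat{\beta}}(\mathbb{R}^d)}\\
&{}+K^k_2\,\mathbb{E}\!\left[\int_{0}^{T^k}\!\!\mathcal{E}(\hat{\beta} A^k)_{s-}\,W^2_{2,|\cdot|}\!\big(L^N(\textbf{Y}^{k,N}_s),\mathcal{L}(Y^{k,1}_s)\big)\,\ud A^k_s\right],
\end{align*}
with $K^k_1,K^k_2$ depending only on $\hat{\beta}$ and $\Phi^k$ and, by \ref{S9}, uniformly bounded in $k\in\mathbb{N}$. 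This is precisely the per-$i$ precursor of the displayed bound used in the proof of \Cref{Unif. Prop. Chaos of the system}: averaging over $i\in\mathscr{N}$ and absorbing the diagonal contribution into the left-hand side recovers that inequality.

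Second, I would decompose the Wasserstein term via the triangle inequality in $\mathscr{P}_2(\mathbb{R}^d)$ together with \eqref{empiricalineq}:
\begin{align*}
W^2_{2,|\cdot|}\!\big(L^N(\textbf{Y}^{k,N}_s),\mathcal{L}(Y^{k,1}_s)\big)\leq \frac{2}{N}\sum_{j=1}^{N}\big|Y^{k,j,N}_s-Y^{k,j}_s\big|^2+2\,W^2_{2,|\cdot|}\!\big(L^N(\widetilde{\textup{\textbf{Y}}}^{k,N}_s),\mathcal{L}(Y^{k,1}_s)\big).
\end{align*}
Inserting this into the per-player bound, the integral of the first summand against $\mathcal{E}(\hat{\beta} A^k)_{s-}\,\ud A^k_s$ is controlled (via \eqref{eq 4.5}) by a uniform multiple of $\frac{1}{N}\sum_{j=1}^N\|(Y^{k,j,N}-Y^{k,j},\dots,M^{k,j,N}-M^{k,j})\|^{2}_{\star,\hat{\beta},k,(1,\dots,N)}$, which vanishes as $N\to\infty$ uniformly in $k$ by \Cref{Unif. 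Prop. Chaos of the system}; the second summand, thanks to the uniform bound $\mathcal{E}(\hat{\beta} A^k)_\infty\leq \ue^{\hat{\beta}\overline{A}}$ from \ref{S7}, is dominated by a uniform multiple of $\sup_{s\in\mathbb{R}_+}\mathbb{E}\big[W^2_{2,|\cdot|}(L^N(\widetilde{\textup{\textbf{Y}}}^{k,N}_s),\mathcal{L}(Y^{k,1}_s))\big]$, which vanishes as $N\to\infty$ uniformly in $k$ by \Cref{lemma:convergence_tildeYkn}. Combined with the uniform-in-$k$ convergence of $\|\xi^{k,i,N}-\xi^{k,i}\|^2$ to $0$ provided by the equivalent reformulation of \ref{S4} recorded after \ref{S10}, these three contributions together yield the claim. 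The only substantive step is the per-player inequality of Step~1; this is essentially a bookkeeping reformulation of the argument already carried out for the system, obtained by stopping the proof of \cite[Theorem~6.2]{PST_BackPropagChaos} one step before the symmetric averaging, and keeping the $i$-th inequality intact is precisely what allows \Cref{Unif. Prop. Chaos of the system} to re-enter as an input rather than as a competitor.
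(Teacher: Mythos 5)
Your proposal is correct and follows essentially the same route as the paper: the paper likewise invokes the per-player \emph{a priori} inequality from the proof of \cite[Theorem~6.3]{PST_BackPropagChaos}, whose right-hand side consists of exactly your three contributions (the terminal-condition term, the averaged system norm, and the Wasserstein term between $L^N(\widetilde{\textbf{Y}}^{k,N}_s)$ and $\mathcal{L}(Y^{k,1}_s)$), and then concludes from \ref{S4}, \ref{S9}, \Cref{Unif. Prop. Chaos of the system} and \Cref{lemma:convergence_tildeYkn}. Your explicit triangle-inequality decomposition of the Wasserstein term is precisely the step already absorbed into the cited inequality, so the two arguments coincide in substance.
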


\begin{proof}
Let $i\in\mathbb{N}$.
Using again the proof of \cite[Theorem 6.3]{PST_BackPropagChaos}, we get that
\begin{multline*}
\Big\|\big(Y^{k,i,N} - Y^{k,i},Z^{k,i,N} - Z^{k,i},U^{k,i,N} - U^{k,i},M^{k,i,N} - M^{k,i}\big) \Big\|^{2}_{\star,\hat{\beta},k,(1,\dots,N)} \nonumber\\
    \leq  \frac{\Big(26 + \frac{2}{\hat{\beta}}+ (9\hat{\beta} + 2)\Phi^k \Big)}{1-2\widetilde{M}^{\Phi^k}(\hat{\beta})}
    \big\|\xi^{k,i,N} - \xi^{k,i}\big\|^2_{\mathbb{L}^{2,k,(1,\dots,N)}_{\hat{\beta}}(\mathbb{R}^d)}\\
    + \frac{2 \widetilde{M}^{\Phi^k}(\hat{\beta})}{1 - 2 \widetilde{M}^{\Phi^k}(\hat{\beta})}
     \frac{1}{N}\sum_{i = 1}^{N} \Big\|\left(Y^{k,i,N} - Y^{k,i},Z^{k,i,N} - Z^{k,i},U^{k,i,N} - U^{k,i},M^{k,i,N} - M^{k,i}\right) \Big\|^{2}_{\star,\hat{\beta},{k,(1,\dots,N)}}\\
    + \frac{2 \widetilde{M}^{\Phi^k}(\hat{\beta})}{1 - 2 \widetilde{M}^{\Phi^k}(\hat{\beta})}
    \frac{1}{\hat{\beta}}
    \int_{0}^{\infty}
    \mathbb{E}\Big[
    W_{2,|\cdot|}^2\Big(
    L^N\big({\widetilde{\textup{\textbf{Y}}}}^{k,N}_s\big),
    \mathcal{L}\big( Y^{k,1}_s\big)
    \Big)\Big] \,\ud 
    \mathcal{E}\big(\hat{\beta} A^{k}\big)_{s}.
\end{multline*}
Then, from \ref{S4}, \ref{S9}, \cref{Unif. Prop. Chaos of the system} and \cref{lemma:convergence_tildeYkn} we can conclude.
\end{proof}

\begin{corollary}\label{Corollary 5.7}
Define, for every $k\in\overline{\mathbb{N}}$ and  $i,N \in \mathbb{N}$,
\begin{align*}
      L^{k,i,N}_\cdot := \int^{\cdot}_{0}f^k\left(s,Y^{k,i,N}_s,Z^{k,i,N}_s c^{k}_s,\Gamma^{k,(1,\dots,N)}(U^{k,i,N})_s,L^N(\textup{\textbf{Y}}^{k,N}_s)\right) \, \ud C^{k}_s.
\end{align*}
Then, the following are true:\footnote{The reader may also recall \ref{Notation5.2}.}
\begin{align}
  \label{Corollary 5.7. i}
  \sup_{k \in \mathbb{N}}
  \mathbb{E}\Big[\big|\textrm{\emph{Var}}\big[ L^{k,i,N} - L^{k,i}\big]_{\infty}
  \big|^2\Big] \xrightarrow[N \rightarrow \infty]{|\cdot|} 0,
\end{align}
where the total variation is calculated per coordinate,
\begin{gather}
    \label{Corollary 5.7. 00}
    \sup_{k\in \mathbb{N}}
    \mathbb{E}\Big[\big\|[L^{k,i,N}]_{\cdot} - [L^{k,i}]_{\cdot}\big\|_{\infty}\Big]
    \xrightarrow[N \rightarrow \infty]{|\cdot|}0,\\
    \label{Corollary 5.7 0}
    [L^{k,i,N}] \xrightarrow[(k,N) \rightarrow (\infty,\infty)]{\left(\textup{J}_1(\mathbb{R}^{d \times d}),  \mathbb{L}^1\right)} 0
\shortintertext{and}
    \label{Corollary 5.7. ii}
    Z^{k,i,N} \cdot X^{k,i,\circ}_{\infty} + U^{k,i,N} \star \widetilde{\mu}^{{k,i,\natural}}_{\infty} + M^{k,i,N}_{\infty} \xrightarrow[(k,N)\rightarrow (\infty,\infty)]{\mathbb{L}^2(\Omega,\mathcal{G},\mathbb{P},\mathbb{R}^d)} Z^{\infty,i} \cdot X^{\infty,i,\circ}_{\infty} + U^{\infty,i} \star \widetilde{\mu}^{{\infty,i,\natural}}_{\infty}.
\end{gather}
Finally, the sequence of random variables $\Big\{\big|Z^{k,i,N} \cdot X^{k,i,\circ}_{\infty} + U^{k,i,N} \star \widetilde{\mu}^{{k,i,\natural}}_{\infty} + M^{k,i,N}_{\infty}\big|^2\Big\}_{k \in \overline{\mathbb{N}}, N \in \mathbb{N}}$ is uniformly integrable.
\end{corollary}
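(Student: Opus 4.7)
The plan is to exploit the BSDE identity evaluated between $0$ and $T^k$, namely
\[
Y^{k,i,N}_0 = \xi^{k,i,N} + L^{k,i,N}_\infty - Z^{k,i,N}\cdot X^{k,i,\circ}_\infty - U^{k,i,N}\star\widetilde{\mu}^{k,i,\natural}_\infty - M^{k,i,N}_\infty,
\]
so that the terminal martingale sum of \eqref{Corollary 5.7. ii} equals $\xi^{k,i,N} + L^{k,i,N}_\infty - Y^{k,i,N}_0$, and the analogous identity for the McKean--Vlasov limit (whose orthogonal part vanishes by the predictable representation property \ref{S3}). All five statements then reduce to controlling the drift $L^{k,i,N}$, after which I combine \cref{prop:uniform_Picard_MVSDE}, \cref{Unif. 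Prop. Chaos of the system}, \cref{Unif. Prop. Chaos}, \cref{lemma:convergence_tildeYkn} and \cref{McKean--Vlasov Stability}.

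For \eqref{Corollary 5.7. i}, I would bound the squared total variation by Cauchy--Schwarz and \ref{S7} as
\[
\big|\text{Var}[L^{k,i,N} - L^{k,i}]_\infty\big|^2 \leq A^k_\infty \int_0^{T^k} \frac{|\Delta f^k|^2}{(\alpha^k)^2}\, \ud C^k_s \leq \overline{A}\int_0^{T^k} \frac{|\Delta f^k|^2}{(\alpha^k)^2}\, \ud C^k_s,
\]
where $\Delta f^k$ abbreviates the difference of the two drivers. Invoking \ref{H4} together with \cref{lem:Gamma_is_Lipschitz} and the inequalities $r^k/(\alpha^k)^2 \leq (\alpha^k)^2$, $\vartheta^{k,\circ}/(\alpha^k)^2,\vartheta^{k,\natural}/(\alpha^k)^2 \leq 1$ and $\vartheta^{k,*}/(\alpha^k)^2 \leq (\alpha^k)^2$, the integrand is controlled by $|Y^{k,i,N}-Y^{k,i}|^2 + \|(Z^{k,i,N}-Z^{k,i})c^k\|^2/(\alpha^k)^2 + 2\tnorm{U^{k,i,N}-U^{k,i}}^2_{k,i,s}/(\alpha^k)^2 + W^2_{2,|\cdot|}(L^N(\textbf{Y}^{k,N}_s),\mathcal{L}(Y^{k,i}_s))$; splitting the Wasserstein term via the triangle inequality and \eqref{empiricalineq} isolates the full-system average $\tfrac{1}{N}\sum_j |Y^{k,j,N}_s - Y^{k,j}_s|^2$ plus $W^2_{2,|\cdot|}(L^N(\widetilde{\textbf{Y}}^{k,N}_s),\mathcal{L}(Y^{k,i}_s))$. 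The first batch vanishes uniformly in $k$ by \cref{Unif. Prop. Chaos,Unif. Prop. Chaos of the system}, the second by \cref{lemma:convergence_tildeYkn}, yielding \eqref{Corollary 5.7. i}.

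For \eqref{Corollary 5.7. 00} and \eqref{Corollary 5.7 0}, the key additional input is \ref{S9}.(i): writing $\Delta L^{k,i,N}_s = f^{k,i,N}_s\,\Delta C^k_s$ and using $\Delta C^k_s \leq \Phi^k/(\alpha^k_s)^2$ from \ref{H5} produces
\[
\mathbb{E}\big[\text{Tr}[L^{k,i,N} - L^{k,i}]_\infty\big] \leq \Phi^k\,\mathbb{E}\!\bigg[\int_0^{T^k}\!\frac{|\Delta f^k|^2}{(\alpha^k)^2}\,\ud C^k_s\bigg],
\]
with the integral on the right handled exactly as in the previous step, and an analogous bound $\mathbb{E}[\text{Tr}[L^{k,i,N}]_\infty] \leq \Phi^k\cdot C$ where $C$ is $(k,N)$-independent thanks to \cref{prop:uniform_Picard_MVSDE}, \ref{S6} and \ref{S7}. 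The identity $\Delta L^{k,i,N}_s (\Delta L^{k,i,N}_s)^\top - \Delta L^{k,i}_s (\Delta L^{k,i}_s)^\top = (\Delta L^{k,i,N}_s - \Delta L^{k,i}_s)(\Delta L^{k,i,N}_s)^\top + \Delta L^{k,i}_s (\Delta L^{k,i,N}_s - \Delta L^{k,i}_s)^\top$ together with Cauchy--Schwarz on the jump sums yields
\[
\big\|[L^{k,i,N}]_t - [L^{k,i}]_t\big\|_2 \leq \sqrt{2\,\text{Tr}[L^{k,i,N} - L^{k,i}]_\infty}\,\sqrt{\text{Tr}[L^{k,i,N}]_\infty + \text{Tr}[L^{k,i}]_\infty}
\]
uniformly in $t$, and \eqref{Corollary 5.7. 00} follows by a further Cauchy--Schwarz in expectation. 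For \eqref{Corollary 5.7 0}, the bound $\sup_N\mathbb{E}[\text{Tr}[L^{k,i,N}]_\infty]\leq \Phi^k\cdot C \to 0$ as $k\to\infty$ dominates $\rho_{\textup{J}_1}([L^{k,i,N}],0)$ via \eqref{skorokineq} and the monotonicity of trace quadratic variation, so the convergence is immediate.

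Finally, for \eqref{Corollary 5.7. ii} I recast both sides via the initial-time identity as $\xi + L_\infty - Y_0$ and combine three $\mathbb{L}^2$-convergences as $(k,N)\to(\infty,\infty)$: $\xi^{k,i,N} \to \xi^{\infty,i}$ from \ref{S4} and \ref{S5} (triangle inequality); $L^{k,i,N}_\infty \to L^{\infty,i}_\infty$ by patching \eqref{Corollary 5.7. i} with the $\mathbb{L}^2$-convergence of $L^{k,i}_\infty$ to $L^{\infty,i}_\infty$ extracted from \cref{McKean--Vlasov Stability} through the corresponding identity for the limiting McKean--Vlasov BSDEs; and $Y^{k,i,N}_0 \to Y^{\infty,i}_0$ by combining \cref{Unif. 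Prop. Chaos} with \cref{McKean--Vlasov Stability}. The uniform integrability of the squares then follows automatically from $\mathbb{L}^2$-convergence. The main obstacle I anticipate is preserving uniformity in $k$ when gluing the horizontal ($N\to\infty$) estimates from Step~(1) with the vertical ($k\to\infty$) stability of $L^{k,i}_\infty$, which requires a Moore--Osgood-type patching to obtain the joint $(k,N)\to(\infty,\infty)$ limit.
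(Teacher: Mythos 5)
Your proposal is correct and, for \eqref{Corollary 5.7. i}, \eqref{Corollary 5.7. 00} and \eqref{Corollary 5.7. ii}, follows essentially the same route as the paper: the total variation of $L^{k,i,N}-L^{k,i}$ is bounded via Cauchy--Schwarz, \ref{H4} and \ref{S7} by the solution-difference norms plus the Wasserstein integral, which is then split by the triangle inequality and \eqref{empiricalineq} into the full-system average (handled by \cref{Unif. Prop. Chaos of the system}) and the i.i.d. empirical-measure error (handled by \cref{lemma:convergence_tildeYkn}); the quadratic-variation difference is controlled by $\textup{Tr}\big[[L^{k,i,N}-L^{k,i}]_\infty\big]\le\big|\textup{Var}[L^{k,i,N}-L^{k,i}]_\infty\big|^2$ together with an $\mathbb{L}^1$-bound on $[L^{k,i}]_\infty$ (the paper uses the polarization identity $[L^{k,i,N}]-[L^{k,i}]=[L^{k,i,N}-L^{k,i}]+2[L^{k,i,N}-L^{k,i},L^{k,i}]$ and Kunita--Watanabe where you use the jump-product identity and Cauchy--Schwarz on the jump sums — these are interchangeable); and \eqref{Corollary 5.7. ii} is obtained by Moore--Osgood, gluing the $k\to\infty$ convergence of the McKean--Vlasov martingale parts (which is exactly your $\xi+L_\infty-Y_0$ decomposition at the level of \cref{lem:MVBSDE_Picard_Induction} and \cref{prop:uniform_Picard_MVSDE}) with \cref{Unif. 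Prop. Chaos}. The one place where you genuinely diverge is \eqref{Corollary 5.7 0}: the paper deduces it from \eqref{Corollary 5.7. 00} combined with $[L^{k,i}]\xrightarrow{(\textup{J}_1,\mathbb{L}^1)}0$, the latter obtained by rerunning the doubly-indexed Picard argument of \cite[Section 3.3.2]{papapantoleon2023stability} on $[L^{k,i,(q)}]$, whereas you exploit $\Delta C^k_s\le\Phi^k/(\alpha^k_s)^2$ from \ref{H5} to get the direct bound $\mathbb{E}\big[\textup{Tr}[L^{k,i,N}]_\infty\big]\le\Phi^k\,\mathbb{E}[G^{k,i,N}]\le\Phi^k\cdot C$ with $C$ uniform in $(k,N)$, so that \ref{S9}.\ref{S9.i} kills the whole quadratic variation at once; this is more elementary and bypasses the Moore--Osgood step for that convergence, at the (mild) cost of having to verify $\sup_{k,N}\mathbb{E}[G^{k,i,N}]<\infty$, which indeed follows from the uniform \emph{a priori} and propagation-of-chaos estimates. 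Two cosmetic slips in your Lipschitz bound — the $Y$-term should carry the weight $(\alpha^k)^2$ and the $Z$- and $U$-terms should not be divided by $(\alpha^k)^2$, since $\vartheta^{k,\circ}/(\alpha^k)^2,\vartheta^{k,\natural}/(\alpha^k)^2\le 1$ — do not affect the argument, as the resulting integrals are exactly the $\mathbb{H}^2$-norms appearing in \cref{Unif. Prop. Chaos}.
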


\begin{proof} 
In order to prove \eqref{Corollary 5.7. i}, we start by reminding the facts that 
\begin{align*}
 \frac{r^k}{(\alpha^k)^2}\leq (\alpha^k)^2, \quad
 \vartheta^{k,\circ} \leq (\alpha^k)^2, \quad 
 \vartheta^{k,\natural} \leq (\alpha^k)^2 
    \quad \text{ and }\quad 
 \frac{\vartheta^{k,*}}{(\alpha^k)^2} \leq (\alpha^k)^2.
\end{align*}
Define for every $s \in \mathbb{R}_+$
\begin{align*}
  \delta f^{k,N}_s:= f^k\left(s,Y^{k,i,N}_s,Z^{k,i,N}_s c^{k}_s,\Gamma^{k,i}(U^{k,i,N})_s,L^N(\textbf{Y}^{k,N}_s)\right) - f^k\left(s,Y^{k,i}_s,Z^{k,i}_s c^{k}_s,\Gamma^{k,i}(U^{k,i})_s,\mathcal{L}(Y^{k,i}_s)\right).
\end{align*}
Using the Cauchy--Schwarz inequality, \ref{H4} and \ref{S7}, we get for every $k \in \mathbb{N}$ that
\begin{align*}
    &\mathbb{E}\Big[\Big|\textrm{Var}\big[ L^{k,i,N}_{\cdot} - L^{k,i}_{\cdot}\big]\Big|^2\Big] 
    = \mathbb{E}\Big[\Big|\textup{Var}\Big[\int_{0}^{\cdot}\delta f^{k,N}_s \ud C^k_s\Big]\Big|^2\Big] 
    \leq \overline{A}\hspace{0.1cm}\mathbb{E}\left[\int_{0}^{\infty}\mathcal{E}\big(\hat{\beta} A^{k}\big)_{s-}\frac{|\delta f^{k,N}_s|^2}{(a^k_s)^2} \ud C^k_s\right]\\
    &\begin{multlined}[0.9\textwidth]
    \hspace{1em}\leq \overline{A}\hspace{0.1cm}\Bigg( \|\alpha^k (Y^{k,i,N} - Y^{k,i}) \|^2_{\mathbb{H}_{\hat{\beta}}^{2,k,(1,\dots,N)}(\mathbb{R}^d)} 
        + \|Z^{k,i,N} - Z^{k,i}\|^2_{\mathbb{H}^{2,k,(1,\dots,N),\circ}_{\hat{\beta}}(\mathbb{R}^{d \times p})}\\ + \|U^{k,i,N} - U^{k,i}\|^2_{\mathbb{H}^{2,k,(1,\dots,N),\natural}_{\hat{\beta}}(\mathbb{R}^d)} + \frac{1}{\hat{\beta}}\hspace{0.1cm}\mathbb{E}\left[\int_{0}^{\infty}W^2_{2,|\cdot|}(L^N(\textbf{Y}^{k,N}_s),\mathcal{L}(Y^{k,i}_s))\ud 
    \mathcal{E}\big(\hat{\beta} A^{k}\big)_{s}\right]\Bigg).
    \end{multlined}
\end{align*}
In view of \cref{Unif. Prop. Chaos of the system} and \ref{Unif. Prop. Chaos}, we only need to prove the convergence of the last summand on the right-hand side of the last inequality.
Using the triangle inequality for the Wasserstein distance and Tonelli's theorem, we have
\begin{multline*}
    \mathbb{E}\left[\int_{0}^{\infty}W^2_{2,|\cdot|}(L^N(\textbf{Y}^{k,N}_s),\mathcal{L}(Y^{k,i}_s))\ud 
    \mathcal{E}\big(\hat{\beta} A^{k}\big)_{s} \right]\\ 
    \leq 2\mathbb{E}\left[\int_{0}^{\infty}W^2_{2,|\cdot|}(L^N(\textbf{Y}^{k,N}_s),L^N(\widetilde{\textup{\textbf{Y}}}^{k,N}_s))
    \ud \mathcal{E}\big(\hat{\beta} A^{k}\big)_{s}\right]\\ 
    + 2 \int_{0}^{\infty}
    \mathbb{E}\Big[
    W_{2,|\cdot|}^2\Big(
    L^N\big({\widetilde{\textup{\textbf{Y}}}}^{k,N}_s\big),
    \mathcal{L}\big( Y^{k,1}_s\big)
    \Big)\Big] \,\ud 
    \mathcal{E}\big(\hat{\beta} A^{k}\big)_{s}\footnotemark.
\end{multline*}
\footnotetext{Note that $ \mathcal{L}\big( Y^{k,1}_s\big) = \mathcal{L}\big( Y^{k,i}_s\big)$ for every $s \in \mathbb{R}_+$ and $k \in \overline{\mathbb{N}},i \in \mathbb{N}$.}%
On the right-hand side of the above inequality, the convergence of the first summand follows from \cref{Unif. Prop. Chaos of the system} in conjunction with \ref{S7}, once we observe that by \eqref{empiricalineq} holds
\begin{gather*}
    W^2_{2,|\cdot|}\Big(L^N\big(\textbf{Y}^{k,N}_s\big),L^N\big(\widetilde{\textup{\textbf{Y}}}^{k,N}_s\big)\Big) 
        \leq \frac{1}{N}\sum_{i = 1}^{N}\big|Y^{k,i,N}_s - Y^{k,i}_s \big|^2.
\end{gather*}
The convergence of the second summand follows from \cref{lemma:convergence_tildeYkn} in conjunction with \ref{S7}.

In order to show \eqref{Corollary 5.7. 00}, we argue as follows. 
Let $j\in\{1,\dots,d\}$, then we will adjoin the upper index $[j]$ in order to denote the $j-$th element of a $d-$dimensional vector.
\begin{align*}
    \textup{Tr}\big[ [L^{k,i,N} - L^{k,i}]_{\infty}\big]
    &= \sum_{j=1}^d \sum_{t\ge 0}\big(\Delta (L^{k,i,N,[j]} - L^{k,i,[j]})\big)^2
    \leq \sum_{j=1}^d \Big(\sum_{t\ge 0}\Delta |L^{k,i,N,[j]} - L^{k,i,[j]}|\Big)^2\\
    &\leq \big| \textup{Var}[L^{k,i,N} - L^{k,i}]_{\infty}\big|^2.
\end{align*}
Now, the pathwise identity
\begin{align*}
    [L^{k,i,N}] - [L^{k,i}] = [L^{k,i,N} - L^{k,i}] + 2 [L^{k,i,N} - L^{k,i}, L^{k,i}]
\end{align*}
implies for every $t\in\mathbb{R}_+$ (we understand the following elementwise)
\begin{align*}
    \textup{Var}\big([L^{k,i,N}]_t - [L^{k,i}]_t\big) 
    &= \textup{Var}\big([L^{k,i,N} - L^{k,i}]_t + 2 [L^{k,i,N} - L^{k,i}, L^{k,i}]_t\big)\\
    &\leq
    \textup{Var}\big([L^{k,i,N} - L^{k,i}]_{\infty}\big) + 2 \textup{Var}\big([L^{k,i,N} - L^{k,i}, L^{k,i}]_{\infty}\big).
\end{align*}
Then, the desired convergence \eqref{Corollary 5.7. 00} is derived by the convergence \eqref{Corollary 5.7. i}, the Kunita--Watanabe inequality, see \citet[Theorem 6.33]{he2019semimartingale}, and the $\mathbb{L}^1-$boundedness of $\{ [L^{k,i}]_{\infty} \}_{k\in\overline{\mathbb{N}}}$.

The convergence \eqref{Corollary 5.7 0} is immediate from the convergence \eqref{Corollary 5.7. 00} if the next limit holds 
\begin{align*}
    [L^{k,i}] \xrightarrow[k \rightarrow \infty]{\left(\textup{J}_1(\mathbb{R}^{d \times d}),  \mathbb{L}^1\right)} 0.
\end{align*}
However, this follows by using for the McKean--Vlasov case the exact same reasoning as in the analogous results of \citet[Section 3.3.2]{papapantoleon2023stability}, \emph{i.e.},
\begin{align*}
[L^{k,i,(q)}] \xrightarrow[q \rightarrow \infty]{\left(\|\cdot\|_{\infty},  \mathbb{L}^1\right)} [L^{k,i}]
 \hspace{0.5cm}\text{and}\hspace{0.5cm}  [L^{k,i,(q)}] \xrightarrow[(k,q) \rightarrow (\infty,\infty)]{\left(\textup{J}_1(\mathbb{R}^{d \times d}),  \mathbb{L}^1\right)} 0. 
\end{align*}
Finally, we want to prove convergence \eqref{Corollary 5.7. ii}.
To this end, we have from \ref{S5}, \cref{lem:MVBSDE_Picard_Induction}.\ref{lem:MVBSDE_Picard_Induction i.} and \cref{prop:uniform_Picard_MVSDE} by an application of the Moore--Osgood theorem that
\begin{align}\label{eq 5.5}
    Z^{k,i} \cdot X^{k,i,\circ}_{\infty} + U^{k,i} \star \widetilde{\mu}^{{k,i,\natural}}_{\infty} + M^{k,i}_{\infty} \xrightarrow[k\rightarrow \infty]{\mathbb{L}^2(\Omega,\mathcal{G},\mathbb{P},\mathbb{R}^d)} Z^{\infty,i} \cdot X^{\infty,i,\circ}_{\infty} + U^{\infty,i} \star \widetilde{\mu}^{{\infty,i,\natural}}_{\infty}.
\end{align}
Applying again the Moore--Osgood theorem,  we conclude from \eqref{eq 5.5} and \cref{Unif. Prop. Chaos}.
The desired uniform integrability property is immediate from Vitalli's theorem, see \citet[Theorem 4.5.4]{bogachev2007measure} which can be adapted for the $\mathbb{L}^2-$case.
\end{proof}





\subsection{Auxiliary results for Theorem \ref{Mean-field system stability}: Stability of mean-field BSDE systems}\label{subsec:aux_mfBSDE_Stability}

In this subsection, the framework of \cref{Mean-field system stability} is adopted. 
We present below, for the convenience of the reader, a table that outlines the method of proof for the stability of mean-field BSDEs with $N$ players. 
The approach is exactly the same as in the case of the stability of McKean--Vlasov BSDEs, see \cref{table:McKean--Vlasovscheme} and the discussion thereafter, therefore we avoid the repetition by omitting the respective description.

\begin{table}[ht]
{\centering 
\begin{tabular}{c|ccccccc}
$\mathscr{D}^1$ 	&{$\textbf{S}^{1,N,(0)}$} 			&{$\textbf{S}^{1,N,(1)}$} 			&{$\textbf{S}^{1,N,(2)}$}		
 					&{$\cdots$}							&{$\textbf{S}^{1,N,(q)}$}			&{$\xrightarrow{\ q\to\infty\ }$} 	&{$\textbf{S}^{1,N}$}\\[0.3cm]
$\mathscr{D}^2$ 	&{$\textbf{S}^{2,N,(0)}$} 			&{$\textbf{S}^{2,N,(1)}$} 			&{$\textbf{S}^{2,N,(2)}$}		
 					&{$\cdots$}							&{$\textbf{S}^{2,N,(q)}$}			&{$\xrightarrow{\ q\to\infty\ }$} 	&{$\textbf{S}^{2,N}$}\\[0.3cm]
$\mathscr{D}^3$ 	&{$\textbf{S}^{3,N,(0)}$} 			&{$\textbf{S}^{3,N,(1)}$} 			&{$\textbf{S}^{3,N,(2)}$}		
 					&{$\cdots$}							&{$\textbf{S}^{3,N,(q)}$}			&{$\xrightarrow{\ q\to\infty\ }$} 	&{$\textbf{S}^{3,N}$}\\[0.1cm]
$\vdots$			&{$\vdots$} 						&{$\vdots$} 						&{$\vdots$} 						& 
					&{$\vdots$}							& 												&{$\vdots$} \\[0.2cm]
$\mathscr{D}^k$ 	&{$\textbf{S}^{k,N,(0)}$} 			&{$\textbf{S}^{k,N,(1)}$} 			&{$\textbf{S}^{k,N,(2)}$}			&{$\cdots$}
					&{$\textbf{S}^{k,N,(q)}$}			&{$\xrightarrow{\ q\to\infty\ }$}	&{$\textbf{S}^{k,N}$}\\
$\big\downarrow$	
&\rotatebox[origin=c]{270}{$\dashrightarrow{}$}	&\rotatebox[origin=c]{270}{$\dashrightarrow{}$}  	&\rotatebox[origin=c]{270}{$\dashrightarrow{}$}  	&  		
					& \rotatebox[origin=c]{270}{$\dashrightarrow{}$} 	&									& \rotatebox[origin=c]{270}{\Large${\rightsquigarrow}$} \\[0.2cm]
$\mathscr{D}^\infty$&{$\textbf{S}^{\infty,N,(0)}$} 		&{$\textbf{S}^{\infty,N,(1)}$} 		&{$\textbf{S}^{\infty,N,(2)}$}		&{$\cdots$}
					&{$\textbf{S}^{\infty,N,(q)}$}		&{$\xrightarrow{\ q\to\infty\ }$}	&{$\textbf{S}^{\infty,N}$}\\[0.2cm]
\end{tabular}\\}
\caption{The doubly-indexed Picard scheme for the stability of mean-field systems of BSDEs.}
\label{table:mean-fieldscheme}
\end{table}

\begin{proposition}[Uniform \emph{a priori} mean-field BSDE system estimates]
Let $k \in \overline{\mathbb{N}}$ and $N \in \mathbb{N}$.
Then, we associate to the standard data $\mathscr{D}^k$ the sequence of Picard iterations $\{\textbf{S}^{k,N,(q)}\}_{q \in \mathbb{N}\cup\{0\}}$, where $\textbf{S}^{k,N,(0)}$ is the zero element of 
\begin{align*}
    \prod_{i = 1}^{N}\mathscr{S}^{2,k,(1,\dots,N)}_{\hat{\beta}} 
\times \prod_{i = 1}^{N}\mathbb{H}^{2,k,(1,\dots,N),\circ}_{\hat{\beta}} 
\times \prod_{i = 1}^{N}\mathbb{H}^{2,k,(1,\dots,N),\natural}_{\hat{\beta}} 
\times \prod_{i = 1}^{N}\mathcal{H}^{2,k,(1,\dots,N),\perp}_{\hat{\beta}}.
\end{align*} 
There exists $k_{\star,0}$ such that 
\begin{align*}
    \lim_{q \rightarrow \infty} \sup_{k \geq k_{\star,0}}
    \big\|\textbf{S}^{k,N,(q)} -  \textbf{S}^{k,N}\big\|^{2}_{\star,k,(1,\dots,N),\hat{\beta}} = 0.
\end{align*}
Additionally, we also have $\sup_{k \geq k_{\star,0}}\big\|  \textbf{S}^{k,N}\|^{2}_{\star,k,(1,\dots,N),\hat{\beta}} < \infty$.
\end{proposition}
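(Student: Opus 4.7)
The strategy mirrors the one used in \cref{prop:uniform_Picard_MVSDE}, only now applied to the product-space Picard scheme for the $N$-player mean-field system. First, by \ref{S9}.\ref{S9.ii} we can select $k_{\star,0}\in\mathbb{N}$, independent of $N$, such that $4\widetilde{M}^{\Phi^k}(\hat{\beta})<1$ for every $k\ge k_{\star,0}$; this gives us the slack needed for a summable geometric bound. The proof then reduces to two ingredients: a contraction estimate between consecutive Picard iterates of the system, and a uniform-in-$k$ bound for the first Picard iterate.

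For the contraction estimate, one invokes the \emph{a priori} bound for the mean-field BSDE system from \citet{PST_BackPropagChaos} (the analogue used in the proof of \cite[Theorem~4.13]{PST_BackPropagChaos}), which yields, for every $q\in\mathbb{N}$ and $k\in\overline{\mathbb{N}}$,
\begin{align*}
\big\|\textbf{S}^{k,N,(q+1)} - \textbf{S}^{k,N,(q)}\big\|^{2}_{\star,k,(1,\dots,N),\hat{\beta}}
\le 2\widetilde{M}^{\Phi^k}(\hat{\beta})\,\big\|\textbf{S}^{k,N,(q)} - \textbf{S}^{k,N,(q-1)}\big\|^{2}_{\star,k,(1,\dots,N),\hat{\beta}}.
\end{align*}
Iterating and telescoping exactly as in \cref{prop:uniform_Picard_MVSDE} gives
\begin{align*}
\big\|\textbf{S}^{k,N,(q)} - \textbf{S}^{k,N}\big\|^{2}_{\star,k,(1,\dots,N),\hat{\beta}}
\le \frac{2\bigl(2\widetilde{M}^{\Phi^k}(\hat{\beta})\bigr)^{q}}{1-4\widetilde{M}^{\Phi^k}(\hat{\beta})}\,
\big\|\textbf{S}^{k,N,(1)}\big\|^{2}_{\star,k,(1,\dots,N),\hat{\beta}},
\end{align*}
so the claim reduces to showing $\sup_{k\ge k_{\star,0}}\|\textbf{S}^{k,N,(1)}\|^{2}_{\star,k,(1,\dots,N),\hat{\beta}}<\infty$, from which both the convergence and the uniform boundedness of $\textbf{S}^{k,N}$ follow (the latter via the triangle inequality $\|\textbf{S}^{k,N}\|\le \|\textbf{S}^{k,N,(1)}\|+\|\textbf{S}^{k,N,(1)}-\textbf{S}^{k,N}\|$).

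The bound on the first iterate is obtained by applying the system version of \cite[Lemma~3.1]{PST_BackPropagChaos} with the zero input and then summing over $i\in\mathscr{N}$, which gives
\begin{align*}
\big\|\textbf{S}^{k,N,(1)}\big\|^{2}_{\star,k,(1,\dots,N),\hat{\beta}}
\le C(\hat{\beta},\Phi^k)\sum_{i=1}^{N}\Big(
\big\|\xi^{k,i,N}\big\|^{2}_{\mathbb{L}^{2,k,(1,\dots,N)}_{\hat{\beta}}(\mathbb{R}^d)}
+\Big\|\tfrac{f^k(\cdot,0,0,0,\delta_0)}{\alpha^k_\cdot}\Big\|^{2}_{\mathbb{H}^{2,k,(1,\dots,N)}_{\hat{\beta}}(\mathbb{R}^d)}\Big),
\end{align*}
where the constant $C(\hat{\beta},\Phi^k)$ is the explicit prefactor appearing in \cite[Lemma~3.1]{PST_BackPropagChaos}. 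By \ref{S7} one bounds the $\hat{\beta}$-weighted norms by their unweighted counterparts times $\ue^{\hat{\beta}\overline{A}}$; by \ref{S9}.\ref{S9.i} the constant $C(\hat{\beta},\Phi^k)$ is bounded uniformly in $k$; and the two resulting $\mathbb{L}^2$-type terms are controlled uniformly in $k$ by \ref{S4} (combined with \ref{S5'} for the $\xi^{k,i,N}$) and \ref{S6} for the generator at zero. This yields the required uniform finiteness and completes the proof.

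The main obstacle, as in the McKean--Vlasov case, is the uniform control in $k$ of the first Picard iterate: it is precisely there that the combined assumptions \ref{S4}, \ref{S5'}, \ref{S6}, \ref{S7} and \ref{S9} are used essentially, since the contraction argument alone is insensitive to the driving data.
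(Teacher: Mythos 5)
Your proposal is correct and follows essentially the same route as the paper, whose own proof simply states that one repeats, \emph{mutatis mutandis}, the argument of \cref{prop:uniform_Picard_MVSDE}: choose $k_{\star,0}$ via \ref{S9}, telescope the Picard contraction into a geometric bound, and control the first iterate uniformly in $k$ through the \emph{a priori} estimate of \cite[Lemma~3.1]{PST_BackPropagChaos} together with \ref{S4}, \ref{S5'}, \ref{S6} and \ref{S7}. Your version is in fact more explicit than the paper's, which leaves all of these steps to the reader.
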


\begin{proof}
As the reader can confirm, one can follow, \emph{mutatis mutandis}, the same arguments as in the proof of \cref{prop:uniform_Picard_MVSDE}, which deals with the stability of McKean--Vlasov BSDEs.
\end{proof}

In order to prove \cref{Mean-field system stability}, we will follow the same strategy as in the proof of \cref{McKean--Vlasov Stability}, but working now for the $N-$player system of BSDEs.
Hence, we need to take care of the appropriate modifications. 

\begin{notation}\label{Definition 5.9}
For every $i \in \mathscr{N}$, $t \in \mathbb{R}_+\cup\{\infty\}$, $k \in \overline{\mathbb{N}}$ and $q \in \mathbb{N}\cup \{0\}$ we define:
    \begin{itemize}
        \item $\displaystyle L^{k,i,N,(q)}_t := \int^{t}_{0}f^k\big(s,Y^{k,i,N,(q)}_s,Z^{k,i,N,(q)}_s c^{k}_s,\Gamma^{k,(1,\dots,N)}(U^{k,i,N,(q)})_s,L^N(\textbf{Y}^{k,N,(q)}_s)\big) \, \ud C^{k}_s$,
\vspace{0.5cm}
        \item $\displaystyle L^{k,i,N}_t := \int^{t}_{0}f^k\big(s,Y^{k,i,N}_s,Z^{k,i,N}_s c^{k}_s,\Gamma^{k,(1,\dots,N)}(U^{k,i,N})_s,L^N(\textbf{Y}^{k,N}_s)\big) \, \ud C^{k}_s$, 
        and
        \vspace{0.5cm}
        \item $\displaystyle G^{k,i,N,(q)} := \int^{\infty}_{0}\frac{\Big|f^k\big(s,Y^{k,i,N,(q)}_s,Z^{k,i,N,(q)}_s c^{k}_s,\Gamma^{k,(1,\dots,N)}(U^{k,i,N,(q)})_s,L^N(\textbf{Y}^{k,N,(q)}_s)\big)\Big|^2}{(\alpha^k)^2} \, \ud C^{k}_s$.
    \end{itemize}
\end{notation}

Then, as before, using \citet[Sections 3.3.1, 3.3.2 and 3.3.3]{papapantoleon2023stability} we conclude that the convergences of \cref{Mean-field system stability} are equivalent to the next result.

\begin{lemma}\label{lem:MVBSDE_Picard_Induction 2}
Let $i\in \mathscr{N}$ and $q \in \mathbb{N}\cup\{0\}$, then we have
\begin{enumerate}
    \item $L^{k,i,N,(q)}_{\infty} \xrightarrow[k \rightarrow \infty]{\mathbb{L}^2(\Omega,\mathcal{G},\mathbb{P},\mathbb{R}^d)} L^{\infty,i,N,(q)}_{\infty}$, 
    \vspace{0.5cm}
    \item $L^{k,i,N,(q)}_{\cdot} \xrightarrow[k \rightarrow \infty]{\left(\textup{J}_1(\mathbb{R}^d),\mathbb{L}^2\right)} L^{\infty,i,N,(q)}_{\cdot}$,
\vspace{0.5cm}
    \item\label{lem:MVBSDE_Picard_Induction iii. 2} the sequence $\left\{G^{k,i,N,(q)}\right\}_{k \in \overline{\mathbb{N}}}$ is uniformly integrable.
\end{enumerate}
\end{lemma}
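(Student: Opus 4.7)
The plan is to prove (i)--(iii) jointly by induction on $q$, following the blueprint of \cref{lem:MVBSDE_Picard_Induction} with three systematic substitutions: the individual filtration $\mathbb{F}^{k,i}$ is replaced by the aggregated filtration $\mathbb{F}^{k,(1,\dots,N)}$, the law $\mathcal{L}(Y^{k,i,(q-1)}_s)$ is replaced by the empirical measure $L^N(\textbf{Y}^{k,N,(q-1)}_s)$, and assumptions \ref{S5} and \ref{S8} are replaced by \ref{S5'} and \ref{S8'}. For the base case $q=0$, since $\textbf{S}^{k,N,(0)}$ is the zero element, we have $L^N(\textbf{Y}^{k,N,(0)}_s)=\delta_0$ for every $s\in\mathbb{R}_+$, and hence
\[
    L^{k,i,N,(0)}_\cdot = \int_0^\cdot f^k(s,0,0,0,\delta_0)\, \ud C^k_s,
\]
so that the three claims at level $0$ reduce literally to the corresponding computation in the base case of \cref{lem:MVBSDE_Picard_Induction} and follow from \ref{S6} and \ref{S7}.

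For the inductive step, assume (i)--(iii) hold at level $q-1$. Arguing as in Remark~4.4, one has both the weak convergence $\mathbb{F}^{k,(1,\dots,N)}\xrightarrow[k\to\infty]{\textup{w}}\mathbb{F}^{\infty,(1,\dots,N)}$ and the $(\textup{J}_1,\mathbb{L}^2)$-convergence of the aggregated driver $\overline{X}^{k,(1,\dots,N)}$; these are precisely the inputs required by the stability-of-martingale-representation results of \citet{papapantoleon2019stability}. Splitting each $Y^{k,j,N,(q-1)}$, $j\in\mathscr{N}$, into its martingale and finite-variation parts and applying those stability results together with the inductive hypothesis, I would derive the joint $(\textup{J}_1(\mathbb{R}^{d\times N}),\mathbb{L}^2)$-convergence $\textbf{Y}^{k,N,(q-1)}\to\textbf{Y}^{\infty,N,(q-1)}$. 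Passing to a subsequence gives $\mathbb{P}$-a.s.\ $\textup{J}_1$-convergence, and the continuity of the map $\textbf{y}\mapsto L^N(\textbf{y})$ from $(\mathbb{R}^{d\times N},|\cdot|)$ to $(\mathscr{P}_2(\mathbb{R}^d),W_{2,|\cdot|})$ transports this to $\mathbb{P}$-a.s.\ $\textup{J}_1$-convergence of the empirical measures $L^N(\textbf{Y}^{k,N,(q-1)})$. Assumption \ref{S8'} then delivers the pointwise $\textup{J}_1(\mathbb{R}^d)$-convergence, $\mathbb{P}$-a.s., of the full integrand defining $L^{k,i,N,(q)}$, after which the arguments of \cite[Sections 3.5.1--3.5.5]{papapantoleon2023stability} apply mutatis mutandis to deliver (i) and (ii).

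The main obstacle will be securing (iii) at level $q$. Starting from the Lipschitz estimate of \ref{H4} and proceeding as in the analogous computation in the proof of \cref{lem:MVBSDE_Picard_Induction}, one bounds $G^{k,i,N,(q)}$ by a sum of terms dominated respectively by $\int_0^\infty |Y^{k,j,N,(q)}_s|^2\, \ud A^k_s$ for $j\in\mathscr{N}$, by $\textup{Tr}\big[\big\langle Z^{k,i,N,(q)}\cdot X^{k,i,\circ} + U^{k,i,N,(q)}\star\widetilde{\mu}^{k,i,\natural}\big\rangle_\infty\big]$, by the empirical-measure contribution absorbed through the sharp bound \eqref{empiricalineq}, and by the free term $|f^k(s,0,0,0,\delta_0)|^2/(\alpha^k_s)^2$. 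Each of these is $\mathbb{L}^1$-controlled uniformly in $k$ by combining the uniform $\mathbb{L}^2$-bound coming from the mean-field system \emph{a priori} estimate proved earlier in this subsection with the de~La~Vall\'ee-Poussin-type argument encoded in \cite[Lemma A.17]{papapantoleon2023stability}; the free term is handled directly via \ref{S6} and \ref{S7}. Since $N$ is fixed throughout, all constants arising in these estimates depend on $N$ but not on $k$, which is exactly what is required for the Moore--Osgood argument underlying \cref{Mean-field system stability}.
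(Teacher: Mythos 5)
Your proposal is correct and follows essentially the same route as the paper: the paper likewise proves the lemma by induction on $q$, arguing that the proof of \cref{lem:MVBSDE_Picard_Induction} carries over verbatim once $\mathbb{F}^{k,i}$ is replaced by $\mathbb{F}^{k,(1,\dots,N)}$, the law by the empirical measure (controlled via \eqref{empiricalineq}), and \ref{S5}, \ref{S8} by \ref{S5'}, \ref{S8'}, treating all players $i\in\mathscr{N}$ simultaneously at each step; the explicit bound on $G^{k,i,N,(1)}$ that the paper displays is exactly the decomposition you describe for part (iii).
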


The above result is proved by induction, with exactly the same arguments as \cref{lem:MVBSDE_Picard_Induction}, but note that in each step of the induction we treat all $i \in \mathscr{N}$ simultaneously. 
As an example, we will provide the computations needed in the proof of \cite[Lemma 3.12.]{papapantoleon2023stability}, as we did at \cref{lem:MVBSDE_Picard_Induction} above. 
Hence, for $i \in \mathscr{N}$ and $k \in \overline{\mathbb{N}}$, we have $\mathbb{P}-a.s.$
\begin{align*}
    G^{k,i,N,(1)} 
    &= \int^{\infty}_{0}\frac{\Big|f^k\big(s,Y^{k,i,N,(1)}_s,Z^{k,i,N,(1)}_s c^{k}_s,\Gamma^{k,(1,\dots,N)}(U^{k,i,N,(1)})_s,L^N(\textbf{Y}^{k,N,(1)}_s)\big)\Big|^2}{(\alpha^k)^2} \, \ud C^{k}_s\\
    &\hspace{-2em}\begin{multlined}[0.95\textwidth]
        \leq \int^{\infty}_{0}(a^k_s)^2|Y^{k,i,N,(1)}_s|^2 + \|Z^{k,i,N,(1)}_s c^{k}_s\|^2 + 2 \Big(\tnorm{U^{k,i,N,(1)}_s(\cdot)}_s^{(\mathbb{F}^{k,(1,\dots,N)},\overline{X}^{k,i})}\Big)^2\, \ud C^{k}_s\\ +\int^{\infty}_{0} (a^k_s)^2 \frac{1}{N}\sum_{m =1}^{N}|Y^{k,m,N,(1)}_s|^2
    +\frac{\left|f^k\left(s,0,0,0,\delta_0\right)\right|^2}{(\alpha^k)^2} \, \ud C^{k}_s
    \end{multlined}\\
    &\hspace{-2em}\begin{multlined}[0.95\textwidth]
    \leq 2 \int^{\infty}_{0}|Y^{k,i,N,(1)}_s|^2 + \frac{1}{N}\sum_{m =1}^{N}|Y^{k,m,N,(1)}_s|^2\, \ud A^{k}_s  \\
    +2 \int^{\infty}_{0}\, \ud \text{Tr}\left[\left\langle Z^{k,i,N,(1)}\cdot X^{k,i,\circ} + U^{k,i,N,(1)}\star \widetilde{\mu}^{{k,i,\natural}} \right\rangle_s\right]
    + 2G^{k,i,N,(0)}
    \end{multlined}\\
    &\hspace{-2em}\begin{multlined}[0.95\textwidth]
        \leq 4 \overline{A}\sup_{s \in \mathbb{R}_+}\left\{\left|\mathbb{E}\left[\xi^{k,i,N}\Big|\mathcal{F}^{k,(1,\dots,N)}_s\right]\right|^2\right\} + \frac{4 \overline{A}}{N}\sum_{m = 1}^{N} \sup_{s \in \mathbb{R}_+}\left\{\left|\mathbb{E}\left[\xi^{k,m,N}\Big|\mathcal{F}^{k,(1,\dots,N)}_s\right]\right|^2\right\} \\ 
    + 8 \overline{A}\sup_{s\in\mathbb{R}_+}\left\{\left|\mathbb{E}\left[\int_{s}^{\infty}f^k(t,0,0,0,\delta_0) \ud C^{k}_t\Bigg|\mathcal{F}^{k,(1,\dots,N)}_s\right]\right|^2\right\} + 2\hspace{0.1cm}\text{Tr}\left[\langle \widetilde{M}^{k,i,N,(0)} \rangle_{\infty}\right] 
    + 2\hspace{0.1cm} G^{k,i,N,(0)},
    \end{multlined}
\end{align*}
with $\widetilde{M}^{k,i,N,(0)}_{\cdot}$ defined to be the martingale
\begin{align*}
    Y^{k,i,N,(1)}_{\cdot} + \int_{0}^{\cdot}f^k\left(s,Y^{k,i,N,(0)}_s,Z^{k,i,N,(0)}_s c^{k}_s,\Gamma^{k,(1,\dots,N)}(U^{k,i,N,(0)})_s,L^N(\textbf{Y}^{k,N,(0)}_s)\right) \ud C^{k}_s.
\end{align*}
Hence, from the information provided, for every $i \in \mathscr{N}$, we can conclude as before in the McKean--Vlasov case.


\bibliographystyle{abbrvnat}
\bibliography{References}


\end{document}